\numberwithin{equation}{section}  
\newcommand{\m}[1]{\mathcal{#1}}
\newcommand{\bb}[1]{\mathbb{#1}}
\newcommand{\mrm}[1]{\mathrm{#1}}
\newcommand{\f}[1]{\mathfrak{#1}}
\newcommand{\diff}{\partial}
\newcommand{\bdiff}{\bar{\partial}} 
\newcommand{\I}{\mathrm{i}\mkern1mu}	
\DeclarePairedDelimiter\abs{\lvert}{\rvert}	
\DeclarePairedDelimiter\norm{\lVert}{\rVert}	
\DeclarePairedDelimiter{\set}{\{}{\}}	 
\newcommand{\tc}{\mathrel{}\mathclose{}\middle|\mathopen{}\mathrel{}}
\newcommand{\C}{\mathbb{C}}
\newcommand{\R}{\mathbb{R}}
\newcommand{\ii}{\operatorname{i}}
\newcommand{\opZ}{\operatorname{Z}}
\newcommand{\ch}{\operatorname{ch}}
\newcommand{\td}{\operatorname{Td}}
\newcommand{\dv}{\operatorname{div}}
\newcommand{\Rea}{\operatorname{Re}}
\newcommand{\Imm}{\operatorname{Im}}
\newcommand{\Ric}{\operatorname{Ric}}
\newcommand{\del}{\partial}
\newcommand{\delbar}{\bar{\partial}}
\newcommand{\PP}{\mathbb{P}}
\newtheorem{thm}{Theorem}[section]
\newtheorem{prop}[thm]{Proposition}
\newtheorem{lemma}[thm]{Lemma}
\newtheorem{cor}[thm]{Corollary}
\newtheorem{conj}[thm]{Conjecture}
\theoremstyle{definition}
\newtheorem{definition}[thm]{Definition}
\newtheorem{question}[]{Question}
\theoremstyle{remark}
\newtheorem{exm}[thm]{Example}
\newtheorem{rmk}[thm]{Remark}
\title{Special representatives of complexified K\"ahler classes}
\author{Carlo Scarpa and Jacopo Stoppa}
\date{ }
\begin{document}

\maketitle

\begin{abstract} Motivated by constructions appearing in mirror symmetry, we study special representatives of complexified K\"ahler classes, which extend the notions of constant scalar curvature and extremal representatives for usual K\"ahler classes. In particular, we provide a moment map interpretation, discuss a possible correspondence with compactified Landau-Ginzburg models, and prove existence results for such special complexified K\"ahler forms and their large volume limits in certain toric cases. 
\end{abstract}

\section{Motivation and results}

Let $X$ be a K\"ahler manifold of dimension $n$. We will always assume that $X$ is compact. Following the conventions of \cite{Sheridan_versality}, we define a \emph{complexified K\"ahler form} on $X$ as
\begin{equation*}
\omega^{\C} = \ii \omega + B \in \mathcal{A}^{1, 1}(X, \C),
\end{equation*}
where $\omega$ is a K\"ahler form and $B$ (the \emph{B-field}) is a real closed form. The corresponding class
\begin{equation*}
[\omega^{\C}] = [\ii \omega + B] \in H^{1, 1}(X, \C)
\end{equation*}
is called a \emph{complexified K\"ahler class}.

In this paper we introduce and study a \emph{complex} partial differential equation which attempts to fix a special representative for a complexified K\"ahler class $[\omega^{\C}]$. 
\begin{definition}
The \emph{constant scalar curvature K\"ahler (cscK) equation with $B$-field} is the complex PDE 
\begin{equation}\label{modelEqIntro}
s(\omega) + \gamma \frac{(\omega^{\C})^n}{\omega^n} = c,
\end{equation}
where $s(\omega) = \Lambda_{\omega} \Ric(\omega)$ denotes the scalar curvature, $\gamma \in \C$ is the \emph{complex coupling constant}, and $c \in \R$ is a \emph{real} topological constant, uniquely determined by $[\omega^{\C}]$ and $\gamma$.

More generally, let $\f{h}_0(\omega)$ denote the Lie algebra of real functions $\xi$ which are holomorphy potentials with respect to $\omega$, i.e. such that $\nabla^{1,0}_{\omega}\xi$ is a holomorphic vector field. Then, the \emph{extremal scalar curvature equation with $B$-field} is the complex PDE
\begin{equation}\label{extrEqIntro}
s(\omega) + \gamma \frac{(\omega^{\C})^n}{\omega^n} = \xi \in \f{h}_0(\omega).
\end{equation}
\end{definition}
These notions make sense for any K\"ahler manifold with a complexified polarisation, i.e. a pair $(X, [\omega^{\C}])$, and reduce to the cscK and K\"ahler extremal scalar curvature equations when the $B$-field vanishes. In this Section we motivate and discuss these equations in detail and state our main  results (see \ref{ResultsSec}), as well as several other useful properties.

\subsection{Mirror symmetry}\label{MirrorSec}
Pairs $(X, [\omega^{\C}])$ appear in the theory of mirror symmetry. When $X$ is Calabi-Yau, the open string $A$-model of $X$ is defined as (a suitable enhancement of the derived category of) the Fukaya $A_{\infty}$ category $\operatorname{Fuk}(X, \omega^{\C})$. The $B$-field enters crucially in the definition of $\operatorname{Fuk}(X, \omega^{\C})$: for example, objects are Lagrangians $L \subset X$, with respect to $\omega$, endowed with a unitary connection, with curvature $\operatorname{id}\otimes B|_{L}$. Similarly, morphisms are defined in terms of holomorphic discs and the monodromy of the unitary connections, twisted by the $B$-field (see \cite[Section $4.2.4$]{Sheridan_versality}). 

The resulting $A_{\infty}$ category $\operatorname{Fuk}(X, \omega^{\C})$ is independent of the choice of representative of $[\omega^{\C}]$, up to $A_{\infty}$-equivalence. But, by definition, in order to have a concrete model of the Fukaya category, and to distinguish objects with special geometric properties, for example stationary Lagrangians, one must fix a representative of the (complexified) K\"ahler class $[\omega^{\C}]$ (see \cite{SchoenWolfson, ThomasYau}; see also \cite{BiquardRollin_smoothing} for results on stationary Lagrangians in cscK manifolds). In the compact Calabi-Yau case, this is usually done by the Calabi-Yau theorem, solving the equations
\begin{align}\label{CYequationsIntro}
\operatorname{Ric}(\omega) = 0,\, \Delta_{\omega} B = 0.
\end{align} 
Beyond the compact Calabi-Yau case, one can formulate mirror symmetry similarly for Fanos (see e.g. \cite{KatzKontPant}), varieties of general type (see \cite{GrossKatzRuddat}), or for open Calabi-Yaus. 
 
In the Fano case, the analogue of \eqref{CYequationsIntro} is given by the equations
\begin{align}\label{FanoEquationsIntro}
\operatorname{Ric}(\omega) = \omega,\, \Delta_{\omega} B = 0,
\end{align} 
which, as is well known, are obstructed by $K$-stability. Even more importantly, these equations only make sense for the anticanonical polarisation. But according to \cite{KatzKontPant} (see also \cite[Section 0.5.3]{GrossHackingKeel_LCY} for the case of del Pezzo surfaces), the Fukaya category $\operatorname{Fuk}(X, \omega^{\C})$ of a compact Fano $X$ endowed with an \emph{arbitrary} complexified K\"ahler form $\omega^{\C}$ appears naturally in mirror symmetry, as equivalent to the category of matrix factorisations of a Landau-Ginzburg model $(Y, w)$ (a pair of a complex manifold $Y$ with a nonconstant holomorphic function $w$). The choice of complexified K\"ahler class is mirror to the choice of complex structure on $(Y, w)$, including the potential $w$. We are thus led to considering the problem of fixing \emph{special representatives for arbitrary complexified K\"ahler classes on a Fano} (not just for the real, anticanonical polarisation).  
 
Let us also look at the open case, for log Calabi-Yaus belonging to the important class of (interiors of) Looijenga pairs, appearing in the Gross-Siebert programme. We consider the two-dimensional case, for simplicity, see \cite{GrossHackingKeel_LCY} (and see e.g. \cite[Section 0.4]{GrossHackingKeel_LCY} for higher dimensions). Then, $(Y, D)$ is a pair given by a (necessarily rational) smooth projective surface $Y$, and singular, nodal, anticanonical curve $D \subset Y$. The complement $U = Y \setminus D$ is noncompact Calabi-Yau: it is endowed with the holomorphic symplectic form $\Omega = s^{-1}_D$, where $s_D$ is a defining section of $D$. Note that the simplest example is given by the case when $Y$ is toric, with a fixed toric structure, with toric boundary $D$.

According to \cite[Conjecture $0.9$]{GrossHackingKeel_LCY}, the relevant Fukaya category, \emph{in the case when $U$ is affine}, is the wrapped Fukaya category $\operatorname{Fuk}_{\operatorname{wr}}(U, \omega^{\C}|_{U})$, where $\omega^{\C}|_{U}$ is the \emph{restriction to $U$} of a complexified K\"ahler form $\omega^{\C}$ \emph{defined on the compactification} $Y$. Thus, in order to fix a representative of the $A_{\infty}$-equivalence class of $\operatorname{Fuk}_{\operatorname{wr}}(U, \omega^{\C}|_{U})$, we are led to considering special representatives $\omega^{\C}$ \emph{on the compactification $Y$}. The $B$-field should be part of the definition of such special representatives in a crucial way. (This is different of course from the important and difficult problem of finding complete Ricci flat metrics on the complement $U = Y \setminus D$). Note that the choice of a complexified K\"ahler class $[\omega^{\C}]$ corresponds to the choice of complex structure on the mirror affine log Calabi-Yau $\check{U}$, through an especially simple mirror map (this is a case when mirror symmetry holds globally, even far away from the large volume limit). A similar picture should hold in the higher dimensional case, at least when $D$ supports an ample divisor, see e.g. \cite[Conjecture 0.8]{GrossHackingKeel_LCY}. 

We will offer additional motivation for considering \eqref{modelEqIntro} as a way to fix a special representative of the complexified K\"ahler class $[\omega^{\C}]$. We show that \eqref{CYequationsIntro} and \eqref{FanoEquationsIntro} can be recovered as a (suitably normalised) \emph{large volume limit} of \eqref{modelEqIntro}; we discuss possible connections to Landau-Ginzburg models in \ref{MirrorConjectures}, and describe objects in Fukaya categories associated with solutions in \ref{MeanCurvSec}. 

In particular, in \ref{MirrorConjectures} below, we point out the general question of ``uniformising" mirror pairs in the Fano case, 
\begin{equation*} 
(X, [\omega^{\C}_X], s_X \in H^0(K^{-1}_X))\,|\,((Y, w), [\omega^{\C}_Y], \Omega_Y\in H^0(K_Y)),
\end{equation*}
i.e. of the relation between (the existence of) special representatives for the complexified K\"ahler classes $[\omega^{\C}_X]$ and $[\omega^{\C}_Z]$, where $f\!: Z \to \PP^1$ is a suitable (tame) compactification of the Landau-Ginzurg model $w\!: Y \to \C$, with $[\omega^{\C}_Z]|_Y = [\omega^{\C}_Y]$. For example, from a differential geometric perspective, we ask if the solvability of the Dervan-Ross equation (see \cite{DervanRoss_stablemaps}),
\begin{equation*} 
s(\omega_Z) - \Lambda_{\omega_Z}f^*\eta = c_Z, 
\end{equation*}
in a real K\"ahler class $[\omega_Z]$ on the compactified Landau-Ginzurg model $Z$ (for a fixed K\"ahler form $\eta$ on $\PP^1$), is related to the solvability of our equation
\begin{equation*}
s(\omega_X) + \gamma \frac{(\omega^{\C}_X)^n}{(\omega_X)^n} = c_X 
\end{equation*} 
on the mirror Fano manifold $(X, [\omega^{\C}_X])$, at least nearby certain limit points in the space of complex structures on $X$. 

Similarly, from an algebro-geometric perspective, in \ref{ComplexKstab} we ask for a characterisation of those Landau-Ginzburg models $((Y, w), [\omega^{\C}_Y], \Omega_Y)$ which are mirror to $K$-stable Fanos $(X, [\omega^{\C}_X], D_X = \dv(s_X))$, after a suitable extension of $K$-stability to complexified K\"ahler classes.
\subsection{Deformed Hermitian Yang-Mills connections}
Let us write \eqref{modelEqIntro}  in the form
\begin{equation*} 
s(\omega) + (-\ii)^n \bar{\gamma} \frac{(\omega + \ii B)^n}{\omega^n} = c \in \R.
\end{equation*}
Setting
\begin{equation*}
(-\ii)^n\bar{\gamma} = -|\gamma| e^{-\ii \hat{\theta}},
\end{equation*}
the angle $\hat{\theta}$ is uniquely determined, modulo $2\pi$, by the necessary reality condition
\begin{equation}\label{realityCond}
\int_X (\omega + \ii B)^n \in e^{\ii \hat{\theta}}\R_{> 0}.
\end{equation}
So, the single complex equation \eqref{modelEqIntro} is equivalent to the system
\begin{equation}\label{dKYM}
\begin{dcases}
  \Imm e^{-\ii \hat{\theta}} (\omega + \ii B)^n = 0\\
 s(\omega) - |\gamma| \Rea e^{-\ii \hat{\theta}} \frac{(\omega + \ii B)^n}{\omega^n} = c, 
\end{dcases}
\end{equation}
to be solved for the $(1,1)$ forms $\omega$, $B$ within their fixed cohomology classes. This is the system studied by Schlitzer and the second author in \cite{SchlitzerStoppa}. 

We see that the imaginary part of our complex PDE \eqref{modelEqIntro} is the well-studied \emph{deformed Hermitian Yang-Mills (dHYM) equation} appearing in the theory of mirror symmetry, see e.g. \cite{CollinsJacobYau, JacobYau_special_Lag, LeungYauZaslow}. This equation, when solvable (see \cite{GaoChen_Jeq_dHYM, Takahashi_dHYM, DatarPingali_dHYM, Lin_dHYM}), gives a unique way, compatible with mirror symmetry and expressing a vanishing moment map condition, to fix the $B$-field, for each choice of K\"ahler form $\omega$. 

More generally, fixing a holomorphic line bundle $L\to X$, we can follow ideas of Collins and Yau on the dHYM equation with a $B$-field (see \cite[p. $82$]{CollinsYau_momentmaps_preprint}) and consider the analogue of \eqref{modelEqIntro} given by
\begin{equation}\label{generalLIntro}
s(\omega) + \gamma \frac{(\ii \omega + B_{\omega} + F_L)^n}{\omega^n} = c,
\end{equation}
where $F_L = \ii F(A_{h_L})$ denotes the \emph{real} curvature of a Hermitian metric $h_L$ on the fibres of $L$, and $B_{\omega}$ is the $\omega$-harmonic representative of the class $[B]$. Note that this reduces to \eqref{modelEqIntro} in the special case $L \cong \mathcal{O}_X$. In the general case, we may regard $B_{\omega} + F_L$ as the special representative of the $B$-field class $[B + c_1(L)]$, and $\omega^{\C} = \ii \omega + B_{\omega} + F_L$ as the corresponding special representative of the complexified K\"ahler class $[\ii \omega + B + c_1(L)]$. Recall that adding to $B$ a closed, integral $(1,1)$-form changes the Fukaya category by an equivalence of $A_{\infty}$ categories (see \cite[Remark $4.11$]{Sheridan_versality}), so \eqref{generalLIntro} is still trying to fix a special complexified K\"ahler form yielding the same Fukaya category, up to equivalence.
\begin{rmk} The natural automorphism group for \eqref{dKYM} is the group $\operatorname{Aut}(X, [\omega^{\C}])$ of holomorphic automorphisms of $X$ fixing the cohomology classes $[\omega]$ and $[B]$. This is very different from the case of the twisted cscK equation (see e.g. \cite{StoppaTwisted}), where the twisting form $\alpha$ is fixed, and the automorphism group automatically vanishes as soon as $\alpha$ is positive. 
\end{rmk}
\subsection{Moment maps and Futaki invariant}
Our equation \eqref{modelEqIntro} (or \eqref{generalLIntro}) can also be motivated from moment map geometry. 
\begin{lemma}\label{momentMapLem} Suppose that the class of the $B$-field is Hodge. Then, the cscK equation with $B$-field \eqref{generalLIntro} corresponds to the vanishing moment map condition for a (formally complexified) Hamiltonian group action, with respect to a K\"ahler form determined by $|\gamma|>0$. 
\end{lemma}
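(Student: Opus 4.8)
The plan is to realise \eqref{generalLIntro} --- equivalently the system \eqref{dKYM} --- as the zero set of a moment map, by coupling the Fujiki--Donaldson moment map for scalar curvature with the Collins--Yau moment map for the deformed Hermitian--Yang--Mills / central charge operator, in the coupled form used by Schlitzer and Stoppa in \cite{SchlitzerStoppa}. I would first fix the configuration space. Choose a symplectic form $\omega$ in the class $[\omega]$ and let $\mathcal{J}=\mathcal{J}(X,\omega)$ be the space of $\omega$-compatible integrable almost complex structures. The Hodge hypothesis enters precisely here: up to rescaling, the $B$-field class $[B]+c_1(L)$ equals $2\pi c_1(\mathcal{L})$ for a $C^{\infty}$ complex line bundle $\mathcal{L}\to X$, and then the representatives $B_{\omega}+F_L$ appearing in \eqref{generalLIntro} are exactly the curvatures of Hermitian metrics on $\mathcal{L}$ that are of type $(1,1)$ with respect to the complex structure at hand. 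Let $\mathcal{K}$ be the space of such metrics (if $b_1(X)>0$, enlarge it by the torus $H^1(X,\R)/H^1(X,\Z)$ of flat directions), and let $\mathcal{P}\subset\mathcal{J}\times\mathcal{K}$ be the open subset of pairs $(J,h)$ for which $\ii\omega+F_h$ lies in the positivity domain where \eqref{generalLIntro} is elliptic (so that in particular $F_h$ is of type $(1,1)$ for $J$). A point of $\mathcal{P}$ then determines a complexified K\"ahler form $\omega^{\C}=\ii\omega+F_h$ in the fixed class $[\ii\omega+B+c_1(L)]$ together with a compatible integrable $J$.

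Next I would equip $\mathcal{P}$ with the two-form $\Omega_{|\gamma|}=\Omega_{\mathrm{FD}}+|\gamma|\,\Omega_{\mathrm{Z}}$, where $\Omega_{\mathrm{FD}}$ is the pullback of the Fujiki--Donaldson symplectic form on $\mathcal{J}$ and $\Omega_{\mathrm{Z}}$ is the closed two-form on $\mathcal{P}$ governing the Hamiltonian geometry of the operator $\opZ$ (that is, of $e^{-\ii\hat\theta}(\omega+\ii F_h)^n/\omega^n$), built pointwise from $\omega$, $F_h$ and the angle $\hat\theta$ --- the latter determined by $\gamma$ through the reality condition \eqref{realityCond} --- essentially the form of \cite{CollinsYau_momentmaps_preprint,SchlitzerStoppa}. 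One checks $\Omega_{|\gamma|}$ is closed and compatible with the natural (formally integrable) complex structure on $\mathcal{P}$; non-degeneracy holds precisely when $|\gamma|>0$, since $\Omega_{\mathrm{FD}}$ is already non-degenerate along $\mathcal{J}$ while the $\mathcal{K}$-directions are seen only through the $|\gamma|\,\Omega_{\mathrm{Z}}$ term. This is the ``K\"ahler form determined by $|\gamma|>0$'' of the statement, the parameter entering just as the relative weight of the two pieces.

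Then I would let $\mathcal{G}=\operatorname{Ham}(X,\omega)$ act: on $\mathcal{J}$ in the standard way, and on $\mathcal{K}$ by pullback after lifting Hamiltonian vector fields to $\mathcal{L}$ (the lift exists since $c_1(\mathcal{L})$ is fixed by Hamiltonian isotopies, is unique up to unitary gauge, and unitary gauge acts trivially on $\mathcal{K}$), hence diagonally on $\mathcal{P}$, preserving $\Omega_{|\gamma|}$. The key claim is that this action is Hamiltonian with moment map $\mu=\mu_{\mathrm{FD}}+|\gamma|\,\mu_{\mathrm{Z}}$: by Fujiki--Donaldson, $\langle\mu_{\mathrm{FD}}(J),f\rangle=\int_X f\,(s(\omega)-\hat s)\,\omega^n$ up to a universal constant, and by the Collins--Yau / Schlitzer--Stoppa computation $\langle\mu_{\mathrm{Z}}(J,h),f\rangle=\int_X f\,\big(\Rea\,e^{-\ii\hat\theta}\tfrac{(\omega+\ii F_h)^n}{\omega^n}-(\text{average})\big)\,\omega^n$ for the real part, with the $\Imm$-part giving the dHYM operator. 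Since moment maps add under the sum of invariant symplectic forms, $\mu=0$ reads exactly as the system \eqref{dKYM}, i.e.\ as \eqref{generalLIntro} with $c$ the indicated cohomological constant; using instead the $C^{\infty}(X,\C)^*$-valued moment map coming from the complex structure on $\mathcal{P}$ reproduces the single complex equation \eqref{modelEqIntro}/\eqref{generalLIntro} directly. Finally, the formal complexification $\mathcal{G}^{\C}$ acts formally on $\mathcal{P}$ with orbits corresponding to the complexified K\"ahler forms in $[\omega^{\C}]$ compatible with a fixed integrable complex structure, so that solving \eqref{generalLIntro} in the class amounts to finding a zero of $\mu$ on a complexified orbit.

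The main obstacle is the Hamiltonicity computation for the \emph{coupled} action: one must check that the Fujiki--Donaldson and Collins--Yau moment-map identities survive when the $\mathcal{J}$- and $\mathcal{K}$-actions are driven by the same lifted Hamiltonian vector field, and that the resulting cross-terms vanish, or reassemble exactly into the $-|\gamma|\Rea\,e^{-\ii\hat\theta}(\omega+\ii F_h)^n/\omega^n$ contribution of \eqref{dKYM} rather than spoiling it. The remaining points --- giving $\mathcal{K}$ a clean manifold structure on which $\mathcal{G}$ acts and $\Omega_{\mathrm{Z}}$ is well defined despite the $J$-dependence of the type condition; checking non-degeneracy and compatibility of $\Omega_{|\gamma|}$ for $|\gamma|>0$; carrying along the flat directions when $b_1(X)>0$; and pinning down $c$ as the correct cohomological pairing, matching the normalisation in \eqref{modelEqIntro} --- I expect to be routine.
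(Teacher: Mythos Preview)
Your conceptual plan --- couple the Fujiki--Donaldson moment map with the Collins--Yau one, as in \cite{SchlitzerStoppa} --- is the right idea, but the group you have chosen is too small. With $\mathcal{G}=\operatorname{Ham}(X,\omega)$ acting on $\mathcal{J}\times\mathcal{K}$, the moment map takes values in the dual of $\operatorname{Lie}(\operatorname{Ham})\cong\m{C}^\infty_0(X,\R)$, so it is a \emph{single} real function; there is no room for both the real and imaginary parts of \eqref{dKYM} to appear. Your remark that ``unitary gauge acts trivially on $\mathcal{K}$'' is exactly where the dHYM equation gets lost: the Collins--Yau moment map is the moment map for the \emph{gauge group} acting on the space of connections $\mathcal{A}$, and by passing to metrics and quotienting by gauge you have discarded precisely the component whose vanishing is $\Imm\,e^{-\ii\hat\theta}(\omega+\ii F)^n=0$. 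The correct group is the \emph{extended} gauge group $\widetilde{\mathcal{G}}$, the extension of $\operatorname{Ham}(X,\omega)$ by the unitary gauge group, acting on $\mathcal{J}^{\operatorname{int}}\times\mathcal{A}$; the $\f{g}$-component of its moment map is the dHYM operator and the $\f{h}$-component is the twisted scalar curvature. (The ``$C^\infty(X,\C)^*$-valued complex moment map'' you invoke is not a standard object and does not substitute for this.)

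The paper's proof is much shorter and avoids rebuilding this machinery. It observes that since $[B]$ is Hodge, $m[B]=c_1(N)$ for some line bundle $N$, and then the elementary rescaling identity
\begin{equation*}
s(\omega)+\gamma\frac{(\ii\omega+B_\omega+F_L)^n}{\omega^n}=c \iff s(m\omega)+\frac{\gamma}{m}\frac{(\ii m\omega+F_{N\otimes L^m})^n}{(m\omega)^n}=\frac{c}{m}
\end{equation*}
absorbs the harmonic part $B_\omega$ into a curvature term (choosing $h_N$ with $F_{h_N}=mB_\omega$), so the equation becomes literally the one treated in \cite[Corollary~3]{SchlitzerStoppa} for the line bundle $N\otimes L^m$, and the moment map interpretation follows by direct citation. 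If you want to salvage your approach, replace $\operatorname{Ham}$ by $\widetilde{\mathcal{G}}$ and $\mathcal{K}$ by $\mathcal{A}$ --- but then you are simply reproving \cite[Corollary~3]{SchlitzerStoppa}, and the rescaling trick is still needed to handle the $B_\omega$ term.
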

\begin{proof}
Indeed, suppose that class of the $B$-field is Hodge, $[B] \in H^{1, 1}(X, \mathbb{Q})$. Then, we have $m [B] = c_1(N)$ for some holomorphic line bundle $N$ and some minimal $m \in \mathbb{N}$, and we observe
\begin{equation}\label{momentMapIntro}
s(\omega) + \gamma \frac{(\ii \omega + B_{\omega} + F_L)^n}{\omega^n} = c \iff s(m \omega) + \frac{\gamma}{m} \frac{(\ii m\omega + F_{N\otimes L^{m}})^n}{(m\omega)^n} = \frac{c}{m}.   
\end{equation}
In the latter equation the metric on the fibres of $N\otimes L^m$ is given by $h_{N} \otimes h^{\otimes m}_{L}$, where $F_{h_{N}} = m B_{\omega}$. According to \cite[Corollary 3]{SchlitzerStoppa}, the right hand side of \eqref{momentMapIntro} is precisely the vanishing moment map condition for a (formally complexified) Hamiltonian action of the extended gauge group $\widetilde{\mathcal{G}}$ on the product space $\mathcal{J}^{\operatorname{int}} \times \mathcal{A}$ of $(m\omega_0)$-compatible, integrable almost complex structures $\mathcal{J}^{\operatorname{int}}$ and connections $\mathcal{A}$ on $N\otimes L^m$, endowed with a suitable K\"ahler structure, determined by $|\gamma|$. 
\end{proof}
We will prove an analogue of this result also in the case of arbitrary $B$-field classes.
\begin{thm}\label{momentMapThm} The cscK equation with $B$-field \eqref{modelEqIntro}, when $[B] \in H^{1,1}(X, \R)$ is not necessarily Hodge, corresponds to the vanishing moment map condition for a (complexified) \emph{infinitesimal} Hamiltonian action (i.e., a Hamiltonian Lie algebra action), with respect to a K\"ahler form determined by $|\gamma|>0$.
\end{thm}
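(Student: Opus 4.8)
The plan is to revisit the moment-map construction of \cite{SchlitzerStoppa} that underlies Lemma~\ref{momentMapLem}, and to observe that rationality of $[B]$ is used there in exactly one place: it is what allows one, after rescaling $\omega_0$, to exponentiate an infinitesimal gauge action to an honest Lie group, the extended gauge group $\widetilde{\m{G}}$. The configuration space, its K\"ahler form, the infinitesimal action and the moment map all make sense for an arbitrary closed real $(1,1)$-class, provided one accepts a Lie algebra action in place of a group action --- which is precisely the content of the statement.

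First I would fix an arbitrary closed real $(1,1)$-form $B_0 \in [B]$ and replace the space of connections on the (now nonexistent) prequantum line bundle by $\m{A} := \m{A}^1(X,\R)$, the space of real $1$-forms, equipped with the ``curvature map'' $a \mapsto B_0 + da \in [B]$; a different choice of $B_0$ changes $\m{A}$ by a fixed translation and so yields canonically identified data. The unitary gauge group $C^\infty(X,S^1)$ is replaced by its ``Lie algebra'' $\f{g}_0 := C^\infty(X,\R)$, acting on $\m{A}$ by $a \mapsto a + d\phi$ and leaving the curvature map invariant; and the extended gauge group $\widetilde{\m{G}}$ is replaced by the Lie algebra extension
\[
0 \longrightarrow \f{g}_0 \longrightarrow \widetilde{\f{g}} \longrightarrow \mathrm{ham}(X,\omega_0) \longrightarrow 0,
\]
with the prequantisation (``magnetic'') cocycle $(X_h, X_{h'}) \mapsto B_0(X_h, X_{h'})$. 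This extension is defined for every closed $2$-form, is independent of $B_0$ up to isomorphism, and --- by classical geometric prequantisation --- integrates, after a suitable rescaling of $\omega_0$ exactly as in the proof of Lemma~\ref{momentMapLem}, to a Lie group precisely when $[B]$ is a rational class; for irrational $[B]$ only $\widetilde{\f{g}}$ survives, which is the source of the word \emph{infinitesimal}.

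Next I would transport the K\"ahler form $\Omega$ (depending on $|\gamma|>0$) and the moment map $\mu$ of \cite{SchlitzerStoppa} to $\m{P} := \m{J}^{\mathrm{int}}(X,\omega_0) \times \m{A}$. Inspecting those formulas, the K\"ahler structure is assembled from $\omega_0$, the varying integrable almost complex structure $J$ and the potential $a$ only through $B_0 + da$ and $d$-exact terms; hence it descends unchanged to $\m{P}$, is compatible with the natural (formal) complex structure on $\m{P}$, making $(\m{P},\Omega)$ K\"ahler, and is $\widetilde{\f{g}}$-invariant. The same holds for $\mu \colon \m{P} \to \widetilde{\f{g}}^{*}$, which retains the identity $d\langle\mu,\xi\rangle = \iota_{\underline{\xi}}\,\Omega$ (with $\underline{\xi}$ the vector field on $\m{P}$ generated by $\xi \in \widetilde{\f{g}}$) and the infinitesimal equivariance $\langle\mu,[\xi,\xi']\rangle = \{\langle\mu,\xi\rangle,\langle\mu,\xi'\rangle\}$. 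Its $\f{g}_0$-component reproduces the imaginary part of \eqref{modelEqIntro}, namely the deformed Hermitian--Yang--Mills equation (first line of \eqref{dKYM}), while its $\mathrm{ham}$-component reproduces the scalar-curvature equation (second line of \eqref{dKYM}); so $\mu = 0$ is precisely the cscK equation with $B$-field for the pair $(\omega_J, B_0 + da)$, the real constant $c$ being fixed by \eqref{realityCond} as before. Passing to the formal complexification $\widetilde{\f{g}}\otimes\C$, whose formal orbits through a fixed integrable $J$ move $\omega$ within $[\omega]$ and $B$ within $[B]$ in the Donaldson--Fujiki manner, then yields the assertion.

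I expect the one genuinely delicate point to be the equivariance of $\mu$ in the non-rational regime: a priori, the failure of $[B]$ to be rational could force $\mu$ to be only an \emph{affine} (non-equivariant) moment map, or could move the relevant cocycle from $\widetilde{\f{g}}$ itself into a nontrivial central extension of the action on $\m{P}$. I do not expect this to occur --- the $B_0$-cocycle has already been absorbed into $\widetilde{\f{g}}$ as a Lie algebra extension, so that $\widetilde{\f{g}}$ acts genuinely on $\m{P}$ and $\mu$ is equivariant on the nose --- but verifying it, together with checking that every construction is independent of the auxiliary form $B_0$ and keeping track of the topological constants, is where the real work lies. A reassuring consistency check is that rational classes are dense in $H^{1,1}(X,\R)$ and that $\widetilde{\f{g}}$, $\Omega$ and $\mu$ recover the objects of Lemma~\ref{momentMapLem} along any approximating sequence of rational $[B]$.
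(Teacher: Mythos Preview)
Your proposal is correct and follows essentially the same route as the paper: replace the connection space by $\m{A}^1(X,\R)$ with curvature map $a\mapsto B_0+da$, build the Lie algebra extension of $\mathrm{ham}(X,\omega_0)$ by $C^\infty(X,\R)$ via the $B_0$-cocycle, check that the resulting infinitesimal action on $\m{P}=\m{J}^{\mathrm{int}}\times\m{A}$ is Hamiltonian with moment map whose two components are the dHYM and the scalar-curvature equation, and then complexify. You have also correctly singled out equivariance of $\mu$ as the main technical point; the paper devotes an explicit (and somewhat lengthy) computation to verifying precisely this, and in addition proves integrability of the orbit distribution and gives an explicit parametrisation of the complexified leaves, which you may want to flesh out beyond the phrase ``in the Donaldson--Fujiki manner''.
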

The proof is rather more technical and is given in Section~\ref{momentMapSec}. Although the infinitesimal Hamiltonian action is not a group action in general, we show in Section~\ref{momentMapSec} that the (complexified) infinitesimal action can be integrated; in other words, there is a meaningful notion of \emph{orbits} for this infinitesimal action.

Fix any representative~$\omega$, and let~$B = B(\omega)$ denote the corresponding dHYM representative for~$[B]$ with respect to~$\omega$, that is, the unique solution $B(\omega)$ to 
\begin{equation*} 
  \Imm e^{-\ii \hat{\theta}} (\omega + \ii B)^n = 0 
\end{equation*}
(assuming one exists; according to~\cite{GaoChen_Jeq_dHYM, Takahashi_dHYM, DatarPingali_dHYM}, this does not depend on the choice of representative for~$[\omega]$). Let~$\f{h}_0$ denote the Lie algebra of~$\omega$-Hamiltonian holomorphic vector fields on~$X$, i.e. vector fields~$V$ admitting a holomorphy potential~$\varphi(V, \omega)$ with respect to~$\omega$ (as well known,~$\f{h}_0$ does not depend on the choice of K\"ahler metric). 
\begin{definition}\label{def:Futaki_dHYM=0}
The Futaki invariant for~\eqref{modelEqIntro} is the linear function on~$\f{h}_0$ given by
\begin{equation*}
\m{F}_{[\omega^{\C}]}(V) = \int_X\varphi(V,\omega)\left(s(\omega)-|\gamma|\frac{\Rea\left(\mrm{e}^{-\ii\hat{\vartheta}}\left(\omega+\ii B(\omega)\right)^n\right)}{\omega^n}-c\right)\omega^n.
\end{equation*} 
Note that~$\m{F}_{[\omega^{\C}]} \in \f{h}^{\vee}_0$ is also a function of the real coupling~$|\gamma|$; we write~$\m{F}_{[\omega^{\C}], |\gamma|}$ when we need to emphasise this. Note that we have, by definition, 
\begin{equation*}
\m{F}_{[\omega^{\C}]} = \m{F}_{[\omega]} + |\gamma| \m{F}'_{[\omega^{\C}]}
\end{equation*}
where~$\m{F}_{[\omega]}$ is the classical Futaki character and the linear map~$\m{F}'_{[\omega^{\C}]} \in \f{h}^{\vee}_0$ does not depend on~$|\gamma|$.  
\end{definition} 
Standard moment map arguments, using Theorem~\ref{momentMapThm}, show that~$\m{F}_{[\omega^{\C}]} \in \f{h}^{\vee}_0$ (and so~$ \m{F}'_{[\omega^{\C}]}$) does not depend on the choice of representative for~$[\omega]$ (see Section~\ref{FutakiSec}). So we have the usual consequences, 
\begin{lemma} If the cscK equation with~$B$-field~\eqref{modelEqIntro} is solvable in~$[\omega^{\C}]$, then~$\m{F}_{[\omega^{\C}]} \equiv 0$. More generally, suppose~$\omega^{\C}$ solves the extremal equation with~$B$-field~\eqref{extrEqIntro}, namely,
\begin{equation*}
s(\omega) + \gamma \frac{(\omega^{\C})^n}{\omega^n} = \xi \in \f{h}_0(\omega),
\end{equation*}
where~$\f{h}_0(\omega)$ is the space of smooth real functions which are holomorphy potentials with respect to~$\omega$. Then, for any fixed choice of a maximal compact subgroup~$K \subset \operatorname{Aut}(X)$, there exists a unique~$V \in \operatorname{Lie}(K)$ (the \emph{extremal field}), identified with the Futaki invariant~$\m{F}_{[\omega^{\C}]} \in \f{h}^{\vee}_0$ under the Futaki-Mabuchi inner product, such that
\begin{equation*}
\nabla^{1, 0}_{\omega} \xi = V.
\end{equation*} 
\end{lemma}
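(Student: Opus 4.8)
The plan is to split the single complex equation into its imaginary and real parts, just as in the passage from \eqref{modelEqIntro} to \eqref{dKYM}, and then to carry out the classical Futaki--Mabuchi argument for extremal metrics, with Theorem~\ref{momentMapThm} supplying the required metric-independence. Since $s(\omega)$ is real while $c$, resp.\ $\xi$, is real, the imaginary part of \eqref{modelEqIntro}, resp.\ \eqref{extrEqIntro}, reads $\Imm\big(\gamma\,(\omega^{\C})^n/\omega^n\big)=0$; by the computation around \eqref{dKYM} --- the reality condition \eqref{realityCond} still applies, $\xi$ being real, so the same angle $\vartheta$ occurs --- this is precisely the dHYM equation $\Imm\big(\mrm{e}^{-\ii\vartheta}(\omega+\ii B)^n\big)=0$. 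Hence in any solution $\omega^{\C}=\ii\omega+B$ the form $B$ coincides with the dHYM representative $B(\omega)$ of $[B]$ with respect to $\omega$; in particular such a representative exists, so the Futaki invariant of Definition~\ref{def:Futaki_dHYM=0} is defined, with $B(\omega)=B$. Taking real parts, \eqref{modelEqIntro} becomes $s(\omega)-|\gamma|\,\Rea\big(\mrm{e}^{-\ii\vartheta}(\omega+\ii B(\omega))^n\big)/\omega^n=c$: the function appearing inside the integral defining $\m{F}_{[\omega^{\C}]}$ vanishes identically, so $\m{F}_{[\omega^{\C}]}\equiv 0$. This is the first assertion.

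For the extremal equation \eqref{extrEqIntro} the same real part equals $\xi$ rather than $c$, so that
\begin{equation*}
\m{F}_{[\omega^{\C}]}(V)=\int_X\varphi(V,\omega)\,(\xi-c)\,\omega^n\qquad\text{for every }V\in\f{h}_0 .
\end{equation*}
Integrating \eqref{extrEqIntro} over $X$ and using \eqref{realityCond} shows that $\xi$ has average $c$, so $\xi-c$ is the zero-average normalisation of $\xi$ and $\nabla^{1,0}_\omega(\xi-c)=\nabla^{1,0}_\omega\xi$.

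Normalising the holomorphy potentials $\varphi(V,\omega)$ to have zero average as well, the displayed identity reads $\m{F}_{[\omega^{\C}]}(\cdot)=\langle \xi-c,\cdot\rangle_{\mrm{FM}}$, where $\langle\xi_1,\xi_2\rangle_{\mrm{FM}}=\int_X\xi_1\xi_2\,\omega^n$ is the Futaki--Mabuchi inner product on $\f{h}_0$; by the Futaki--Mabuchi theorem this pairing is positive definite and independent of the choice of $\omega$ in the class, and by Theorem~\ref{momentMapThm} so is $\m{F}_{[\omega^{\C}]}$. Hence, under the isomorphism $\f{h}_0\cong\f{h}^{\vee}_0$ induced by $\langle\cdot,\cdot\rangle_{\mrm{FM}}$, the functional $\m{F}_{[\omega^{\C}]}$ corresponds to the real holomorphy potential $\xi-c$, equivalently to the holomorphic vector field $\nabla^{1,0}_\omega\xi$; uniqueness of the corresponding element of $\operatorname{Lie}(K)$ follows from positive definiteness. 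It remains to see that $\nabla^{1,0}_\omega\xi$ lies in $\operatorname{Lie}(K)$: since $\xi$ is \emph{real}, its Hamiltonian vector field is $\omega$-Killing, so $\nabla^{1,0}_\omega\xi$ lies in the complexified Lie algebra of the compact group $\operatorname{Isom}_0(X,\omega)$; replacing $\omega^{\C}$ by $g^{*}\omega^{\C}$ for a suitable $g\in\operatorname{Aut}_0(X)$ --- which solves \eqref{extrEqIntro} with $\xi,\omega$ replaced by $g^{*}\xi,g^{*}\omega$, by naturality of $s$, of $(\omega^{\C})^n/\omega^n$, and of $\nabla^{1,0}$ --- one may arrange $\operatorname{Isom}_0(X,\omega)\supseteq K$, using conjugacy of maximal compact subgroups together with the Calabi-type fact that the isometry group of an extremal solution is maximal compact. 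Then $\nabla^{1,0}_\omega\xi\in\operatorname{Lie}(K)$, as required.

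The formal part --- the real/imaginary split and the identification of the Futaki--Mabuchi pairing --- is routine once Theorem~\ref{momentMapThm} and the description \eqref{dKYM} are available. The step I expect to be genuinely delicate is the last one: verifying that the classical Futaki--Mabuchi and Calabi arguments (positive definiteness of $\langle\cdot,\cdot\rangle_{\mrm{FM}}$ on $\operatorname{Lie}(K)$, and the symmetry of extremal solutions allowing $\nabla^{1,0}_\omega\xi$ to be conjugated into a prescribed $\operatorname{Lie}(K)$) transfer without change to the $B$-twisted equation \eqref{extrEqIntro} --- in particular that the additional dHYM constraint and the coupling term $\gamma\,(\omega^{\C})^n/\omega^n$ do not interfere with the relevant Lie-theoretic arguments. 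Given the moment map interpretation of Theorem~\ref{momentMapThm}, this should ultimately be formal.
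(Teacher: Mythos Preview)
Your proof is correct and matches the paper's approach: the paper does not give a proof of this lemma at all, stating only that it follows as ``the usual consequences'' once metric-independence of $\m{F}_{[\omega^{\C}]}$ is established via Theorem~\ref{momentMapThm} and Section~\ref{FutakiSec}. You have correctly unpacked those consequences --- the real/imaginary split reducing to \eqref{dKYM}, the identification via the Futaki--Mabuchi pairing, and the conjugation into $\operatorname{Lie}(K)$ --- and the Calabi-type maximality step you flag as delicate is not addressed in the paper either.
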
 
Thus, as usual, the possible extremal fields for our extremal equation~\eqref{extrEqIntro} are determined a priori, up to conjugation, by the Futaki invariant~$\m{F}_{[\omega^{\C}]}$. 
\subsection{Large volume limit and K\"ahler-Yang-Mills metrics}\label{KYMSec}
A crucial notion in mirror symmetry is that of \emph{large volume limit} (see e.g. \cite[Section 4]{Sheridan_versality}). For us, it means that we should analyse the formal behaviour of our equation under the scaling 
\begin{equation*}
\omega \mapsto k \omega,\,k\to \infty,\,\text{for } [\omega^{\C}] = [\ii \omega + B]\textrm{ fixed.}
\end{equation*}
Since $s(k \omega) = k^{-1} s(\omega)$, our equation \eqref{modelEqIntro} for 
\begin{equation*}
\omega^{\C}_k = B + \ii k \omega 
\end{equation*} 
is equivalent to 
\begin{equation*}
s(\omega) -  k|\gamma_k| e^{-\ii \hat{\theta}_k}\frac{(\omega + \ii k^{-1}B)^n}{\omega^n} = k c_k.
\end{equation*}
Here, $\gamma_k$ is a complex coupling constant, possibly depending on $k$, and $c_k$ is the corresponding real topological constant. Let us set 
\begin{equation}\label{zSlope}
z = n \frac{[\omega]^{n-1} \cup [B]}{[\omega]^n}.
\end{equation}
According to \cite[Proposition 7]{SchlitzerStoppa}, there are expansions
\begin{align}\label{LargeVolumeExpansion}
&\nonumber \Imm e^{-\ii \hat{\theta}_k}\frac{(\omega + \ii k^{-1}B)^n}{\omega^n} = k^{-1} (\Lambda_{\omega} B - z) + O(k^{-3}),\\
& \Rea e^{-\ii \hat{\theta}_k}\frac{(\omega + \ii k^{-1}B)^n}{\omega^n} = 1 - k^{-2} \left(\Lambda^2_{\omega}(B\wedge B) -  z \Lambda_{\omega} B +\frac{1}{2} z^2\right) + O(k^{-4}).  
\end{align}
So our sequence solves equations of the form
\begin{equation}\label{LargeVolumeEquations}
\begin{dcases}
-  |\gamma_k| (\Lambda_{\omega} B - z) + O(k^{-2}|\gamma_k|) = 0\\
s(\omega) + (k^{-1} |\gamma_k|)(\Lambda^2_{\omega}(B\wedge B) - z \Lambda_{\omega} B) + O(k^{-3}|\gamma_k|) = \hat{c}_k. 
\end{dcases}
\end{equation}
At a purely formal level, the limiting behaviour depends on 
\begin{equation*}
\lim_{k\to \infty} k^{-1} |\gamma_k| = \hat{\gamma} \in \R_{\geq 0} \cup \{\infty\}. 
\end{equation*}
The limiting equations become a special case of the K\"ahler-Yang-Mills system introduced by \'Alvarez-C\'onsul, Garcia-Fernandez and Garc\'ia-Prada \cite{AlvarezGarciaGarcia_KYM},  
\begin{equation}\label{KYMIntro}
\begin{dcases}
\Lambda_{\omega} B = z\\
s(\omega) + \hat{\gamma} (\Lambda^2_{\omega}(B\wedge B) - z \Lambda_{\omega} B )  = \hat{c}.
\end{dcases}
\end{equation}
Note that when $\hat{\gamma} = 0$, for example if the coupling $\gamma_k = \gamma$ is held constant, the limit decouples to
\begin{equation}\label{DecoupledHYM}
\begin{dcases}
\Lambda_{\omega} B = z \iff \Delta_{\omega} B = 0\\
s(\omega) = \hat{s},
\end{dcases}
\end{equation}
(where the stated equivalence follows at once from the K\"ahler identity $[\Lambda_{\omega}, \delbar] = \ii \delbar^*$ since $B$ is closed), and so in particular, in the Calabi-Yau case, the large volume limit of our equations becomes
\begin{equation*}
\begin{dcases}
\Delta_{\omega} B = 0\\
\Ric(\omega) = 0,
\end{dcases}
\end{equation*}
which are usually taken as the uniformising equations for Calabi-Yaus both in mathematics and physics references (where the parameter $k^{-1}$ appears as the ``string length constant" $\ell^2_s = k^{-1}$, see e.g. \cite[p. $4$]{DirichletBook}).

The extremal case \eqref{extrEqIntro} is entirely analogous, and the large volume limit is given by the extremal K\"ahler-Yang-Mills equations of \cite[Section 4.5]{AlvarezGarciaGarcia_KYM},
\begin{equation}\label{extrKYMIntro}
\begin{dcases}
\Lambda_{\omega} B = z\\
s(\omega) + \hat{\gamma} (\Lambda^2_{\omega}(B\wedge B) - z \Lambda_{\omega} B )  = \hat{\xi} \in \f{h}_0(\omega).
\end{dcases}
\end{equation} 
\subsection{Calabi-Volume functional}
Recall the \emph{complexifed volume functional}, introduced by Jacob and Yau \cite{JacobYau_special_Lag}, given by
\begin{equation*}
V_{\omega}(B) = \int_{X} r_{\omega}(B)\omega^n,
\end{equation*}
where the \emph{radius function} is defined as
\begin{equation*}
r_{\omega}(B) = \left| \frac{(\omega^{\C})^n}{\omega^n}\right|.
\end{equation*}
According to \cite{JacobYau_special_Lag}, solutions $B$ of the dHYM equation are minimisers of $V_{\omega}$, attaining the minimum 
\begin{equation*}
0 < V_{\omega}(B) = \left| \int_{X}  (\omega + \ii B)^n\right|.
\end{equation*} 
\begin{definition} We introduce a \emph{Calabi-Volume functional} given by
\begin{equation*}
\operatorname{CVol}(\omega^{\C}) = \int_X \left( s(\omega) - |\gamma| r_{\omega}(B) \right)^2  \omega^n  + |\gamma| V_{\omega}(B).
\end{equation*}
\end{definition}
\begin{lemma} Solutions of the cscK equation with $B$-field \eqref{modelEqIntro}, such that $c < \frac{1}{2}$, minimise the Calabi-Volume functional. 
\end{lemma}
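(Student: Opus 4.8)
The plan is to show that every solution of~\eqref{modelEqIntro} realises the \emph{same} value of $\operatorname{CVol}$, namely the purely topological quantity $c^2V+|\gamma|\,W$ with $V:=\int_X\omega^n$ and $W:=\bigl|\int_X(\omega+\ii B)^n\bigr|$, and that under the hypothesis $c<\tfrac12$ this is a global lower bound for $\operatorname{CVol}$ on the complexified K\"ahler class. First I would evaluate $\operatorname{CVol}$ at a solution: taking imaginary parts of~\eqref{modelEqIntro} gives the dHYM equation (first line of~\eqref{dKYM}), so $\rho:=\Rea\bigl(\mrm{e}^{-\ii\hat{\theta}}(\omega+\ii B)^n/\omega^n\bigr)$ is the real function with $|\rho|=r_\omega(B)$. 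Integrating and using the reality normalisation~\eqref{realityCond} gives $\int_X\rho\,\omega^n=W=\int_X|\rho|\,\omega^n$, and since $|\rho|-\rho\ge0$ is continuous this forces $\rho=r_\omega(B)$ pointwise; the real part of~\eqref{modelEqIntro} (second line of~\eqref{dKYM}) then reads $s(\omega)-|\gamma|\,r_\omega(B)=c$. Hence the first term of $\operatorname{CVol}$ is $c^2V$, while the Jacob--Yau identity $V_\omega(B)=W$ gives the second term, so $\operatorname{CVol}(\omega^\C)=c^2V+|\gamma|W$. Integrating~\eqref{modelEqIntro} against $\omega^n$ and using $\gamma\int_X(\ii\omega+B)^n=-|\gamma|W$ (a consequence of $(-\ii)^n\bar\gamma=-|\gamma|\mrm{e}^{-\ii\hat{\theta}}$ together with~\eqref{realityCond}) produces the topological identity $S-|\gamma|W=cV$, where $S:=\int_Xs(\omega)\,\omega^n$ depends only on $[\omega]$.

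Next I would bound $\operatorname{CVol}$ from below at an arbitrary representative $\ii\tilde\omega+\tilde B$ of $[\omega^\C]$. Cauchy--Schwarz for the measure $\tilde\omega^n$ yields
\[
\int_X\bigl(s(\tilde\omega)-|\gamma|r_{\tilde\omega}(\tilde B)\bigr)^2\tilde\omega^n\ \ge\ \frac1V\Bigl(\int_X s(\tilde\omega)\,\tilde\omega^n-|\gamma|\int_X r_{\tilde\omega}(\tilde B)\,\tilde\omega^n\Bigr)^2=\frac1V\bigl(S-|\gamma|V_{\tilde\omega}(\tilde B)\bigr)^2,
\]
using $\int_X r_{\tilde\omega}(\tilde B)\,\tilde\omega^n=V_{\tilde\omega}(\tilde B)$ by definition and the invariance of $S$ under change of representative of $[\omega]$. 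Hence $\operatorname{CVol}(\ii\tilde\omega+\tilde B)\ge g\bigl(|\gamma|V_{\tilde\omega}(\tilde B)\bigr)$, where $g(t):=\tfrac1V(S-t)^2+t$ is a convex parabola with minimum at $t_\ast=S-\tfrac V2$. By the triangle inequality $V_{\tilde\omega}(\tilde B)=\int_X r_{\tilde\omega}(\tilde B)\,\tilde\omega^n\ge\bigl|\int_X(\tilde\omega+\ii\tilde B)^n\bigr|=W$ (which is Jacob--Yau's lower bound), so the argument lies in $[\,|\gamma|W,\infty)$; and the identity $|\gamma|W=S-cV$ shows that $c<\tfrac12$ (in fact $c\le\tfrac12$) is exactly the inequality $|\gamma|W\ge t_\ast$ making $g$ nondecreasing there. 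Therefore $\operatorname{CVol}(\ii\tilde\omega+\tilde B)\ge g(|\gamma|W)=\tfrac1V(cV)^2+|\gamma|W=c^2V+|\gamma|W=\operatorname{CVol}(\omega^\C)$, which is the value attained by the solution.

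The only non-formal ingredients are the sign analysis in the first step, i.e.\ identifying $\Rea\bigl(\mrm{e}^{-\ii\hat{\theta}}(\omega+\ii B)^n/\omega^n\bigr)$ with $r_\omega(B)$ at a solution via the integral identity above, and the observation that the cohomological relation $|\gamma|W=S-cV$ converts the monotonicity of $g$ on $[\,|\gamma|W,\infty)$ into precisely the bound $c\le\tfrac12$; everything else (Cauchy--Schwarz, the triangle inequality $V_{\tilde\omega}(\tilde B)\ge W$, invariance of $S$, and the Jacob--Yau evaluation $V_\omega(B)=W$) is standard. I expect the bookkeeping around the topological constants $S,V,W$ and their interdependence through~\eqref{realityCond} to be the one place where care is needed. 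Finally, tracking equality cases, a representative attains the minimum iff $s(\tilde\omega)-|\gamma|r_{\tilde\omega}(\tilde B)$ is constant and $\tilde B$ is dHYM with respect to $\tilde\omega$; integrating then forces the constant to be $c$, so when $c<\tfrac12$ the minimisers of $\operatorname{CVol}$ are exactly the solutions of~\eqref{modelEqIntro}.
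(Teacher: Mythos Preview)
Your argument is correct, but it takes a different route from the paper's. The paper simply completes the square: expanding $(s(\omega)-|\gamma|r_\omega(B)-c)^2$ and rearranging gives, for \emph{any} representative,
\[
\operatorname{CVol}(\omega^{\C})=\int_X\bigl(s(\omega)-|\gamma|r_\omega(B)-c\bigr)^2\omega^n+(1-2c)\,|\gamma|\,V_\omega(B)+c(2\hat{s}-c)[\omega]^n,
\]
the last term being purely topological. When $c<\tfrac12$ both variable terms are separately minimised at a solution of~\eqref{modelEqIntro}: the first is nonnegative and vanishes by the real part of~\eqref{dKYM} (together with the identity $r_\omega(B)=\Rea\,e^{-\ii\hat\theta}(\omega+\ii B)^n/\omega^n$ at a dHYM solution, which you also derive), while the second is bounded below by $(1-2c)|\gamma|W$ via the Jacob--Yau inequality.

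Your Cauchy--Schwarz argument reaches the same lower bound $c^2V+|\gamma|W$ (one checks these coincide via your identity $|\gamma|W=S-cV$), but trades the transparent ``sum of two nonnegative pieces'' for an analysis of the parabola $g(t)=\tfrac1V(S-t)^2+t$ and its monotonicity on $[\,|\gamma|W,\infty)$. The paper's decomposition is shorter and makes the role of the threshold $c<\tfrac12$ immediately visible as the sign of the coefficient $(1-2c)$; on the other hand, your approach yields the converse statement (minimisers are exactly solutions) with essentially no extra work, by tracking equality in Cauchy--Schwarz and in the Jacob--Yau bound.
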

\begin{proof}
By a straightforward computation, we have
\begin{align*}
\operatorname{CVol}(\omega^{\C}) &= \int_X \left( s(\omega) - |\gamma| r_{\omega}(B) - c\right)^2  \omega^n + |\gamma| V_{\omega}(B) \\
& + 2 c \int_X\left( s(\omega) - |\gamma| r_{\omega}(B)\right) \omega^n - c^2 \int_X \omega^n\\
&= \int_X \left( s(\omega) - |\gamma| r_{\omega}(B) - c\right)^2  \omega^n + (1-2c) |\gamma| V_{\omega}(B) +  c (2\hat{s} - c) [\omega]^n.
\end{align*}
Recall that \eqref{modelEqIntro} is equivalent to \eqref{dKYM}. The claim then follows from the fact that dHYM solutions are minimisers for $V_{\omega}(B)$, and that, at a dHYM solution, we have
\begin{equation*}
r_{\omega}(B) = \Rea e^{-\ii \hat{\theta}} \frac{(\omega + \ii B)^n}{\omega^n}.\qedhere
\end{equation*}   
\end{proof}
\begin{rmk} Suppose that $B = B(\omega)$ is a dHYM solution corresponding to $\omega$. Then, by \eqref{LargeVolumeExpansion}, there is a large volume limit expansion
\begin{equation*}
\operatorname{CVol}(\ii\omega + k^{-1}B) = \operatorname{CYM}(\omega, B) + O(k^{-1}),
\end{equation*}
where the Calabi-Yang-Mills functional was defined (in much greater generality) in \cite[Section 2.4]{GarciaFernandez_PHD} as
\begin{equation*}
\operatorname{CYM}(\omega, B) = \int_X \left(s(\omega) - \hat{\gamma} |B|^2_{\omega}\right) \omega^n + \hat{\gamma} \int_X |B|^2_{\omega}\,\omega^n.
\end{equation*}
\end{rmk}
\begin{rmk} The variational interpretation can be used more generally, by the trick of multiplying \eqref{modelEqIntro} by a suitably small real constant, see \cite[Remark 2.4.2]{GarciaFernandez_PHD}. 
\end{rmk}

\subsection{Existence results}\label{ResultsSec}
We can now state our existence results, which bear on the case when $X$ is toric. 

The toric case is relevant for mirror symmetry for Fano and log Calabi-Yau manifolds, see Section \ref{MirrorSec}. In \ref{MirrorConjectures} we discuss a possible interpretation of these results, in particular of the coupling constant $|\gamma|$, in terms of compactified Landau-Ginzburg models. Note that in the toric Fano case the mirror Landau-Ginzburg model can be described very explicitly, see e.g. \cite[Example 1.15]{GrossHackingKeel_LCY}.
\begin{thm}\label{MainThmIntro} Let $(X, [\omega])$ be a polarised K\"ahler toric manifold, with $\dim(X) \leq 3$, which is uniformly $K$-stable with constant $\lambda$. Then, for all sufficiently small $\epsilon > 0$, there is an open set of \emph{positive} (i.e. K\"ahler) $B$-field classes $\mathcal{B}_{\epsilon} \subset H^{1,1}(X, \R)$, containing a ball of radius $\kappa \epsilon \delta$ around $\kappa \epsilon [\omega]$ for some fixed constants $\kappa, \delta > 0$ (in the metric induced by $\omega$), such that, for $[B] \in \mathcal{B}_{\epsilon}$ and all coupling constants in the range  
\begin{equation*}
0 \leq |\gamma| < \frac{\hat{s}\,\lambda}{2(1-\lambda)} \epsilon^{-1},
\end{equation*}
the extremal scalar curvature equation with $B$-field \eqref{extrEqIntro} is solvable in the complexified K\"ahler class $[\omega^{\C}] = [\ii \omega + B]$.
\end{thm}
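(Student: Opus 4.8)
The plan is to construct solutions of \eqref{extrEqIntro} as small deformations of a cscK metric, paired with a solution of the dHYM equation for the (small) $B$-field, keeping track of all constants so as to read off the admissible range of $|\gamma|$.

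\textbf{Base points.} Since $(X,[\omega])$ is toric and uniformly $K$-stable with constant $\lambda$, by the toric Yau--Tian--Donaldson correspondence it carries a cscK metric $\omega_0\in[\omega]$, and uniform $K$-stability supplies the quantitative input we need: the (relative) Mabuchi functional of $[\omega]$ is $\lambda$-coercive, equivalently the Lichnerowicz operator at $\omega_0$ is invertible on the $L^2$-orthogonal complement of $\f{h}_0$, with a bound on its inverse in terms of $\lambda$ and $\hat s$. When $|\gamma|=0$, \eqref{extrEqIntro} is the extremal equation $s(\omega)\in\f{h}_0(\omega)$, solved by $\omega_0$; and for $[B]=\kappa\epsilon[\omega]$ exactly, the pair $\omega=\omega_0$, $B=\kappa\epsilon\,\omega_0$ solves \eqref{extrEqIntro} for every $|\gamma|$, since then $\omega^{\C}=(\ii+\kappa\epsilon)\omega_0$ and $(\omega^{\C})^n/\omega^n=(\ii+\kappa\epsilon)^n$ is constant. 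These are the points we deform from.

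\textbf{Reduction via dHYM.} For $[B]$ a sufficiently small positive class (of size $\epsilon$), the dHYM equation for $[B]$ with respect to any K\"ahler metric $\omega$ near $\omega_0$ is solvable: for $n=2$ this is the $J$-equation with its numerical criterion, and for $n=3$ one invokes the solvability theory for dHYM on threefolds --- this is exactly where $\dim X\le 3$ is used, together with the fact that a sufficiently small positive class automatically satisfies the relevant positivity conditions. Let $B(\omega)$ be the resulting dHYM representative; it depends smoothly on $\omega$, it is positive for $[B]$ close to $\kappa\epsilon[\omega]$, and by the expansion \eqref{LargeVolumeExpansion} one has $r_\omega(B(\omega))=\Rea\,\mathrm{e}^{-\ii\hat\theta}(\omega+\ii B(\omega))^n/\omega^n=1-\epsilon^2 h_\omega+O(\epsilon^3)$, with $h_\omega$ a smooth function of $\omega$ built from the harmonic representative of $[B]/\epsilon$ as in \eqref{LargeVolumeExpansion}, which is \emph{constant} when $[B]\in\R_{>0}[\omega]$. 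Substituting $B=B(\omega)$, the system \eqref{dKYM} equivalent to \eqref{extrEqIntro} collapses to the single twisted extremal equation
\begin{equation*}
s(\omega)-|\gamma|\,r_\omega(B(\omega))\in\f{h}_0(\omega),\qquad\text{i.e.}\qquad s(\omega)+\epsilon^2|\gamma|\,h_\omega+O(\epsilon^3|\gamma|)\in\f{h}_0(\omega),
\end{equation*}
the constant $|\gamma|$ being absorbed into the holomorphy potential; it is a perturbation of size $O(\epsilon^2|\gamma|)$ of the equation solved by $\omega_0$.

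\textbf{Deformation and the threshold.} I would now run the standard relative deformation argument: write $\omega=\omega_0+\ii\del\delbar\varphi$, project onto $\f{h}_0^{\perp}$, and solve by the implicit function theorem, using the invertibility from the first step (and Calabi's description of the kernel of the Lichnerowicz operator to deal with the extremal field, which moves continuously with the Futaki invariant $\m{F}_{[\omega^{\C}]}$). The admissible range of $|\gamma|$ is governed by comparing the $C^{k,\alpha}$-size of the twist $\epsilon^2|\gamma|\,h_\omega$ with the bound on the inverse of the linearisation: after tracing the constants ($\lambda$ from coercivity, $\hat s$ from the normalisation of the equation, and $1-\lambda$ measuring the distance to the destabilising direction), and after shrinking $[B]$ to a ball of radius $\kappa\epsilon\delta$ about $\kappa\epsilon[\omega]$ so that $h_\omega$ and the remainder stay suitably small, one obtains solvability precisely for $0\le|\gamma|<\tfrac{\hat s\lambda}{2(1-\lambda)}\epsilon^{-1}$. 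Equivalently, this is the regime in which the coupled relative Mabuchi-type functional of Section~\ref{KYMSec} --- whose critical points are solutions of \eqref{dKYM} and whose large-volume limit is the extremal K\"ahler--Yang--Mills functional of \eqref{extrKYMIntro} --- stays coercive, the stated bound being its coercivity threshold; in the toric setting this coercivity and the a priori estimates needed to deduce an actual solution from it follow from the combinatorics on the moment polytope.

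\textbf{Conclusion and the main difficulty.} Unwinding the reduction, a solution $\omega$ of the reduced equation together with $B=B(\omega)$ solves \eqref{extrEqIntro} in $[\omega^{\C}]=[\ii\omega+B]$; since dHYM solvability, positivity of $B(\omega)$, and the coercivity/invertibility used above are open conditions in $[B]$, the set $\mathcal{B}_\epsilon$ for which the argument works is open and contains the asserted ball. The main obstacle is the quantitative form of the deformation step: one must control $B(\omega)$ and its derivatives uniformly as $\omega$ ranges over the relevant family --- so that $h_\omega$ and the error terms have the claimed sizes --- and then extract the sharp threshold $\tfrac{\hat s\lambda}{2(1-\lambda)}\epsilon^{-1}$, which forces one to carry the uniform $K$-stability constant through the coupled estimates rather than treating the perturbation abstractly. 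The feedback of the $B$-field into the metric equation, through the $\omega$-dependence of $B(\omega)$, is what makes this delicate; the hypothesis $\dim X\le 3$ is precisely what renders the dHYM half of that coupling tractable.
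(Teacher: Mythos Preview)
Your overall plan---solve dHYM for $B(\omega)$, substitute, and treat the result as a twisted extremal problem perturbed from the cscK metric $\omega_0$---matches the paper's in spirit, but the mechanism you invoke for the quantitative threshold is not the one that actually produces it, and this is a genuine gap. An implicit function theorem around $\omega_0$ yields solvability for the twist in some ball whose radius depends on the norm of the inverse Lichnerowicz operator and on second-order error estimates; there is no reason this radius should equal $\tfrac{\hat s\lambda}{2(1-\lambda)}$, and ``tracing the constants'' does not make it so. (Indeed, your own expansion gives a perturbation of size $O(\epsilon^2|\gamma|)$, which would naively suggest an $\epsilon^{-2}$ threshold, not $\epsilon^{-1}$---already a sign that the constant is not arising the way you describe.) In the paper the threshold comes from a different place: one runs a \emph{continuity method} in $t\in[0,1]$, and closedness requires the Chen--Cheng a priori estimates, which in the toric setting reduce to an entropy bound that follows once the \emph{twisted} pair $(P,A(\omega))$ remains uniformly $K$-stable. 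The elementary combinatorial Lemma~\ref{lemma:unifKstab_alpha} shows exactly when this happens, and the bound $|\gamma|<\tfrac{\hat s\lambda}{2(1-\lambda)}\epsilon^{-1}$ is precisely the condition it gives once one inputs the uniform bound on the radius function. Your parenthetical remark about a ``coercivity threshold\dots from the combinatorics on the moment polytope'' is pointing at this, but it is not equivalent to the IFT argument you present as the main line; it is the argument.

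A second, related point: the restriction $\dim X\le 3$ is not primarily about dHYM \emph{solvability}. What the paper actually needs is the elementary pointwise bound on the twisting term---$0\le\Lambda_\omega B<\tan\hat\theta$ on surfaces (equation~\eqref{traceBound2d}), and its threefold analogue \eqref{BasicBound3d}, proved by a short algebraic argument on the eigenvalues of $\omega^{-1}B$ using positivity of $B$. These bounds supply the uniform $R$ in Lemma~\ref{lemma:unifKstab_alpha} and are what make the closedness estimates go through along the continuity path; they are only available in dimensions $\le 3$. The trigonometric identity \eqref{TrigId} (and its threefold version) rewriting the real part of the equation as a twist by $\Lambda_\omega B$ (resp.\ $\Lambda_\omega B - B^3/\omega^3$), together with the ansatz $B_t=\varepsilon(\omega_t+\delta\eta_t)$, are the other ingredients your sketch omits.
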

The main point is that we obtain quantitative control on the coupling constant $|\gamma|$. The limitation to surfaces and threefolds is due to special properties of the dHYM equation. 
\begin{cor}\label{MainCorIntro} Fix $0 < \gamma' < 1$. For all sufficiently large $k > 0$, choose $[B_k] \in \mathcal{B}_{k^{-1}}$ and let $\omega^{\C}_k = \ii \omega_k + B_k$ denote the solution of \eqref{modelEqIntro} with real coupling constant
\begin{equation*}
|\gamma_k| =  \frac{\hat{s}\,\lambda}{2(1-\lambda)} \gamma' k, 
\end{equation*}   
provided by Theorem \ref{MainThmIntro}. Then, the rescaled complexified form $\omega_k + \ii k B_k$ converges smoothly, in the large volume limit $k \to \infty$, to a solution of the extremal K\"ahler-Yang-Mills system \eqref{extrKYMIntro}, with coupling constant
\begin{equation*}
\hat{\gamma} = \frac{\hat{s}\,\lambda}{2(1-\lambda)}\gamma'. 
\end{equation*} 
\end{cor}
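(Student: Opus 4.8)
The plan is to combine the uniform existence statement of Theorem~\ref{MainThmIntro} with the large volume expansion \eqref{LargeVolumeExpansion} and a standard implicit function theorem / elliptic bootstrap argument, tracking the dependence of all constants on $k$. First I would set $\epsilon = k^{-1}$ and pick $[B_k] \in \mathcal{B}_{k^{-1}}$; by construction $[B_k]$ lies within distance $\kappa k^{-1}\delta$ of $\kappa k^{-1}[\omega]$, so after rescaling, the class $[k B_k]$ lies in a fixed bounded neighbourhood of $\kappa[\omega]$ in $H^{1,1}(X,\R)$, independently of $k$. With the stated choice $|\gamma_k| = \tfrac{\hat{s}\lambda}{2(1-\lambda)}\gamma' k$ we have $|\gamma_k| < \tfrac{\hat{s}\lambda}{2(1-\lambda)}k = \tfrac{\hat{s}\lambda}{2(1-\lambda)}\epsilon^{-1}$ since $\gamma' < 1$, so Theorem~\ref{MainThmIntro} applies and produces a solution $\omega^{\C}_k = \ii\omega_k + B_k$ of \eqref{modelEqIntro}. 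Moreover $k^{-1}|\gamma_k| = \hat{\gamma} := \tfrac{\hat{s}\lambda}{2(1-\lambda)}\gamma'$ is \emph{constant} in $k$, which is exactly the regime in which the formal limit \eqref{LargeVolumeEquations}--\eqref{KYMIntro} is the extremal K\"ahler-Yang-Mills system with coupling $\hat{\gamma}$.

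Next I would rewrite the equation satisfied by the rescaled form. Writing $\tilde{B}_k = k B_k$ and $\tilde\omega_k = \omega_k$, equation \eqref{modelEqIntro} for $\omega^{\C}_k = B + \ii k \omega$ becomes, as in \ref{KYMSec}, the system \eqref{LargeVolumeEquations} with the substitution $|\gamma_k| \mapsto k\hat{\gamma}$, i.e.
\begin{equation*}
\begin{dcases}
-\hat{\gamma}\,(\Lambda_{\tilde\omega_k}\tilde B_k - z_k) + O(k^{-2}) = 0\\
s(\tilde\omega_k) + \hat{\gamma}\,(\Lambda^2_{\tilde\omega_k}(\tilde B_k\wedge\tilde B_k) - z_k \Lambda_{\tilde\omega_k}\tilde B_k) + O(k^{-2}) = \hat\xi_k,
\end{dcases}
\end{equation*}
where the $O(k^{-2})$ terms and their derivatives are controlled uniformly, by \eqref{LargeVolumeExpansion}, once one knows a priori bounds on $(\tilde\omega_k,\tilde B_k)$, and $\hat\xi_k \in \f{h}_0(\tilde\omega_k)$ is the extremal potential, with $z_k \to \hat{z}$ as the classes $[\tilde B_k]\to$ a limit class in $\mathcal{B}_\infty$. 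The key analytic input is uniform a priori $C^\infty$-estimates, independent of $k$, for the solutions $(\tilde\omega_k,\tilde B_k)$: these should follow from the toric structure (reducing the PDEs to real Monge--Ampère-type equations on the moment polytope, where one has convexity and the uniform $K$-stability bound $\lambda$ at one's disposal, exactly as in the proof of Theorem~\ref{MainThmIntro}), together with the fact that the $O(k^{-2})$ perturbations are small in every $C^\ell$-norm. Granting these, Arzel\`a--Ascoli extracts a subsequential smooth limit $(\tilde\omega_\infty,\tilde B_\infty)$, and passing to the limit in the displayed system kills the error terms and yields precisely \eqref{extrKYMIntro} with coupling $\hat\gamma$; the limit potential $\hat\xi_\infty$ lies in $\f{h}_0(\tilde\omega_\infty)$ because the extremal fields are determined a priori by the (continuous in the class) Futaki invariant $\m{F}_{[\omega^{\C}]}$.

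Finally I would upgrade subsequential convergence to full convergence. This is where uniqueness enters: solutions of the extremal K\"ahler-Yang-Mills system within a fixed class, modulo automorphisms, should be unique (at least near the model cscK solution from which Theorem~\ref{MainThmIntro} perturbs), so every subsequential limit coincides, and hence the whole sequence converges. Alternatively, and more robustly, one runs the argument as a genuine implicit function theorem: linearise the rescaled system \eqref{LargeVolumeEquations} at the (unperturbed, $k=\infty$) extremal KYM solution, observe that the linearisation is the same elliptic operator whose invertibility (transverse to the automorphism obstruction) underlies Theorem~\ref{MainThmIntro}, with uniformly bounded inverse thanks to uniform $K$-stability, and conclude that for $k$ large there is a unique solution of the $k$-perturbed system $O(k^{-2})$-close to the limit — which by the uniqueness clause in Theorem~\ref{MainThmIntro} must be the rescaling of $\omega^{\C}_k$. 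The main obstacle is the uniformity of the estimates in $k$: one must ensure that the constants coming from the dHYM part of the system (hence the restriction $\dim X \le 3$) and from the toric Monge--Ampère estimates do not degenerate as $|\gamma_k| \sim k \to \infty$, and it is precisely the linear-in-$\epsilon^{-1}$ admissible range for $|\gamma|$ in Theorem~\ref{MainThmIntro} that makes $k^{-1}|\gamma_k|$ bounded and renders this possible.
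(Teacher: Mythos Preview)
Your overall strategy is correct and matches the paper's: combine the formal large volume expansion from Section~\ref{KYMSec} with uniform-in-$k$ estimates coming from the proof of Theorem~\ref{MainThmIntro}, then pass to the limit. However, the paper's argument is considerably shorter because the uniform estimates are not obtained abstractly from the rescaled system, as you attempt. Instead, they are built into the construction: the solutions in Theorem~\ref{MainThmIntro} are produced via the ansatz $B_t = \varepsilon(\omega_t + \delta\eta_t)$, and the crucial observation (stated explicitly at the end of Section~\ref{C0method2d}) is that $\delta$ may be chosen \emph{independently of sufficiently small $\varepsilon$}. Setting $\varepsilon \sim k^{-1}$, the rescaled form $kB_k$ is then $\omega_k + \delta\eta_k$ with $\omega_k,\eta_k$ already uniformly controlled by the toric estimates of Section~\ref{ToricEstimatesSec}. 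This bypasses your hand-wave ``should follow from the toric structure'' and gives the required bounds immediately.

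Your additional discussion of upgrading subsequential to full convergence (via uniqueness or a second IFT) is reasonable but goes beyond what the paper actually proves; the paper simply asserts smooth convergence without distinguishing subsequences. If you want to make this rigorous, the cleaner route is again to exploit the specific construction: for each $k$ the solution is obtained by the same continuity path starting at the fixed cscK metric $\omega_0$, with estimates uniform in $\varepsilon = k^{-1}$, so the whole family depends continuously on $k^{-1}$ and converges as $k^{-1}\to 0$ without any appeal to uniqueness of the limiting extremal K\"ahler--Yang--Mills metric.
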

Theorem \ref{MainThmIntro} and Corollary \ref{MainCorIntro} are proved in Section \ref{SurfacesSec} (for surfaces) and Section \ref{ThreefoldsSec} (for threefolds), relying on estimates established in Section \ref{ToricEstimatesSec}, based on the theory developed by Chen and Cheng \cite{ChenCheng_estimates}. These results provide basic examples of extremal metrics with $B$-field, and of their large volume behaviour, on cscK toric manifolds. We note that, in particular, \emph{Corollary \ref{MainCorIntro} gives new examples of extremal K\"ahler-Yang-Mills metrics} (cf. \cite[Theorem 4.17]{AlvarezGarciaGarcia_KYM}).
\begin{rmk}
Note that uniform $K$-stability implies the existence of a cscK metric $\omega_0 \in [\omega]$ (see Section \ref{ToricEstimatesSec}). In particular, in the situation of Theorem \ref{MainThmIntro}, we have $\m{F}_{[\omega]}(0) = 0$. Among the extremal metrics with $B$-field (i.e. solutions of \eqref{extrEqIntro}) constructed in Theorem \ref{MainThmIntro}, standard arguments show that the locus of cscK metrics with $B$-field (solutions of \eqref{modelEqIntro}) is given by
\begin{equation*}
\mathcal{B}_{\epsilon}\,\cap \{\m{F}_{[\omega^{\C}], |\gamma|} = 0\} = \mathcal{B}_{\epsilon}\,\cap \{\m{F}'_{[\omega^{\C}]} = 0\}.  
\end{equation*}
However, at present, we do not have an effective characterisation of the locus $\{\m{F}'_{[\omega^{\C}]} = 0\}$. The same problem, in the large volume limit, appears in \cite[Section 4.5]{AlvarezGarciaGarcia_KYM}. For example, in the case when $X$ is a surface, one can show that, for a \emph{fixed} class $[\omega]$, the point $[B] = [\omega]$ is a critical point of the function 
\begin{equation*}
H^{1,1}(X, \R) \ni B \mapsto \m{F}'_{[\ii \omega + B]} \in \f{h}^{\vee}_0,
\end{equation*}
and that the Hessian at this critical point can be identified with the bilinear form on the space of $\omega$-harmonic forms $\mathcal{H}^{1,1}_{\omega}(X, \R)$ given by
\begin{equation*}
D^2(\m{F}'_{[\ii \omega + B]})|_{[B]=[\omega]} [\eta_1, \eta_2] (V) = \int_{X} f(V, \omega)\,\eta_1 \wedge \eta_2.  
\end{equation*}
\end{rmk}

For arbitrary dimension, we only have a much weaker result than Theorem \ref{MainThmIntro}.
\begin{prop}
The statement of Theorem \ref{MainThmIntro} remains valid, with $X$ toric of arbitrary dimension, for some $|\gamma| > 0$.
\end{prop}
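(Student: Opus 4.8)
The plan is to run an implicit function theorem argument, perturbing off a cscK metric in $[\omega]$, with $|\gamma| \geq 0$ and the size of the $B$-field class as small parameters. Having given up the quantitative control on $|\gamma|$ of Theorem~\ref{MainThmIntro}, one no longer needs the Chen--Cheng a priori estimates, and a soft perturbation argument suffices; the price is that nothing is obtained for large $|\gamma|$, and that the admissible set of $B$-field classes is only a small neighbourhood of a small multiple of $[\omega]$. First I would fix the base point: since $(X,[\omega])$ is uniformly $K$-stable, $[\omega]$ contains a cscK metric $\omega_0$ (as recalled in the Remark following Theorem~\ref{MainThmIntro}), and I would fix a maximal compact subgroup $K \subset \operatorname{Aut}(X)$ for which $\omega_0$ is $K$-invariant. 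As $K$ is connected it acts trivially on $H^{1,1}(X,\R)$, so every $B$-field class is $K$-invariant, and by uniqueness the dHYM representative of a $K$-invariant class with respect to a $K$-invariant metric is again $K$-invariant; accordingly the whole construction is carried out in $K$-invariant Hölder spaces. Working in the real formulation \eqref{dKYM} of \eqref{modelEqIntro} together with its extremal variant, the problem is to find a $K$-invariant pair $(\omega, B)$ with $\omega \in [\omega]$ and $[B]$ prescribed, solving
\begin{equation*}
\begin{dcases}
\Imm \mrm{e}^{-\ii\hat{\theta}}(\omega + \ii B)^n = 0,\\
s(\omega) - |\gamma|\,\Rea\,\mrm{e}^{-\ii\hat{\theta}}\frac{(\omega+\ii B)^n}{\omega^n} = \xi \in \f{h}_0(\omega).
\end{dcases}
\end{equation*}

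The key observation is that at $|\gamma| = 0$ this system decouples and, in the $(\omega,B)$ directions, linearises to a block-triangular operator with invertible blocks. For the first (dHYM) equation: $B = 0$ solves it when $[B] = 0$, and its linearisation in $B$ at $B = 0$ is the Laplacian $\Delta_{\omega_0}$, invertible on $K$-invariant functions orthogonal to the constants; hence the standard dHYM perturbation argument — valid in arbitrary dimension precisely because one stays near $B = 0$ — yields a unique dHYM solution $B = B(\omega,[B])$, depending smoothly on $\omega$ near $\omega_0$ and on the small class $[B]$. Substituting into the second equation leaves $s(\omega) - |\gamma|\,(\cdots) = \xi \in \f{h}_0(\omega)$, solved at $|\gamma| = 0$ by $\omega_0$ (with $\xi = \hat{s}$); the linearisation of this extremal problem at a cscK metric, restricted to $K$-invariant data, is an isomorphism onto the $K$-invariant functions, by the deformation theory of extremal Kähler metrics of LeBrun--Simanca — this is exactly the reason one must solve the extremal equation \eqref{extrEqIntro} rather than \eqref{modelEqIntro} when $\operatorname{Aut}(X)$ is nontrivial. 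Since the dependence of the second equation on $B$ comes with a factor $|\gamma|$, the off-diagonal block vanishes in the linearisation at $|\gamma| = 0$, so the full linearised operator is invertible; the implicit function theorem then produces, for every sufficiently small $|\gamma| \geq 0$ (in particular for some $|\gamma| > 0$) and every $[B]$ in a neighbourhood of a small multiple $\kappa\epsilon[\omega]$ of $[\omega]$, a $K$-invariant solution of the system, hence of \eqref{extrEqIntro}. The $\omega_0$-harmonic representative of $\kappa\epsilon[\omega]$ is $\kappa\epsilon\omega_0$, which is Kähler, and the Kähler cone is open, so this neighbourhood may be taken to consist of positive $B$-field classes, yielding the sets $\mathcal{B}_\epsilon$ (with a ball around $\kappa\epsilon[\omega]$) as in Theorem~\ref{MainThmIntro}.

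The main obstacle is the extremal-deformation step: establishing that the linearised extremal operator at $\omega_0$ is surjective onto the $K$-invariant functions, and organising the coupled problem so that this can be exploited — one must check that $B = B(\omega,[B])$ is a well-defined smooth map between the relevant $K$-equivariant Hölder spaces, that all the objects involved can consistently be kept $K$-invariant, and that the linearisation at $|\gamma| = 0$ is genuinely block-triangular. By contrast, the toric hypothesis enters here only through the existence of the cscK metric $\omega_0$; it is the quantitative range of $|\gamma|$ and the removal of the smallness restriction on $[B]$ that force the toric analysis of Sections~\ref{ToricEstimatesSec}--\ref{ThreefoldsSec} and the restriction to $\dim X \leq 3$.
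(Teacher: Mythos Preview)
Your proposal is correct and follows essentially the same strategy as the paper: an implicit function theorem perturbation off the cscK metric $\omega_0$, with the $B$-field class and the coupling $|\gamma|$ as small parameters. The paper's proof is a one-line reference to the openness result of Section~\ref{opennessSec}, which is carried out in the toric framework on the polytope: one linearises Abreu's equation $-u^{ij}_{,ij}=A$ and uses that $DQ_0$ is surjective onto $\mrm{Aff}(P)^\perp$ (Lemma~\ref{lem:Qinfty_invert}), while the dHYM block is handled by the large-volume expansion $Q_{\varepsilon,t}=Q_{0,t}+O(\varepsilon)$.

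The genuine difference is in the choice of framework for the extremal-deformation step. The paper works entirely with symplectic potentials on $P$, so the torus symmetry is built in and the relevant cokernel is visibly the space of affine-linear functions. You instead invoke the LeBrun--Simanca deformation theory in $K$-invariant H\"older spaces on $X$, which is more general: as you correctly note, the toric hypothesis then enters only through the existence of $\omega_0$, so your argument would go through for any compact cscK manifold. The paper's toric formulation has the advantage of being the natural setting in which the closedness estimates of Section~\ref{ToricEstimatesSec} are proved, so the openness proof there meshes directly with the continuity method of Sections~\ref{SurfacesSec}--\ref{ThreefoldsSec}; your formulation trades this for wider applicability at the cost of importing the LeBrun--Simanca machinery.
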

This follows at once from the openness result discussed in Section \ref{opennessSec}.

We also show, in Section \ref{KUnstSec}, that the cscK equation with a $B$-field may be solvable in a $K$-unstable K\"ahler class on a toric, K\"ahler-Einstein Fano (for which a cscK representative does not exist). 
\begin{thm}\label{UnstableThmIntro}  Consider the toric Fano K\"ahler-Einstein threefold 
\begin{equation*}
X = \PP(\mathcal{O}\oplus\mathcal{O}(1,-1)) \to \PP^1 \times \PP^1.
\end{equation*}
Then, for all $|\gamma| > 0$, there exist $K$-unstable K\"ahler classes $[\omega]$ and $B$-field classes $\mathcal{B}_{[\omega], |\gamma|} \subset H^{1,1}(X, \R)$, such the the cscK equation with $B$-field \eqref{modelEqIntro} is solvable for $[\omega^{\C}] = [\ii \omega + B]$, $[B]\in \mathcal{B}_{[\omega], |\gamma|}$.  
\end{thm}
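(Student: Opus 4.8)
The plan is to exploit the freedom in the $B$-field and the coupling constant to turn the cscK equation with $B$-field into a perturbation of a genuinely solvable equation, exactly as in Theorem \ref{MainThmIntro}, but now starting from the \emph{Kähler-Einstein} metric on $X$ rather than from a cscK metric in a $K$-unstable class. The key observation is that on the toric Fano threefold $X = \PP(\mathcal{O}\oplus\mathcal{O}(1,-1)) \to \PP^1\times\PP^1$ the anticanonical class does carry a cscK (indeed Kähler-Einstein) metric $\omega_{\mathrm{KE}}$, and $X$ has no continuous automorphisms preserving a torus-invariant metric beyond the torus itself, so the Futaki-type obstruction $\m{F}'_{[\omega^{\C}]}$ can be controlled torically. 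First I would recall that the anticanonical class is $K$-polystable (not uniformly $K$-stable, but that is irrelevant here since we work near a fixed KE metric), and that there exist nearby $K$-\emph{unstable} Kähler classes $[\omega]$ — e.g. classes close to a boundary face of the Kähler cone, or classes detected by the toric Futaki invariant $\m{F}_{[\omega]}\not\equiv 0$ — in which no cscK representative exists. This is a standard feature of such $\PP^1$-bundles and can be checked by the Futaki integral on the moment polytope.

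Next I would set up the perturbation. Fix a $K$-unstable Kähler class $[\omega]$ near $[\omega_{\mathrm{KE}}]$, and consider the equation \eqref{dKYM}, equivalently \eqref{modelEqIntro}, with $B$-field class $[B]$ to be chosen in a small ball around $\epsilon[\omega_{\mathrm{KE}}]$ and with $|\gamma|$ of size $\epsilon^{-1}$, so that the product $|\gamma|\cdot(\text{size of }B)$ stays bounded — this is the same scaling as in Corollary \ref{MainCorIntro}, where the relevant effective coupling is $\hat\gamma$. In this regime, by the expansions \eqref{LargeVolumeExpansion}, the equation is a small deformation of the decoupled system: dHYM forces $\Delta_\omega B = 0$ at leading order, while the scalar-curvature equation becomes $s(\omega) + \hat\gamma(\Lambda^2_\omega(B\wedge B) - z\Lambda_\omega B) = \hat c$. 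The point is that the $B$-dependent term acts as a \emph{twist} of the cscK equation, and for a suitable choice of positive harmonic $B$ (a multiple of $\omega$, say, up to toric corrections) this twisting term can cancel the obstruction carried by $\m{F}_{[\omega]}$. Concretely, one picks $[B]$ so that the twisted Futaki invariant $\m{F}_{[\omega]} + \hat\gamma\,\m{F}'_{[\ii\omega+B]}$ vanishes; since $\m{F}'$ varies nontrivially with $[B]$ (its leading behaviour near $[B]=[\omega]$ is governed by the Hessian displayed in the Remark following Corollary \ref{MainCorIntro}, which on this toric threefold is nondegenerate in the relevant directions), an implicit function / intermediate value argument in the finite-dimensional space $\mathcal{H}^{1,1}_\omega(X,\R)$ produces such a $[B]$ for every $|\gamma|>0$.

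The analytic heart is then to upgrade this formal cancellation to an actual solution. For this I would run the Chen-Cheng a priori estimates \cite{ChenCheng_estimates} for the twisted cscK equation, exactly as in Section \ref{ToricEstimatesSec}, combined with the continuity method / Legendre-transform techniques on the moment polytope that are available in the toric setting: because the torus-invariant twisted cscK equation on a toric manifold reduces to a real Monge-Ampère-type equation for the symplectic potential, and because we have arranged the obstruction to vanish, the Donaldson-type existence machinery applies and yields a torus-invariant solution $(\omega, B)$ of \eqref{dKYM}, hence of \eqref{modelEqIntro}. The set $\mathcal{B}_{[\omega],|\gamma|}$ of admissible $B$-field classes is then the (open, nonempty) set of $[B]$ near $\epsilon[\omega_{\mathrm{KE}}]$ for which both the openness input of Section \ref{opennessSec} and the twisted-obstruction-vanishing condition hold.

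The main obstacle I expect is \emph{not} the analysis — Chen-Cheng plus toric reduction is robust — but rather verifying that the twisted obstruction can genuinely be killed by a \emph{positive} $B$-field class while keeping $[\omega]$ $K$-unstable, i.e. that the two conditions "$\m{F}_{[\omega]}\not\equiv 0$" and "$\m{F}_{[\omega]} + \hat\gamma\,\m{F}'_{[\ii\omega+B]} \equiv 0$ for some positive $[B]$ and some prescribed $|\gamma|$" are simultaneously satisfiable on this specific threefold. This is a concrete but delicate computation on the moment polytope of $\PP(\mathcal{O}\oplus\mathcal{O}(1,-1))\to\PP^1\times\PP^1$: one must compute the toric Futaki functional as a linear form on the space of affine functions, compute the derivative of $\m{F}'$ in the $B$-directions, and check that the image is large enough to hit $-\hat\gamma^{-1}\m{F}_{[\omega]}$ within the positive cone, for all $|\gamma|>0$ (which forces a careful choice of how $[\omega]$ and the size of $[B]$ depend on $|\gamma|$). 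Once this linear-algebraic compatibility is established on the polytope, the rest of the argument is a routine combination of the tools already assembled in the paper.
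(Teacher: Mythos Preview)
Your proposal has a genuine gap at the analytic step. You write that once the twisted Futaki obstruction $\m{F}_{[\omega]} + \hat\gamma\,\m{F}'_{[\ii\omega+B]}$ is arranged to vanish, ``the Donaldson-type existence machinery applies''. But vanishing of the Futaki character is only a necessary condition; both Donaldson's toric existence theory and the Chen--Cheng estimates of Section~\ref{ToricEstimatesSec} require \emph{uniform $K$-stability} of the pair $(P, A(\omega))$ (see Proposition~\ref{prop:entropy_estimate} and Lemma~\ref{lemma:unifKstab_alpha}), which is a far stronger coercivity condition. Since you have deliberately chosen $[\omega]$ to be $K$-unstable, the entropy bound and the continuity argument of Section~\ref{ToricEstimatesSec} are simply unavailable, and you provide no substitute mechanism to prevent a sequence of approximate solutions from degenerating. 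A related minor point: the Hessian formula you invoke from the Remark after Corollary~\ref{MainCorIntro} is stated there only for \emph{surfaces}, so it cannot be quoted directly for this threefold.

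The paper's proof proceeds along an entirely different route which sidesteps stability altogether. It restricts to the \emph{Calabi ansatz} on the ruled threefold, so that both the dHYM and scalar curvature equations reduce to second-order linear ODEs for the momentum profile on an interval, with overdetermined boundary conditions. Keller and T{\o}nnesen-Friedman had already shown that, for explicit K\"ahler parameters $x_1 = \tfrac{1}{2}$, $x_2 = -\tfrac{3}{4}$ (a class admitting no cscK metric), the large-volume-limit K\"ahler--Yang--Mills system~\eqref{KYM_example} has a positive solution $\phi(\tau)$. The paper then perturbs this explicit solution: taking $B_\varepsilon = \varepsilon(\gamma_{a,b} + \ii\partial\bar\partial g(s))$, the dHYM equation becomes a linear ODE for $g$, solvable for small $\varepsilon$, and the scalar curvature equation becomes a linear ODE for a momentum profile $\phi_\varepsilon = \phi + \varepsilon\psi_\varepsilon$, whose positivity and boundary conditions follow by continuity from those of $\phi$. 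No $K$-stability input, no Chen--Cheng estimates, and no Futaki cancellation are used; the argument is a direct ODE perturbation around a known explicit solution in a cohomogeneity-one setting.
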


\begin{rmk}
The solutions of \eqref{extrEqIntro} on toric surfaces and threefolds given by Theorem \ref{MainThmIntro} are torus-invariant. The question of the \emph{uniqueness} of these solutions (in their classes) is still open. In a recent preprint \cite{Scarpa_uniqueness} the first-named author shows that, on toric manifolds of arbitrary dimension, the uniqueness of torus-invariant solutions of \eqref{modelEqIntro} holds under a \emph{supercritical phase condition}, but the classes we consider in Theorem \ref{MainThmIntro} do not satisfy this hypothesis for $\dim X=3$. In the two-dimensional case instead the results of \cite{Scarpa_uniqueness} apply directly. Hence, on a $K$-stable surface for which $\mathcal{F}'_{[\omega^{\mathbb{C}}]}=0$, the solutions given by Theorem \ref{MainThmIntro} are unique in their classes, up to the action of the reduced automorphism group of $X$.
\end{rmk}
 
\subsection{Conjectures} 

\subsubsection{Mirror pairs and $K$-stability}\label{MirrorConjectures}

Our main speculation here concerns the question of whether mirror symmetry for Fano pairs, after suitable compactification, can be made compatible with versions of $K$-stability and the corresponding partial differential equations, in particular the cscK equation with $B$-field \eqref{modelEqIntro}. As will be clear from the discussion, it is indeed necessary to allow a nontrivial $B$-field in general.

Let 
\begin{equation}\label{MirrorPair}
(X, [\omega^{\C}_X], s_X \in H^0(K^{-1}_X))\,|\,((Y, w), [\omega^{\C}_Y], \Omega_Y\in H^0(K_Y))
\end{equation}
be a mirror Fano pair as in \ref{MirrorSec} above (see \cite[Section 2.1]{KatzKontPant} for full details). Following \cite[Section 2.1]{KatzKontPant}, we consider a \emph{tame} compactification $f\!: Z \to \PP^1$ of the map $w\!: Y \to \C$, where $Z$ is a smooth projective variety containing $Y$ as the complement of a suitable divisor (see \cite[Definition 2.4]{KatzKontPant}). Such compactifications have been studied in depth. For example, if $X$ is a smooth Fano threefold, a Calabi-Yau compactified Landau-Ginzburg model exists \cite{Przyjalkowski_CYLG}, i.e., for suitable K\"ahler parameters on $X$, we can choose $f\!: Z \to \PP^1$ to be a Calabi-Yau fibration, with $-K_{Z} \sim f^{-1}(\infty)$. This compactification is only unique up to birational equivalence. 

Assume that the complexified K\"ahler class on $Y$ is in fact a real K\"ahler class $[\omega_Y]$.  
\begin{question}\label{LGquestion} Characterise mirror pairs \eqref{MirrorPair}, for which the Landau-Ginzburg model $((Y, w), [\omega_Y], \Omega_Y)$ has the following stability property: a tame compactification $(Z, f)$ exists, together with a K\"ahler class $[\omega_Z]$ restricting to $[\omega_Y]$, such that the polarised fibration
\begin{equation*}
f\!: (Z, [\omega_Z]) \to (\PP^1, [\eta])
\end{equation*}    
is (uniformly) $K$-stable (or log $K$-stable, with respect to the compactification divisor $D_Z \subset Z$), for some K\"ahler class $[\eta]$ (see \cite{DervanRoss_stablemaps}).   
\end{question}
Note that $K$-stability is invariant under overall scaling, i.e. $f\!: (Z, [\omega_Z]) \to (\PP^1, [\eta])$ is $K$-stable iff $f\!: (Z, k[\omega_Z]) \to (\PP^1, [\eta])$ is, for all $k > 0$. On the other hand, the mirror correspondence is not scale invariant, and  
\begin{equation*}
(X_k, [\omega^{\C}_{X_k}], s_{X_k})\,|\,((Y, w), k[\omega_Y], \Omega_Y),\,k \to \infty
\end{equation*}
gives a sequence of mirror pairs with $(X_k, s_{X_k})$ approaching a limit point in the space of complex structures.

Note also that geometric characterisations of (uniform, log) $K$-stability for Calabi-Yau fibrations are known, at least in the \emph{adiabatic limit} when the volume of the fibres, with respect to $[\omega_Z]$, is sufficiently small \cite{Hattori_CYfibrations}; these would apply to suitable polarisations on $(Z, f)$. 
\begin{question}\label{FanoQuestion} Let 
\begin{equation*}
(X, [\omega^{\C}_X], s_X)\,|\,((Y, w), [\omega_Y], \Omega_Y)
\end{equation*}
be a mirror Fano pair, such that the polarisation on $Y$ is real. Suppose $((Y, w), [\omega_Y], \Omega_Y)$ is stable (or log stable) in the sense of Question \ref{LGquestion}. Is it possible to solve the cscK (more generally, extremal) equation with $B$-field \eqref{modelEqIntro} on $(X, [\omega^{\C}_X])$ (respectively, with suitable cone angle along $D_X = \dv(s_X)$), for some range of the coupling constant $|\gamma|$? Is this true at least in more restrictive situations: for example, for Calabi-Yau compactifications, and for $(X_k, [\omega^{\C}_{X_k}], s_{X_k})$, with $k$ sufficiently large?  
\end{question} 
Recall that, according to \cite[Conjecture 1.6]{DervanRoss_stablemaps}, the (uniform) $K$-stability of the fibration $f\!: (Z, [\omega_Z]) \to (\PP^1, [\eta])$ is conjecturally equivalent to the solvability of the equation
\begin{equation}\label{RossDervanEqu}
s(\omega_Z) - \Lambda_{\omega_Z}f^*\eta = c_Z.
\end{equation}
So, from the differential-geometric viewpoint, we are asking for a possible relation between solutions of \eqref{RossDervanEqu} and of the equation
\begin{equation*}
s(\omega_X) + \gamma \frac{(\omega^{\C}_X)^n}{(\omega_X)^n} = c_X.
\end{equation*}
\begin{exm} Let $(X, [\omega^{\C}_X = \ii \omega_X + B_X], s_X)$ be a smooth del Pezzo surface endowed with a complexified K\"ahler class and an anticanonical section $s_X$ such that $D_X = \dv(s_X)$ is smooth. Then, under some assumptions, there exists a smooth rational elliptic surface $f\!: Z \to \PP^1$, with a K\"ahler class $[\omega_Z]$, such that the mirror Landau-Ginzburg model is given by
\begin{equation*}
\big(Y = Z \setminus f^{-1}(\infty), [\omega_Z]|_{Y}, \Omega_Y = s^{-1}_{f^{-1}(\infty)}\big),
\end{equation*}
see \cite[Sections 0.5.2, 0.5.3]{GrossHackingKeel_LCY}; \cite{CollinsJacobLin_SYZ} proves a large part of the Strominger-Yau-Zaslow conjecture in this case. Suppose we choose a sequence $(X_k, [\omega^{\C}_{X_k}], s_{X_k})$ so that the corresponding polarisation $k[\omega_{Z,k}]$, on the fixed elliptic fibration $f\!:Z\to\PP^1$, is real and approaches the \emph{rescaled} adiabatic limit, in which the volume of the fibres is fixed, while the volume of the base blows up. 

By \cite[Sections 0.5.2]{GrossHackingKeel_LCY}, we have a relation
\begin{equation*}
S' \ni j_k = \exp\big(-2\pi k [\omega_{Z,k}]\big),
\end{equation*}
where $S'$ is a partially compactified versal family of $X\setminus D_X$. As $k \to \infty$, the limit converges to a point of $S'$ corresponding to a singular del Pezzo, with Gorenstein SLC singularities.

On the other hand, according to \cite[Corollary H]{Hattori_CYfibrations}, the $K$-stability of the polarised elliptic fibration $f\!: (Z, [\omega_{Z,k}]) \to \PP^1$, nearby the adiabatic limit, is determined by the singularities of the fibres. In particular if all fibres are reduced then the fibration is $K$-stable for all sufficiently large $k$. 

So, if some correspondence as in Question \ref{FanoQuestion} holds, then we would obtain cscK metrics, \emph{in general with nonvanishing $B$-field}, on the del Pezzo surfaces $(X_k, [\omega^{\C}_{X_k}])$, for all large $k$, i.e. as $(X_k, [\omega^{\C}_{X_k}])$ acquires certain Gorenstein SLC singularities. 
\end{exm}
\begin{exm} Consider $(X, [\omega^{\C}_X = \ii \omega_X + B_X], s_X)$ with $X$ a smooth toric Fano threefold. Let $((Z, f), [\omega_Z])$ be a Calabi-Yau compactification of its Landau-Ginzburg model $((Y, w), [\omega_Y], \Omega_Y)$ (itself a partial compactification of Givental's map $w\!: (\C^*)^3 \to \C$, depending on $s_X$, see \cite[Section 2.1]{KatzKontPant}). Note that, since $X$ is rigid, varying the K\"ahler class $[\omega_Z]$ corresponds to varying the section $s_X$. Let us choose $[\omega_Z]$ sufficiently close to the adiabatic limit. It seems reasonable that some analogue of the results of \cite{Hattori_CYfibrations} for rational elliptic surfaces holds in this case, so that if the singularities of $f\!: Z \to \PP^1$ are sufficiently generic then it is $K$-stable with respect to $[\omega_Z]$. 

Through some correspondence as in Question \ref{FanoQuestion}, this predicts results such as Theorem \ref{UnstableThmIntro}: even if $(X, [\omega_X])$ is $K$-unstable, so the fibres of $f\!: Z \to \PP^1$ are too singular, allowing a sufficiently large $B$-field class, with respect to a fixed $|\gamma|>0$, corresponds to a large deformation of $f$, which will have sufficiently regular fibres.  

Note that \cite[Theorem 3, Corollary 9]{SchlitzerStoppa_examples} provide an analogue of Theorem \ref{UnstableThmIntro} for complexified K\"ahler classes on the toric del Pezzo surface $\mathbb{F}_1 \cong \PP_{\PP^1}(\mathcal{O}\oplus\mathcal{O}(1))$ (which is $K$-unstable with respect to \emph{all} $[\omega]$). The solutions of \eqref{modelEqIntro} constructed in that case have conical singularities along (components of) the toric boundary $D_{\mathbb{F}_1}$.

In particular, \cite[Theorem 3, Corollary 9]{SchlitzerStoppa_examples} shows that, for all $[\omega]$, the cscK equation with $B$-field \eqref{modelEqIntro} is solvable on $(\mathbb{F}_1, [\omega^{\C} = \ii \omega + B])$, for some sufficiently large $|\gamma|[B]$, with a maximal cone angle along $D_{\mathbb{F}_1}$ which \emph{only depends (explicitly) on $[\omega]$, not on $[B]$}. This seems compatible with the statement that the compactified elliptic fibration $f\!: Z \to \PP^1$, mirror to $(\mathbb{F}_1, [\omega], D_{\mathbb{F}_1})$ is log $K$-unstable, with respect to some maximal angle depending on $[\omega]$, but becomes stable for this same angle after a sufficiently large deformation of complex structure.

Theorem \ref{MainThmIntro} and Corollary \ref{MainCorIntro} would have similar interpretations on the Landau-Ginzburg side: they are consistent with the fact that a uniformly $K$-stable fibration $f\!: Z \to \PP^1$ remains stable, even as we approach a large complex structure limit point, under sufficiently small deformations of complex structure, parametrised by $|\gamma| \mathcal{B}_{\epsilon}$ and so depending on the uniform stability constant $\lambda$.  

\begin{rmk} The above examples suggest that the range of the coupling constant $|\gamma|$ should be related to the width of an annulus parametrising mirror deformation of complex structures with respect to suitable normalised canonical coordinates on the complex Landau-Ginzburg moduli space (see \cite[Section 3.3]{KatzKontPant}).  
\end{rmk}
\end{exm}

\subsubsection{Solutions and $K$-stability}\label{ComplexKstab}

It is also natural to expect that a version of the Yau-Tian-Donaldson conjecture can be formulated for the cscK equation with $B$-field. In fact, we show in Section \ref{SurfacesSec} that, when $X$ is a complex surface, after a suitable change of variables, our equation \eqref{modelEqIntro}, at least for a special value of the coupling, is equivalent to the coupled K\"ahler-Einstein equations \cite{HultgrenWitt_coupled}, or the coupled cscK equations \cite{DatarPingali_coupled} for a pair of K\"ahler metrics, for which a Yau-Tian-Donaldson correspondence has already been proposed.
\begin{conj} Let $X$ be a K\"ahler surface, with discrete automorphisms. The cscK equation with $B$-field \eqref{modelEqIntro} is solvable in the complexified K\"ahler class $[\omega^{\C}] = [\ii \omega + B]$, with coupling constant $|\gamma| = \sin(\hat{\theta})$, iff the pair $([\omega], [\chi := \sin(\hat{\theta}) B + \cos(\hat{\theta}) \omega])$ is uniformly $K$-stable in the sense discussed in \cite[Section 1]{HultgrenWitt_coupled} and \cite[Section 3.1]{DatarPingali_coupled}. 
\end{conj}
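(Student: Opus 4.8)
The plan is to derive the conjecture from a Yau--Tian--Donaldson-type correspondence for the coupled cscK equations, via the translation of PDEs carried out in Section~\ref{SurfacesSec}. The first step is to make that translation explicit on a surface. Expanding $(\omega+\ii B)^{2}$ and putting $\chi = \sin(\hat{\theta})\,B + \cos(\hat{\theta})\,\omega$, one checks by a short computation that, when $n=2$, the dHYM equation in the system~\eqref{dKYM} is equivalent to the pointwise identity $\chi^{2}=\omega^{2}$ (consistently, $[\chi]^{2}=[\omega]^{2}$ is exactly the reality condition~\eqref{realityCond}), and that on this locus the second equation of~\eqref{dKYM}, for the special coupling $|\gamma|=\sin(\hat{\theta})$, becomes $s(\omega)-(\sin\hat{\theta})^{-1}\Lambda_{\omega}\chi=\mathrm{const}$. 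Together with the details in Section~\ref{SurfacesSec}, this identifies the two real parts of~\eqref{modelEqIntro} with the coupled cscK system of~\cite{DatarPingali_coupled} for the pair $([\omega],[\chi])$ (reducing to the coupled K\"ahler--Einstein system of~\cite{HultgrenWitt_coupled} in the appropriate sub-case), so the conjecture becomes: the coupled cscK system for $([\omega],[\chi])$ is solvable if and only if the pair is uniformly $K$-stable. Note this already forces $\chi$ to be a K\"ahler form, which is the implicit positivity hypothesis on $[B]$.

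For the ``only if'' direction I would run the standard moment-map pipeline in the coupled setting. Using Theorem~\ref{momentMapThm}, one first checks that for surfaces and $|\gamma|=\sin\hat{\theta}$ the (infinitesimal) Hamiltonian action descends to the coupled moment-map picture underlying~\cite{HultgrenWitt_coupled}, so that the associated Donaldson--Futaki invariants coincide with those of~\cite[Section~1]{HultgrenWitt_coupled} and~\cite[Section~3.1]{DatarPingali_coupled} --- this is what makes ``$K$-stability of the pair'' the correct obstruction. One then works with the coupled Mabuchi $K$-energy $\m{M}$ on the product of spaces of K\"ahler potentials for $[\omega]$ and $[\chi]$ (subject to $\chi^{2}=\omega^{2}$), shows it is convex along weak geodesics and that a solution of the coupled system is a minimiser, and deduces --- since $\operatorname{Aut}(X)$ is discrete --- that $\m{M}$ is coercive; finally, coercivity of $\m{M}$ yields uniform $K$-stability by taking non-Archimedean limits along test configurations. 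This is the comparatively soft half.

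The ``if'' direction is the real difficulty, and splits into two ingredients that are both currently open at this level of generality, just as the corresponding ``uniform $K$-stability $\Rightarrow$ existence'' implication is in the uncoupled cscK problem. First, one must upgrade uniform $K$-stability of the pair to coercivity of the coupled $K$-energy $\m{M}$: a coupled ``uniform YTD'' statement, requiring the construction of the coupled non-Archimedean functionals (coupled Mabuchi--Donaldson functional, $J^{\mathrm{NA}}$, and so on) and a valuative/filtration criterion in the spirit of the one available for cscK. Second, one must pass from coercivity of $\m{M}$ to a smooth solution via a priori estimates of Chen--Cheng type~\cite{ChenCheng_estimates} adapted to the constrained coupled system --- the $C^{0}$, gradient, Laplacian and higher-order estimates, treating the coupling term $\Lambda_{\omega}\chi$ and the Monge--Amp\`ere-type constraint $\chi^{2}=\omega^{2}$ as lower-order perturbations of the cscK estimates; this is precisely the analysis whose toric case underlies Theorem~\ref{MainThmIntro} (developed in Section~\ref{ToricEstimatesSec}), but the full Chen--Cheng machinery for the coupled and constrained system outside the toric setting has not yet been established. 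The main obstacle is therefore this pair of missing pieces --- a coupled version of the Chen--Cheng a priori estimates together with a coupled non-Archimedean pluripotential theory; granting them, what remains is bookkeeping, plus the routine verification that $|\gamma|=\sin\hat{\theta}$ and the positivity of $\chi$ are compatible with~\eqref{realityCond}.
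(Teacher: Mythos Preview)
The statement you are addressing is a \emph{conjecture} in the paper, not a theorem; the paper offers no proof, only the reduction in Section~\ref{SurfacesSec} showing that on a surface, with coupling $|\gamma|=\sin(\hat{\theta})$, the system~\eqref{dKYM} becomes the coupled cscK system~\eqref{coupledCscK} for the pair $(\omega,\chi)$. Your translation step reproduces exactly this reduction (with one slip: for $|\gamma|=\sin(\hat{\theta})$ the second equation in~\eqref{coupledCscK} reads $s(\omega)-\Lambda_{\omega}\chi=\mathrm{const}$, not $s(\omega)-(\sin\hat{\theta})^{-1}\Lambda_{\omega}\chi=\mathrm{const}$), so on the level of reformulation you match the paper precisely.

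Beyond the reformulation, what you have written is not a proof but a programme, and you say so yourself: the ``if'' direction hinges on a coupled Yau--Tian--Donaldson theorem (coupled non-Archimedean pluripotential theory plus coupled Chen--Cheng estimates) that is currently open, and even the ``only if'' direction, while more tractable, requires a coupled coercivity-implies-uniform-$K$-stability argument that is not in the literature in the stated generality. This is an honest and accurate assessment of the state of the art, and it is consistent with the paper's choice to label the statement a conjecture rather than a theorem. There is therefore nothing to compare your argument against; what you have produced is a correct identification of the conjecture with the coupled cscK YTD correspondence of \cite{HultgrenWitt_coupled, DatarPingali_coupled}, together with a plausible outline of the missing ingredients.
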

Here, $e^{\ii \hat{\theta}}$ is the topological angle, defined in \eqref{realityCond}, and we are assuming, without loss of generality, that we have $\sin(\hat{\theta}) > 0$, and that $[\chi]$ is a K\"ahler class (the condition that $\pm [\chi]$ is a K\"ahler class is necessary for \eqref{modelEqIntro} to be solvable, as it is equivalent to the solvability of the dHYM equation on a surface, see Section \ref{SurfacesSec} for more details).  

In this context, we can complement Question \ref{LGquestion} by asking, roughly speaking, what condition corresponds to $K$-stability under mirror symmetry. 
\begin{question} Is there an algebro-geometric characterisation for Landau-Ginzburg models $((Y, w), [\omega^{\C}_Y], \Omega_Y)$ which are mirror to a del Pezzo surface $(X, [\omega^{\C}_X], s_X)$ with a $K$-stable complexified polarisation $[\omega^{\C}_X]$ (i.e., such that the pair $([\omega_X], [\chi := \sin(\hat{\theta}) B_X + \cos(\hat{\theta}) \omega_X])$ is (uniformly, log) $K$-stable in the sense of \cite[Section 1]{HultgrenWitt_coupled} and \cite[Section 3.1]{DatarPingali_coupled})?  
\end{question}  
\subsubsection{Moduli spaces}
By analogy with the classical works \cite{Schumacher_CY, FujikiSchumacher_moduli} and the more recent results \cite{DervanNaumann_moduli}, we expect that our equation \eqref{modelEqIntro} can be used in order to construct moduli spaces of K\"ahler manifolds endowed with a complexified polarisation, i.e. pairs $(X, [\omega^{\C}])$.
\begin{conj} There exists a Hausdorff complex space $\mathfrak{M}_{\gamma}(X, [\omega^{\C}])$ which is a moduli space of K\"ahler manifolds with a complexified polarisation (in the same sense as \cite{FujikiSchumacher_moduli, DervanNaumann_moduli}), such that the cscK equation with $B$-field \eqref{modelEqIntro} is solvable in the class $[\omega^{\C}]$. Moreover, $\mathfrak{M}_{\gamma}(X, [\omega^{\C}])$ admits a natural Weil-Petersson type K\"ahler metric $\eta$.
\end{conj}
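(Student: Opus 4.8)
The plan is to adapt the construction of Fujiki-Schumacher \cite{FujikiSchumacher_moduli}, in the form developed more recently by Dervan-Naumann \cite{DervanNaumann_moduli}, replacing the cscK moment map by the moment map of Theorem~\ref{momentMapThm}. One first fixes the topological type of the complexified polarisation --- a locally constant assignment of a class in $H^{1,1}$ as the complex structure is deformed --- so that $\f{M}_{\gamma}(X,[\omega^{\C}])$ is to parametrise pairs $(X',[\omega^{\C}_{X'}])$ deformation equivalent to $(X,[\omega^{\C}])$ for which \eqref{modelEqIntro} is solvable; this solvability locus is open by the results of Section~\ref{opennessSec}, and nonempty in the toric situations of Theorem~\ref{MainThmIntro}. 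Working on the product space $\m{J}^{\mathrm{int}}\times\m{A}$ of integrable almost complex structures and connections, with the K\"ahler form determined by $|\gamma|$ and the extended gauge group $\widetilde{\m{G}}$ as in Lemma~\ref{momentMapLem} and Theorem~\ref{momentMapThm} (and its infinitesimal analogue in the non-Hodge case), the candidate moduli space near a given solution is the K\"ahler quotient $\mu^{-1}(0)/\widetilde{\m{G}}$, and globally one takes the union of such local models. Two structural inputs make this feasible: the moment map picture itself, and the integrability of the complexified infinitesimal action proved in Section~\ref{momentMapSec}, which supplies a well-defined notion of complexified orbit, hence of ``stable orbit'' in the GIT sense.

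The first step is the local Kuranishi model. The linearisation of \eqref{modelEqIntro} at a solution is a fourth-order elliptic operator of Lichnerowicz type --- the cscK operator perturbed by lower-order terms arising from the coupling $(\omega^{\C})^n/\omega^n$ --- hence Fredholm, with kernel the space of infinitesimal deformations of $(J_0,[\omega^{\C}_0])$ compatible with the equation and, by the moment map structure of Theorem~\ref{momentMapThm}, cokernel naturally identified with the Lie algebra $\f{h}_0$ of holomorphic vector fields. An implicit function theorem argument then produces a finite-dimensional analytic Kuranishi space $\m{K}$ carrying an action of $K_0=\operatorname{Aut}(X,J_0)\cap K$ and an equivariant holomorphic obstruction map, with the local moduli space equal to the quotient of the zero locus of the obstruction by the integrated complexified action, via the slice theorem. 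Gluing these charts is then routine once the slice is shown to be unique up to the automorphism action, exactly as in \cite{FujikiSchumacher_moduli}.

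For the Weil-Petersson metric, the K\"ahler form on $\m{J}^{\mathrm{int}}\times\m{A}$ restricts to $\mu^{-1}(0)$ and descends to the quotient; one checks it is independent of the chosen representatives and strictly positive, so it defines the metric $\eta$ on $\f{M}_{\gamma}(X,[\omega^{\C}])$, with local coordinate expression the $L^2$-pairing of harmonic representatives of the deformations. Equivalently one can exhibit $\eta$ as the curvature of a Deligne-pairing (CM-type) line bundle equipped with a metric built from the family of solutions, generalising the cscK construction; the moment-map computation shows the two constructions agree.

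The hard part will be Hausdorffness, which amounts to a uniqueness-up-to-automorphism statement for solutions of \eqref{modelEqIntro} within a fixed complexified polarisation: any two solutions representing nearby moduli points must differ by an element of $\operatorname{Aut}(X,[\omega^{\C}])$, i.e.\ the zero locus of $\mu$ must meet each complexified orbit in a single real-group orbit. In the cscK case this rests on convexity of the Mabuchi K-energy along geodesics in the space of K\"ahler potentials; here one needs both a workable notion of ``geodesic'' in the space of complexified K\"ahler forms respecting the dHYM constraint, and convexity of an energy functional whose Euler-Lagrange equation is \eqref{modelEqIntro} --- presumably a coupling of the Mabuchi functional with the Jacob-Yau complexified volume functional, in the spirit of the Calabi-Volume functional above. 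In the decoupled large-volume limit $\hat{\gamma}=0$ of Section~\ref{KYMSec} this reduces to known convexity results, but for finite coupling the interaction between $\omega$ and $B$ through $(\omega^{\C})^n/\omega^n$ is genuinely new, and establishing the required convexity --- together with the existence and regularity of the relevant weak geodesics --- is where the real difficulty lies. A secondary technical point, that the quotient inherits a genuine complex-analytic structure even though $\widetilde{\m{G}}$ acts only through an infinitesimal Hamiltonian action, should be handled by the integration result of Section~\ref{momentMapSec}.
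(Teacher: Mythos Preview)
The statement you are attempting to prove is a \emph{conjecture} in the paper, not a theorem: the paper offers no proof at all. The only remark the authors make is that ``it should be possible to prove this using the approach of \cite{DervanNaumann_moduli} at least in the special case when $\operatorname{Aut}(X,[\omega^{\C}])$ is discrete, while the general case seems much harder.'' So there is no proof to compare your proposal against; what you have written is a strategic outline, not a proof, and the paper does not claim more.

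That said, your outline is broadly aligned with what the authors suggest, and you correctly identify the main obstacle. A few concrete gaps are worth flagging. First, the openness result of Section~\ref{opennessSec} is proved only in the toric setting and only along a specific continuity path; it does not establish openness of the solvability locus in a general family of complexified polarised manifolds, which is what you would need for the local Kuranishi model. Second, your claim that the cokernel of the linearisation is ``naturally identified with $\f{h}_0$'' is the self-adjointness statement that underlies the cscK theory, but for the coupled system \eqref{dKYM} this has not been established in the paper and would require a separate argument (the moment map picture of Theorem~\ref{momentMapThm} is infinitesimal and does not by itself give the required Fredholm package). Third, and most seriously, the Hausdorffness step you isolate --- uniqueness of solutions up to $\operatorname{Aut}(X,[\omega^{\C}])$ via convexity of a suitable energy along geodesics --- is genuinely open: no such convexity is proved anywhere in the paper, and the authors themselves describe the non-discrete automorphism case as ``much harder.'' Your proposal is therefore a plausible roadmap, but it remains a conjecture.
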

It should be possible to prove this using the approach of \cite{DervanNaumann_moduli} at least in the special case when $\operatorname{Aut}(X, [\omega]^{\C})$ is discrete, while the general case seems much harder. Note that by Theorem \ref{UnstableThmIntro} the moduli space of solutions of \eqref{modelEqIntro} could be nonempty even when the cscK moduli space is empty. 
\begin{exm}
Following Section \ref{KYMSec}, we consider the moduli space $\mathfrak{M}_{\gamma}(X, [\ii \omega + k^{-1} B])$ for fixed $\gamma$ and $\ii \omega + B$, nearby the large volume limit $k \to \infty$. 

Suppose $X$ is Calabi-Yau. Then, for all sufficiently large $k$, we should have   
\begin{equation*}
\mathfrak{M}_{\gamma}(X, [\ii \omega + k^{-1} B]) \cong \mathfrak{M}_B(X, [\omega]) \subset \mathfrak{M}(X, [\omega]),
\end{equation*}
the fixed subspace of the moduli space of polarised Calabi-Yaus $\mathfrak{M}(X, [\omega])$ where $[B]$ is of type $(1, 1)$. This is because the equations \eqref{LargeVolumeEquations}, for fixed $\gamma_k = \gamma$, should be uniquely solvable for all large $k$, by analogy with the results of \cite[Section 4]{AlvarezGarciaGarcia_KYM}. Then, the K\"ahler metrics $\eta_k$ on this fixed subspace $\mathfrak{M}_B(X, [\omega])$, induced from $\mathfrak{M}_{\gamma}(X, [\ii \omega + k^{-1} B])$, provide natural deformations of the (restriction of) the classical Weil-Petersson metric on the Calabi-Yau moduli space, parametrised by the ``string length" parameter $\ell^2_s = k^{-1}$.  
\end{exm}
 
\subsection{Generalisations}
\subsubsection{Critical connections and critical metrics}
Dervan, McCarthy and Sektnan \cite{Dervan_Zconnections} studied generalisations of the dHYM equation, for arbitrary rank, associated with a polynomial central charge in the sense of Bayer \cite{BayerPolynomial}. 
Our equation \eqref{modelEqIntro} admits a straightforward generalisation to this context, namely
\begin{equation}\label{generalZintro}
s(\omega) + \gamma \frac{\tilde{\opZ}([B])}{\omega^n} = c \in \R,
\end{equation}
where 
\begin{equation*}
\tilde{\opZ}([B]) = \tilde{\opZ}([B], L) = \tilde{\opZ}_{\omega, k, B_{\omega}}(L)
\end{equation*}
denotes the $(n, n)$-form corresponding to a choice of polynomial central charge $\opZ$, evaluated on the Hermitian line bundle $(L, h_L)$, with respect to the representatives $\omega$, $B_{\omega}$, the latter denoting the unique solution to $\Lambda_{\omega} B_{\omega} = b \in \R$, i.e., the unique $\omega$-harmonic representative. Then, the special representative of $[\omega^{\C}]$, modulo $H^{1,1}(X, \mathbb{Z})$, corresponding to a solution of \eqref{generalZintro} is given by
\begin{equation*}
\omega^{\C} = \ii \omega + B_{\omega} + F_{L}, 
\end{equation*}
where $F_L = F_{h_L} = \ii F(A_{h_L})$ is the curvature of the metric $h_L$ on the fibres of $L$. Thus, our model equation \eqref{modelEqIntro} corresponds to 
\begin{equation*}
s(\omega) + \gamma \frac{\tilde{\opZ}_{\textrm{dHYM}}(\mathcal{O}_X)}{\omega^n} = c,
\end{equation*}
for the central charge
\begin{equation*}
\opZ_{\textrm{dHYM}}(E) = - \int_X e^{-\ii \omega - B} \ch(E).
\end{equation*}
Another relevant choice is the PDE 
\begin{equation*}
s(\omega) + \gamma \frac{\tilde{\opZ}_{\textrm{Todd}}(\mathcal{O}_X)}{\omega^n} = c,
\end{equation*}
corresponding to the central charge
\begin{equation*}
\opZ_{\textrm{Todd}}(E) = - \int_X e^{-\ii \omega - B} \ch(E) \sqrt{\td(X)}.
\end{equation*}
In this case, when taking the representative $(n,n)$-form $\tilde{\opZ}$, one uses a \emph{fixed} representative of $\sqrt{\td(X)}$.

One can show that the moment map interpretation, Lemma \ref{momentMapLem}, extends to \eqref{generalZintro}, at least when the $B$-field class is Hodge.

Dervan \cite{Dervan_crit_metrics} also studies a notion of critical K\"ahler metrics $\omega$ which deforms the scalar curvature $s(\omega)$ (nearby the large volume limit) with a suitable notion of central charge $\operatorname{\tilde{Z}}(\omega)$. Accordingly, one may consider extending this to complexified K\"ahler classes by the equation
\begin{equation*}
\operatorname{\tilde{Z}}(\omega^{\C}) := \operatorname{\tilde{Z}}(\omega) + \gamma \operatorname{\tilde{Z}}([B]) = \zeta \in \C^*.    
\end{equation*}  
\subsubsection{Anticanonical divisors}
As discussed above, according to \cite{KatzKontPant}, to describe Fano mirror pairs we should consider triples $(X, [\omega^{\C}_X = \ii \omega + B], s_X)$, where $s_X \in H^0(K^{-1}_X)$ is an anticanonical section (so $\Omega_X = s^{-1}_X$ is a meromorphic volume form, with simple poles along $D_X = \dv(s_X)$). 

Under some assumptions, it is possible to write an analogue of \eqref{generalLIntro} for the full data $(X, [\omega^{\C}_X], s_X)$. As in the proof of Lemma \ref{momentMapLem}, fix line bundles $N, L$ on $X$ such that $m [B] = c_1(N)$, for some $m \in \mathbb{Z}$, and moreover $N \otimes L^{m} \cong K^{-1}_X$. Then, we consider the equation  
\begin{equation}\label{FanoEqu}
s(\omega) + \gamma \frac{(\ii\omega + B_{\omega} + F_L)^n}{\omega^n} + \eta\big(\Delta_{\omega} -\frac{\ii}{2}\big)  | s_X |^{2}_h = c + \ii \tau,
\end{equation}
to be solved for a K\"ahler form $\omega$, a metric $h$ on the fibres of $K^{-1}_X$ given by $h_{N} \otimes h^{\otimes m}_{L}$, where $F_{h_{N}} = m B_{\omega}$, and real constants $\eta, c, \tau$. Extending the argument of Lemma \ref{momentMapLem} by the moment map picture in \cite{AlvarezGarciaGarcia_KYMH} shows that \eqref{FanoEqu} is also a vanishing moment map condition for a Hamiltonian group action. The complexified K\"ahler class representative associated with a solution is $\omega^{\C} = \ii \omega + B_{\omega} + F_L$. The large volume limit is the given by the K\"ahler-Yang-Mills-Higgs system studied in \cite{AlvarezGarciaGarcia_KYMH}.\\
 
\noindent\textbf{Acknowledgements.} We are grateful to Rudha\'i Dervan, Mario Garc\'{\i}a-Fern\'{a}ndez, Julien Keller, Sohaib Khalid, John McCarthy, Annamaria Ortu, James Pascaleff, Enrico Schlitzer, Lars Sektnan, Nicol\`o Sibilla, Zak Sj\"ostr\"om Dyrefelt, Paolo Stellari, Richard Thomas for several helpful discussions related to the present paper. We also wish to thank the anonymous Referee for some corrections and useful suggestions.
 
\section{Surfaces}\label{SurfacesSec}
In this Section we discuss our equations on a K\"ahler surface. We explain a connection to coupled K\"ahler-Einstein and cscK metrics, introduce certain objects in the Fukaya category associated with solutions, and prove Theorem \ref{MainThmIntro} in the case of surfaces.
\subsection{The equations on a surface; coupled KE and cscK metrics}
Let us consider the complex equation \eqref{extrEqIntro} on a (compact, K\"ahler) surface $X$. This is equivalent to 
\begin{equation}\label{SurfaceEqu}
\begin{dcases}
  \Imm e^{-\ii \hat{\theta}} (\omega + \ii B)^2 = 0\\
 s(\omega) - |\gamma| \Rea e^{-\ii \hat{\theta}} \frac{(\omega + \ii B)^2}{\omega^2} = \xi_{\omega},
\end{dcases} 
\end{equation}
where $\xi_{\omega}$ satisfies
\begin{equation*}
\nabla^{1,0}_{\omega} \xi_{\omega} = V, 
\end{equation*} 
for a fixed extremal field $V$ (independent of $\omega$). By a simple computation, the topological angle $e^{\ii \hat{\theta}}$ is given by
\begin{align}\label{TopAngleSurf}
\nonumber\cos(\hat{\theta}) = \frac{\int \omega^2 - \int B^2}{\big(\big(\int \omega^2 - \int B^2 \big)^2 + 4\big(\int \omega \wedge B\big)^2\big)^{1/2}},\\
\sin(\hat{\theta}) = \frac{2\int \omega \wedge B}{\big(\big(\int \omega^2 - \int B^2 \big)^2 + 4\big(\int \omega \wedge B \big)^2\big)^{1/2}}.
\end{align}
Using the identity
\begin{equation}\label{TrigId}
\cos(\hat{\theta}) \Imm e^{-\ii \hat{\theta}} \frac{(\omega + \ii B)^2}{\omega^2} + \sin(\hat{\theta}) \Rea e^{-\ii \hat{\theta}} \frac{(\omega + \ii B)^2}{\omega^2} = \Lambda_{\omega} B, 
\end{equation}
together with the first equation in \eqref{SurfaceEqu}, we can reduce the scalar curvature equation to
\begin{equation}\label{BtwistedEqu}
s(\omega) - \frac{|\gamma|}{\sin(\hat{\theta})}\Lambda_{\omega} B= \frac{\xi_{\omega}}{\sin(\hat{\theta})}.
\end{equation} 
On the other hand, setting
\begin{equation}\label{AdjSympl}
\chi = \sin(\hat{\theta}) B + \cos(\hat{\theta}) \omega,
\end{equation}
we have an identity
\begin{equation*}
\Imm e^{-\ii \hat{\theta}} (\omega + \ii B)^2 = \frac{\chi^2 - \omega^2}{\sin(\hat{\theta})}.  
\end{equation*}
Thus, we can write \eqref{SurfaceEqu}, using the variables $\omega$, $\chi$, as
\begin{equation}\label{coupledCscK} 
\begin{dcases}
 \chi^2 = \omega^2 \\
 s(\omega) - \frac{|\gamma|}{\sin(\hat{\theta})}\Lambda_{\omega} \chi = \frac{\xi_{\omega}}{\sin(\hat{\theta})}.
 \end{dcases} 
\end{equation} 
Note that the reduction of the dHYM equation on a surface to a complex Monge-Amp\`ere was first noticed in \cite{JacobYau_special_Lag}.

Suppose that the Futaki invariant vanishes, so $\xi_{\omega}$ is a constant, and that we have $\chi > 0$. For the special choice of real coupling   
\begin{equation*}
|\gamma| = \sin(\hat{\theta}),
\end{equation*}
the equations \eqref{coupledCscK} are the \emph{coupled cscK equations} studied by Datar and Pingali \cite{DatarPingali_coupled}, in the special case of two coupled metrics $(\omega_0 = \omega, \omega_1 = \chi)$. In particular, if $X$ is del Pezzo and we have
\begin{equation*}
[\omega] + [\chi] = (1 + \cos(\hat{\theta}))[\omega] + \sin(\hat{\theta})[B] = c_1(X),
\end{equation*}
then by a standard argument the equations \eqref{coupledCscK} reduce to the \emph{coupled K\"ahler-Einstein (KE) equations} of Hultgren and Witt Nystr\"om \cite{HultgrenWitt_coupled}, namely
\begin{equation}\label{coupledKE}
\begin{dcases}
\Ric(\chi) = \Ric(\omega),\\
\Ric(\omega) = \chi + \omega.
\end{dcases}
\end{equation}
In this way the coupled KE equations for the pair $([\omega], [\chi])$ can be interpreted as fixing a canonical representative of the complexified K\"ahler class   
\begin{equation*}
\omega^{\C} = \ii [\omega] + (\sin(\hat{\theta}))^{-1}([\chi] - \cos(\hat{\theta})[\omega]),
\end{equation*}
up to the action of $\operatorname{Aut}(X, [\omega^{\C}])$.

As a different example, not related to coupled KE or cscK metrics, let us consider our general equations on a surface \eqref{SurfaceEqu} with the special ansatz
\begin{equation*}
B = \sigma \omega + \tau \Ric(\omega),\,\tau \neq 0.
\end{equation*}
Then, by \eqref{BtwistedEqu}, the scalar curvature equation is automatically satisfied, for any $\omega$, for 
\begin{equation*}
|\gamma| = \frac{\sin(\hat{\theta})}{\tau},
\end{equation*} 
so for these choices \eqref{SurfaceEqu} is equivalent to the single equation
\begin{equation}\label{DeformedCscK}
\Imm e^{-\ii \hat{\theta}} (\omega + \ii (\sigma \omega + \tau \Ric(\omega)))^2 = 0.
\end{equation}
By standard arguments, this can be solved for all sufficiently small $\tau$, provided $[\omega]$ admits a cscK representative $\omega_0$, and $(X, [\omega])$ has discrete automorphisms (this seems closely related to special cases of the deformed cscK equation studied in \cite{Dervan_crit_metrics}). Note that the corresponding large volume limit equations, in the sense of \eqref{DecoupledHYM}, are trivially solvable, by
\begin{equation*}
\begin{dcases}
\Lambda_{\omega_0} (\sigma \omega_0 + \tau\Ric(\omega_0)) = 2\sigma + \tau \hat{s}\\
s(\omega_0) = \hat{s}.
\end{dcases}
\end{equation*}

\subsection{Mean curvature and objects in $\operatorname{Fuk}(X, \omega^{\C})$}\label{MeanCurvSec}
Let $\Sigma \subset (X, \omega)$ be a Lagrangian surface. Following \cite[Section 1]{SchoenWolfson}, we define a $1$-form $\sigma_H$ on $\Sigma$ as
\begin{equation*}
\sigma_H = \iota_H \omega,
\end{equation*}
where $H$ denotes the mean curvature vector, a section of the normal bundle $N_{\Sigma|X}$. Then, by \cite[Appendix A]{SchoenWolfson} we have
\begin{equation}\label{MCform}
d\sigma_H = \Ric(\omega)|_{\Sigma}.
\end{equation}
\begin{rmk}This classical computation holds for general K\"ahler manifolds and shows that in the K\"ahler-Einstein case the mean curvature flow preserves the Lagrangian condition. 
\end{rmk}
Suppose that $(\omega, \chi)$ is a coupled KE pair on $X$, i.e., a solution of \eqref{coupledKE}, underlying a cscK metric with $B$-field $\omega^{\C} = \ii \omega + B$ (so $\chi$, given by \eqref{AdjSympl}, is a K\"ahler form by assumption). In particular, $X$ is a del Pezzo surface. Then, using \eqref{MCform}, we compute
\begin{equation*}
d\sigma_{H} = \Ric(\omega)|_{\Sigma} = (\omega + \chi)|_{\Sigma} = \sin(\hat{\theta}) B|_{\Sigma}. 
\end{equation*}
This shows that the $B$-field is exact along $\omega$-Lagrangians, and that we have a distinguished object of the Fukaya category 
\begin{equation*}
\left(\Sigma, d + \frac{\sigma_{H}}{\sin(\hat{\theta})}\right) \in \operatorname{Fuk}(X, \ii \omega + B). 
\end{equation*}
associated with the mean curvature vector of a Lagrangian. 

Similarly, if $\tilde{\Sigma} \subset (X, \chi)$ is a Lagrangian surface, with mean curvature $\tilde{H}$, we compute
\begin{equation*}
d\sigma_{\tilde{H}} = \Ric(\chi)|_{\tilde{\Sigma}} = (\omega + \chi)|_{\tilde{\Sigma}} = \left(\frac{\chi - \sin(\hat{\theta}) B}{\cos(\hat{\theta})} + \chi\right)|_{\tilde{\Sigma}} = -\tan(\hat{\theta}) B|_{\Sigma},
\end{equation*}
so again $B$ is exact along $\chi$-Lagrangians, and we have a natural object
\begin{equation*}
(\tilde{\Sigma}, d -\cot(\hat{\theta}) \sigma_{\tilde{H}}) \in \operatorname{Fuk}(X, \ii \chi + B). 
\end{equation*}

Finally, suppose that $\omega^{\C} = \ii \omega + B$, $B = \sigma \omega + \tau\Ric(\omega)$ is a cscK metric with $B$-field corresponding to a solution of \eqref{DeformedCscK}. Let $\Sigma \subset (X, \omega)$ be a Lagrangian surface. Then, we have
\begin{equation*}
d\sigma_{H} = \Ric(\omega)|_{\Sigma} = \tau^{-1} B|_{\Sigma},  
\end{equation*} 
so we find an object
\begin{equation*}
(\Sigma, d + \tau \sigma_{H}) \in \operatorname{Fuk}(X, \ii\omega + B).
\end{equation*}
  
\subsection{Continuity method}\label{C0method2d} 
Expanding the dHYM equation as
\begin{equation*}
\sin(\hat{\theta}) B^2 + 2\cos(\hat{\theta}) B\wedge \omega  = \sin(\hat{\theta})\omega^2
\end{equation*}
we note (as in \cite[Lemma 18]{SchlitzerStoppa}) that under the (semi)positivity assumptions
\begin{equation*}
\sin(\hat{\theta}), \cos(\hat{\theta}) > 0;\, B\geq 0
\end{equation*}
we obtain the elementary a priori bound
\begin{equation}\label{traceBound2d}
0 \leq \Lambda_{\omega} B < \tan(\hat{\theta}).
\end{equation}

Suppose that the classical Futaki character $\m{F}_{[\omega]} = \m{F}_{[\omega^{\C}],0}$ vanishes. With these assumptions, let us consider the continuity path
\begin{equation}\label{C0path2d}
\begin{dcases}
(\sin(\hat{\theta}) B_t + \cos(\hat{\theta}) \omega_t)^2 = \omega^2_t \\
s(\omega_t) - \frac{t|\gamma|}{\sin(\hat{\theta})}\Lambda_{\omega_t} B_t  = \hat{s} - \frac{t |\gamma|}{\sin(\hat{\theta})}\xi_{t},\,t\in [0,1],
\end{dcases}
\end{equation}
for $\omega_t \in [\omega]$, $B_t \in [B]$. The real holomorphy potential $\xi_t$ satisfies 
\begin{equation*}
\nabla^{1,0}_{\omega_t} \xi_t = V,
\end{equation*}
where the extremal field $V$ is identified with the Futaki invariant
\begin{equation*}
\m{F}_{[\omega^{\C}], 1}(W) = \int_X \varphi(W,\omega)\left(\frac{\Rea\left(\mrm{e}^{-\I\vartheta}\left(\omega+\ii B(\omega)\right)^n\right)}{\omega^n} - \hat{r}\right)\omega^n, 
\end{equation*}
with respect to the Futaki-Mabuchi inner product, which only depends on $[\omega^{\C}]$, where the constant $\hat{r}$ is such that $\m{F}$ vanishes on constant functions.

Then, for $t = 0$, the equations decouple to 
\begin{equation*}
\begin{dcases}
(\sin(\hat{\theta}) B_0 + \cos(\hat{\theta}) \omega_0)^2 = \omega^2_0 \\
s(\omega_0) = \hat{s}, 
\end{dcases}
\end{equation*}
that is, they are equivalent to the cscK equation for $\omega_0$, while for $t = 1$ they are equivalent to \eqref{SurfaceEqu}. 
 
Suppose now $(X, \omega_0)$ is a \emph{toric} cscK pair. In Section \ref{ToricEstimatesSec} (see in particular Lemma \ref{lemma:unifKstab_alpha}) we will see that, in this case, for fixed topological angle $e^{\ii \hat{\theta}}$, one can obtain a uniform a priori estimate on $\sup|\xi_t|$, for $t \in [0,1]$, depending only on the angle, of then uniform estimates of all orders on $\omega_t$, $B_t$, $\xi_t$, for $t \in [0,1]$, as long as $B_{t} \geq 0$ (so that the elementary bound \eqref{traceBound2d} holds), provided the real coupling constant satisfies
\begin{equation}\label{gammaBound}
|\gamma| < \frac{\hat{s}\,\lambda}{(1-\lambda)((\cos{\hat{\theta}})^{-1} - (\sin(\hat{\theta}))^{-1} \inf \xi_t )},
\end{equation}
where $\lambda > 0$ denotes Donaldson's uniform $K$-stability constant for the pair $(X, \omega_0)$. Therefore, the set of solution times is closed in $[0, 1]$ for $|\gamma|$ satisfying \eqref{gammaBound}. 

On the other hand, in order to obtain openness, as well as an effective estimate for the coupling constant, we will make the ansatz on the $B$-field
\begin{equation}\label{Bansatz}
B_t = \varepsilon (\omega_t + \delta \eta_t)
\end{equation}
for fixed $\varepsilon, \delta > 0$, to be determined, and suitable closed $\eta_t \in \mathcal{A}^{1,1}(X, \R)$ in the fixed cohomology class $[\eta_0]$. By \eqref{TopAngleSurf}, this is compatible with our assumptions on the topological angle $\sin(\hat{\theta}), \cos(\hat{\theta}) > 0$, provided $\varepsilon, \delta > 0$ are sufficiently small. With this ansatz, for fixed $\delta$ and to leading order in $\varepsilon$, our equations \eqref{C0path2d} reduce to
\begin{equation*}
\begin{dcases}
\Delta_{\omega_t} \eta_t = 0\\
s(\omega_t) = c,
\end{dcases}
\end{equation*}
which are solved by choosing $\omega_t = \omega_0$, $\eta_t = \eta_0$, where $\eta_0$ denotes the $\omega_0$-harmonic representative of $[\eta_0]$. Note that we have the uniform estimate for the trace term appearing in \eqref{C0path2d},
\begin{equation*}
0 \leq t\frac{\Lambda_{\omega_t} B_t}{\sin(\hat{\theta})} < \frac{1}{\cos(\hat{\theta})} < 2
\end{equation*}
for all sufficiently small $\varepsilon$, $\delta$. 

Now fix $\epsilon > 0$. Then, the general results of Section \ref{opennessSec}, combined with our uniform estimates along the path, show that the set of solution times is also open in $[0, 1]$, for all sufficiently small $\varepsilon$, $\delta$, provided the real coupling constant satisfies 
\begin{equation*} 
|\gamma| < \frac{\hat{s}\,\lambda}{2(1-\lambda)} \epsilon^{-1} < \frac{\hat{s}\,\lambda}{(1-\lambda)((\cos{\hat{\theta}})^{-1} -  (\sin(\hat{\theta}))^{-1}\inf \xi_t)},
\end{equation*}
and the semipositivity condition $B_t \geq 0$ is open. Note that, by construction, we have 
\begin{equation*}
\xi_t = \varepsilon (z + O(\delta)), 
\end{equation*}
uniformly in $t$ (recall the slope $z$ is given by \eqref{zSlope}), and so by \eqref{TopAngleSurf}, for any fixed $\epsilon > 0$ we have
\begin{equation*}
(\cos{\hat{\theta}})^{-1} -  (\sin(\hat{\theta}))^{-1}\inf \xi_t = (\cos{\hat{\theta}})^{-1} - \varepsilon (z + O(\delta)) (\sin(\hat{\theta}))^{-1} < \epsilon
\end{equation*}
for all sufficiently small $\varepsilon, \delta$. 

As for the condition $B_t \geq 0$, we note that the equation $\Delta_{\omega_t} \eta_t = 0$, together with our uniform control on $\omega_t$ (\emph{which only requires semipositivity of $B_t$}), yields uniform estimates of all orders on $\eta_t$, which are independent of sufficiently small $\varepsilon$, $\delta$, so choosing $\delta$ sufficiently small the form $B_t = \varepsilon (\omega_t + \delta \eta_t)$ \emph{actually remains strictly positive} for all $t \in [0, 1]$.    

This completes the proof of Theorem \ref{MainThmIntro} in the case of surfaces. Corollary \ref{MainCorIntro} follows from the discussion in Section \ref{KYMSec} and the fact that in the argument above we may choose $\delta$ independently of (sufficiently smal) $\varepsilon$.  

\section{Threefolds}\label{ThreefoldsSec}
In this Section we complete the proofs of Theorem \ref{MainThmIntro} and Corollary \ref{MainCorIntro} for threefolds, and we prove Theorem \ref{UnstableThmIntro}.
\subsection{Continuity method}
Suppose $(X, [\omega])$ is a compact, polarised K\"ahler threefold, which is uniformly $K$-stable with constant $\lambda$. Similarly to \eqref{TopAngleSurf}, the topological angle is determined by
\begin{equation*}
(\cos(\hat{\theta}), \sin(\hat{\theta})) = \frac{v}{||v||},
\end{equation*}
with
\begin{equation}\label{TopAngle3d}
\begin{dcases}
v_1 = \int_X  \omega^3 - 3 B^2\wedge \omega,\\
v_2 = \int_X 3 B\wedge \omega^2 - B^3.
\end{dcases}
\end{equation}
The analogue of \eqref{TrigId} is the identity
\begin{equation*} 
\cos(\hat{\theta}) \Imm e^{-\ii \hat{\theta}} \frac{(\omega + \ii B)^3}{\omega^3} + \sin(\hat{\theta}) \Rea e^{-\ii \hat{\theta}} \frac{(\omega + \ii B)^3}{\omega^3} = \Lambda_{\omega} B - \frac{B^3}{\omega^3}.  
\end{equation*}
Therefore, we can write our complex equation \eqref{extrEqIntro} on a threefold in the form
\begin{equation*}   
\begin{dcases}
  \Imm e^{-\ii \hat{\theta}} (\omega + \ii B)^3 = 0\\
 s(\omega) - \frac{|\gamma|}{\sin(\hat{\theta})} \left( \Lambda_{\omega} B - \frac{B^3}{\omega^3} \right) = \frac{\xi}{\sin(\hat{\theta})}.
\end{dcases} 
\end{equation*}
Similarly to \eqref{C0path2d}, we consider the continuity path
\begin{equation}\label{C0path3d} 
\begin{dcases}
  \Imm e^{-\ii \hat{\theta}} (\omega_t + \ii B_t)^3 = 0\\
 s(\omega_t) - \frac{t|\gamma|}{\sin(\hat{\theta})} \left( \Lambda_{\omega_t} B_t - \frac{B^3_t}{\omega^3_t} \right) = \hat{s} -\frac{t|\gamma|}{\sin(\hat{\theta})} \xi_t,\,t\in[0,1],
\end{dcases} 
\end{equation}
for $\omega_t \in [\omega]$, $B_t \in [B]$. For $t = 0$, the equations decouple to
\begin{equation*} 
\begin{dcases}
  \Imm e^{-\ii \hat{\theta}} (\omega_0 + \ii B_0)^3 = 0\\
 s(\omega_0) = \hat{s}.
\end{dcases} 
\end{equation*}
We need an analogue of the elementary a priori bound \eqref{traceBound2d}. By direct computation, we may write the dHYM equation appearing in \eqref{C0path3d} explicitly as
\begin{equation*}
-\frac{B^3}{\omega^3} \cos(\hat{\theta}) + \Lambda_{\omega}B \cos(\hat{\theta}) + 3 \frac{B^2\wedge \omega}{\omega^3}\sin(\hat{\theta}) = \sin(\hat{\theta}), 
\end{equation*}
that is, 
\begin{equation}\label{dHYM3d}
 \Lambda_{\omega}B - \frac{B^3}{\omega^3} = \left(1 - 3 \frac{B^2\wedge \omega}{\omega^3}\right)\tan(\hat{\theta}), 
\end{equation}
or, in terms of the eigenvalues $\lambda_i$ of the endomorphism $\omega^{-1} B$, as
\begin{equation*}
\sum_i \lambda_i - \prod_i \lambda_i  = \left(1 - \sum_{i < j} \lambda_i\lambda_j\right)\tan(\hat{\theta}). 
\end{equation*}
Suppose now, as in the case of surfaces, that we have
\begin{equation*}
\tan(\hat{\theta}) > 0;\,B_t\geq 0  \iff \lambda_i(t) \geq 0,\,t\in [0,1],
\end{equation*}
and moreover that we have 
\begin{equation}\label{InitialBound3d}
\sum_{i < j} \lambda_i(0)\lambda_j(0) < 1
\end{equation} 
\emph{at the initial point} $t = 0$ of the continuity path \eqref{C0path3d}. Then, we claim the uniform bound
\begin{equation}\label{BasicBound3d}
0 \leq \Lambda_{\omega_t} B_t - \frac{B^3_t}{\omega^3_t} <  \tan(\hat{\theta}),\,t\in[0,1]
\end{equation}
along the continuity path. For otherwise, by contradiction, using the dHYM equation \eqref{dHYM3d} and our assumptions above, we have
\begin{equation*}
\sum_{i < j} \lambda_i(\bar{t})\lambda_j(\bar{t}) = 1  
\end{equation*} 
and
\begin{equation*}
\sum_i \lambda_i(\bar{t}) = \prod_i \lambda_i(\bar{t})  
\end{equation*} 
for some $\bar{t} \in (0, 1]$. So necessarily $\lambda_i(\bar{t}) > 0$ for all $i$, from which
\begin{equation*}
\frac{\sum_{i < j} \lambda_i(\bar{t})\lambda_j(\bar{t})}{\prod_i \lambda_i(\bar{t})} = \frac{1}{\prod_i \lambda_i(\bar{t})} = \frac{1}{\sum_i \lambda_i(\bar{t})},  
\end{equation*}
that is,
\begin{equation*}
\left(\sum_i \lambda^{-1}_i(\bar{t})\right)^{-1} = 3\frac{\sum_i \lambda_i(\bar{t})}{3} \geq 3 \left(\frac{\sum_i \lambda^{-1}_i(\bar{t})}{3}\right)^{-1},
\end{equation*}
by the inequality between the arithmetic and harmonic means, a contradiction.

Arguing as in Section \ref{C0method2d}, using the results of Section \ref{ToricEstimatesSec}, we see that, with our current assumptions, the set of solution times for \eqref{C0path2d} is closed in $[0,1 ]$ for the range of the coupling constant 
\begin{equation*} 
|\gamma| < \frac{\hat{s}\,\lambda}{(1-\lambda)((\cos{\hat{\theta}})^{-1} -  (\sin(\hat{\theta}))^{-1}\inf \xi_t)}.
\end{equation*}
To obtain openness and effective estimates on $|\gamma|$, we consider again the ansatz \eqref{Bansatz}, namely
\begin{equation*} 
B_t = \varepsilon (\omega_t + \delta \eta_t)
\end{equation*}
for fixed $\varepsilon, \delta > 0$, to be determined, and $[\eta_t] = [\eta_0] \in H^{1,1}(X, \R)$. In this case, by construction, we have
\begin{equation*}
 \xi_t = \varepsilon(\hat{r} + O(\delta)),
\end{equation*}
uniformly in $t$, where
\begin{equation*}
\hat{r} = \frac{\int_X 3 B\wedge \omega^2 - B^3}{\int_X \omega^3},
\end{equation*}
and by \eqref{TopAngle3d}, for any fixed $\epsilon > 0$ we have
\begin{equation*}
(\cos{\hat{\theta}})^{-1} -  (\sin(\hat{\theta}))^{-1} \inf \xi_t = (\cos{\hat{\theta}})^{-1} - \varepsilon(\hat{r} + O(\delta))(\sin(\hat{\theta}))^{-1} < \epsilon
\end{equation*}
for all sufficiently small $\varepsilon, \delta$.

Note that the limit $\delta = 0$ corresponds to the trivial dHYM solution  
\begin{equation*}  
  \Imm e^{-\ii \hat{\theta}_{\epsilon}} (\omega_t + \ii \varepsilon \omega_t)^3 = 0,
\end{equation*}
valid for all $\varepsilon > 0$. The leading term in the expansion around $\varepsilon = 0$, corresponding to the large volume limit, is the Laplace equation $\Delta_{\omega_t} \eta_t = 0$.

The perturbation theory for dHYM solutions around the large volume limit has been studied in much greater generality (for all ranks) in \cite{Dervan_Zconnections}. In our particular case, the results of \cite[Section 4.1]{Dervan_Zconnections} show that, for fixed background K\"ahler form $\omega_t$ and $\delta$, the equation
\begin{equation*}
\Imm e^{-\ii \hat{\theta}_{\epsilon}} (\omega_t + \ii \varepsilon (\omega_t + \delta \eta_t))^3 = 0
\end{equation*}
is solvable with respect to $\eta_t$, with uniform estimates \emph{with respect to $\omega_t$}, for all sufficiently small $\varepsilon$. In particular, the initial bound \eqref{InitialBound3d} can be achieved. Then, the estimates on $\omega_t$ obtained in Section \ref{ToricEstimatesSec}, using the scalar curvature equation 
\begin{equation*}
s(\omega_t) - \frac{t|\gamma|}{\sin(\hat{\theta})} \left( \Lambda_{\omega_t} B_t - \frac{B^3_t}{\omega^3_t} \right) = \xi_t
\end{equation*}
and the uniform bound 
\begin{equation*}
0 \leq \frac{1}{\sin(\hat{\theta})}\left(\Lambda_{\omega_t} B_t - \frac{B^3_t}{\omega^3_t} \right) <  \frac{1}{\cos(\hat{\theta})} < 2,\,t\in[0,1],
\end{equation*}
for all sufficiently small $\varepsilon, \delta$ (which follows at once from \eqref{TopAngle3d} and \eqref{BasicBound3d}), show that the estimates on $\eta_t$ are in fact uniform with respect to $t$. These are valid for the range of the coupling constant 
\begin{equation*} 
|\gamma| < \frac{\hat{s}\,\lambda}{(1-\lambda)}\epsilon^{-1} < \frac{\hat{s}\,\lambda}{(1-\lambda)((\cos{\hat{\theta}})^{-1} - (\sin(\hat{\theta}))^{-1}\inf \xi_t)} 
\end{equation*}
for fixed $\epsilon > 0$ and all sufficiently small $\varepsilon, \delta$. The argument in Section \ref{C0method2d} then shows that the set of solution times is open and closed in $[0, 1]$.

\subsection{An unstable polarised threefold}\label{KUnstSec}
For Theorem \ref{UnstableThmIntro}, we consider the toric Fano K\"ahler-Einstein threefold
\begin{equation*}
X = \PP(\mathcal{O}\oplus\mathcal{O}(1,-1)) \to \PP^1 \times \PP^1.
\end{equation*}
The large volume limit of our equations \eqref{modelEqIntro}, i.e., the K\"ahler-Yang-Mills system \eqref{KYMIntro} on $X$, was studied in detail by Keller and T{\o}nnesen-Friedman in \cite[Section 3]{Keller_KYM}. 

They consider Calabi ansatz metrics on $X$ with respect to the product of Fubini-Study metrics on the base $\PP^1 \times \PP^1$,
\begin{equation}\label{CalabiMetric}
\omega = \frac{1}{x_1} \omega_1 + \frac{1}{x_2} \omega_2 + \ii \del\delbar f(s),\,-1 < x_i < 1.  
\end{equation}
The boundary conditions on $f(s)$ are chosen so that the cohomology class of $\omega$ is given by
\begin{equation*}
[\omega] = \frac{1}{x_1} [\omega_1] + \frac{1}{x_2} [\omega_2] + [E_0 + E_{\infty}], 
\end{equation*}
where $E_0$, $E_{\infty}$ denote the zero and infinity sections of the ruling. The Calabi ansatz provides an explicit traceless form $\alpha$, with respect to $\omega$, and one looks for K\"ahler-Yang-Mills pairs of the form $(\omega, \gamma_{a,b})$, where  \begin{equation*}
\gamma_{a, b} = a \omega + b \alpha,
\end{equation*}   
i.e. solutions of 
\begin{equation}\label{KYM_example}
s(\omega) + \hat{\gamma} \Lambda^2_{\omega}(\gamma_{a, b}\wedge \gamma_{a, b}) = \tilde{c}.
\end{equation}
Note that the cohomology class of $\gamma_{a, b}$ can be computed explicitly as 
\begin{align*}
[\gamma_{a, b}] &= \left(\frac{a}{x_1} + \frac{b(1+x_1x_2)}{(1-x^2_1)(1-x^2_2)}\right)\omega_1
 + \left(\frac{a}{x_2}+ \frac{b(1+x_1x_2)}{(1-x^2_1)(1-x^2_2)}\right) \omega_2\\
& + \left(a - \frac{b(1+x_1x_2)}{(1-x^2_1)(1-x^2_2)}\right)[E_0 + E_{\infty}].
\end{align*} 
Here the unknown is of course the metric $\omega$ and so the convex function $f(s)$, satisfying the Calabi boundary conditions. By standard theory, the scalar curvature equation for $f(s)$ can be transformed into a second order, linear ODE for the momentum profile $\phi(\tau)$, as a positive real function defined on $(-1, 1)$, with \emph{overdetermined} boundary conditions. 

Keller and T{\o}nnesen-Friedman prove, in a more general context, that the the boundary conditions can be expressed as a (nonhomogeneous) linear system in the variables
\begin{equation*}
\kappa_1 = 12 a^2 \hat{\gamma} - \tilde{c},\, \kappa_2 = 4 b^2 \hat{\gamma}, 
\end{equation*}
with coefficients and datum given by explicit rational functions of $x_1, x_2$. They observe that, for $x_1 \neq - x_2$, this linear system admits a unique solution $(\kappa_1, \kappa_2)$, given by rational functions of $x_1, x_2$, and prove that for the specific choice 
\begin{equation*} 
x_1 = \frac{1}{2}, \, x_2 = -\frac{3}{4},
\end{equation*}   
we have
\begin{equation*}
\kappa_2 > 0 \Rightarrow \hat{\gamma} > 0,\,\phi(\tau) > 0 \textrm{ on } (-1,1).
\end{equation*}
Moreover, all K\"ahler class with $x_1 = \frac{1}{2}$, $x_2 \in (-1, 1) \setminus \{-\frac{1}{2}\}$ do not admit cscK metrics. 

Following \cite[Sections 3, 4]{SchlitzerStoppa_examples}, this analysis can be generalised from \eqref{KYM_example} to the equation \eqref{modelEqIntro}. However, we will only do this here nearby the large volume limit (as in \cite[Section 7]{SchlitzerStoppa_examples}). Namely, we consider the Calabi ansatz 
\begin{equation*}
B_{\epsilon} = \varepsilon\left(\gamma_{a, b} + \ii \del\delbar g(s)\right).
\end{equation*}
The dHYM equation for $B$ becomes a second order linear ODE for the function $g(s)$, vanishing at $s = 0$, $s = \infty$. This is uniquely solvable for all sufficiently small $\varepsilon$, and the scalar curvature equation for the Calabi ansatz K\"ahler metric \eqref{CalabiMetric}, 
\begin{equation*}
s(\omega) - |\gamma| \Rea e^{-\ii \hat{\theta}} \frac{(\omega + \ii B_{\varepsilon})^n}{\omega^n} = c 
\end{equation*} 
becomes a second order linear ODE for the momentum profile $\phi_{\varepsilon}(\tau)$ as a positive real function on $(-1, 1)$, such that
\begin{equation*}
\phi_{\varepsilon}(\tau) = \phi(\tau) + \varepsilon \psi_{\epsilon}(\tau)
\end{equation*}
satisfying overdetermined boundary conditions, where $\psi_{\epsilon}(\tau)$ is continuous on $[-1, 1]$. Thus, for all sufficiently small $\varepsilon$, we find a positive solution $\psi_{\epsilon}(\tau)$, with K\"ahler parameters 
\begin{equation*}
x_1 = \frac{1}{2},\,x_2 = -\frac{3}{4} + O(\epsilon).
\end{equation*}
In particular, the corresponding K\"ahler class does not admit a cscK metric.

\section{Toric estimates}\label{ToricEstimatesSec}

In this Section, we focus on obtaining a priori estimates for solutions $\omega^{\C} = \ii \omega + B$ of \eqref{extrEqIntro}, under the assumption that $\omega$ and $B$ are invariant under the action of an~$n$-dimensional torus $\left(\bb{C}^*\right)^n\subset\mrm{Aut}(X)$. These estimates were a crucial ingredient in our proofs of Theorem \ref{MainThmIntro} and Corollary \ref{MainCorIntro}, but may be useful in a more general context.

We fix a background symplectic form $\omega_0\in [\omega]$ such that the action of the compact torus $\bb{T}^n\curvearrowright(X,\omega_0)$ is Hamiltonian. The image of the moment map is a convex polytope~$P\subset\bb{R}^n$, and any torus-invariant tensor on $X$ can be described in terms of functions on $P$. For example, the metric tensor defined by any torus-invariant $\omega\in\alpha$ corresponds to the Hessian of a convex function $u\in\m{C}^\infty(P^\circ)\cap\m{C}(\bar{P})$ with prescribed boundary behaviour. This convex function is the \emph{symplectic potential} of $\omega$: it is determined via Legendre duality from a (normalized) local K\"ahler potential for $\omega$ on the open set $U\subset X$ where the $\bb{T}^n$-action is free. We denote by $\m{S}(P)$ the set of all symplectic potentials on $P$, each of which corresponds to a torus-invariant K\"ahler metric on $X$.
 
Denoting by~$\bm{y}=(y^1,\dots,y^n)$ the usual real coordinates on~$P$, we can rewrite the real scalar curvature equation appearing in \eqref{extrEqIntro} on the momentum polytope $P$ as
\begin{equation*}
-u^{ij}_{,ij}(\bm{y})=A_0+\alpha \left(r_\omega(B)-\tilde{A}\right)(\bm{y})
\end{equation*}
where we set 
\begin{equation*}
A_0=4\hat{s},\,\alpha =4|\gamma|, 
\end{equation*}
$u\in\m{C}(P)$ is the symplectic potential of~$\omega$, the \emph{radius function} is given by
\begin{equation*}
r_\omega(B) = \left|\frac{(\omega^{\C})^n}{\omega^n}\right|,
\end{equation*} 
with (topological) average $\hat{r}$, and $\tilde{A}$ denotes a \emph{fixed affine linear function} on $P$, depending only the cohomology classes $[\omega]$, $[B]$, given by the dual of the linear function $\m{F}_{[\omega^{\C}], |\gamma|}$.

As we are assuming that the classical Futaki character of $(X, [\omega])$ vanishes, we have 
\begin{equation*}
A_0=\mrm{vol}(P,\,\mrm{d}\mu)/\mrm{vol}(\diff P,\,\mrm{d}\sigma), 
\end{equation*}
see Definition~\ref{def:Kstab} and Remark~\ref{rmk:Futaki}. 
In what follows, it will be convenient to set
\begin{equation*}
A(\omega)(\bm{y}):=A_0+\alpha \left(r_\omega(B(\omega))-\tilde{A}\right)(\bm{y})
\end{equation*}
and to write the scalar curvature equation in the form
\begin{equation}\label{eq:scalar_implicit_polytope}
-u^{ij}_{,ij}=A(\omega).
\end{equation}
Notice that~$A(\omega)\in L^\infty(P)$, as~$r_\omega(B(\omega))$ is a continuous function on~$X$.

The crucial tool for the study of prescribed curvature equations on~$P$ is uniform (toric)~$K$-stability. Its definition uses a measure~$\mrm{d}\sigma$ on~$\diff P$, given explicitly in \cite{Donaldson_stability_toric}, which is a constant multiple of the Lebesgue measure~$\mrm{d}\mu$ on each facet of~$P$.
\begin{definition}\label{def:Kstab}
For a chosen point~$p_0\in P^\circ$, we let $\m{C}_\infty(P)$ be the set of \emph{normalized} convex functions, that is convex functions~$f\in\m{C}(P)\cap\m{C}^\infty(P^\circ)$ such that $f(p_0)=\mrm{d}f(p_0)=0$. For a function~$A\in L^\infty(P)$, consider the functional
\begin{equation*}
\m{L}_A(f):=\int_{\diff P}f\,\mrm{d}\sigma-\int_P A\,f\,\mrm{d}\mu.
\end{equation*}
 The pair~$(P,A)$ is \emph{uniformly~$K$-stable} if there exists a positive constant~$\lambda$ such that
\begin{equation}\label{eq:K_uniform}
\m{L}_A(f)>\lambda\int_{\diff P}f\mrm{d}\sigma
\end{equation}
for all $f\in\m{C}_\infty(P)$, and $\m{L}_A(f)=0$ for affine-linear functions. The polytope~$P$ is called uniformly~$K$-stable if~$(P,A_0)$ is, for $A_0=\mrm{vol}(P,\,\mrm{d}\mu)/\mrm{vol}(\diff P,\,\mrm{d}\sigma)$.
\end{definition}
\begin{rmk}\label{rmk:Futaki}
The functional $\m{L}_{A_0}$ is closely related to the Futaki character of the K\"ahler class $[\omega]$: an affine linear function $f$ on the polytope corresponds to the holomorphy potential for a torus-invariant holomorphic vector field $V_f$ on $X$, and $\m{F}_{\Omega}(V_f)=\m{L}_{A_0}(f)$.
\end{rmk}
It is known that uniform~$K$-stability implies the existence of solutions to the cscK equation on the toric manifold. For the case of surfaces this was explained in a series of papers by Donaldson, culminating in \cite{Donaldson_cscKmetrics_toricsurfaces}, while the general case is a consequence of \cite{ChenCheng_existence} and \cite{He_extremal}, see also \cite{Apostolov_toric} for a detailed account of this result. Moreover, it is known that $(P,A)$-uniform stability implies a priori estimates for solutions of the prescribed scalar curvature equation (also called \emph{Abreu's equation})
\begin{equation*}
-u^{ij}_{,ij}=A.
\end{equation*}
The main technical tool to establish these estimates is~\cite[Theorem~$1.2$]{ChenCheng_estimates}, where it is shown that the norm of any solution to the prescribed scalar curvature equation on a compact K\"ahler manifold can be estimated in terms of the entropy functional~$\int\log\frac{\omega^n}{\omega_0^n}\omega^n$ and a~$\m{C}^0$-bound on the target function. In the toric setting, it is possible to obtain bounds on the entropy from uniform~$K$-stability, see~\cite[Theorem~$4.3$]{AnMinLi_prescribed_toric}. In our case the situation is slightly complicated from the fact that in~\eqref{eq:scalar_implicit_polytope} both sides of the equation depend on~$\omega$. Under some simplifying assumptions, however, the proof of~\cite[Theorem~$4.3$]{AnMinLi_prescribed_toric} gives the required result also in our case. We reproduce the proof for the reader's convenience and in order to emphasise the dependence on $\sup\abs{A(\omega)}$.
\begin{prop}[Li-Lian-Sheng \cite{AnMinLi_prescribed_toric}]\label{prop:entropy_estimate}
Assume that~$\omega\in[\omega_0]$ is a torus-invariant solution of~\eqref{eq:scalar_implicit_polytope}, and that~$(P,A(\omega))$ is~$\lambda$-uniformly~$K$-stable, with~$\lambda$ possibly depending on~$\omega$. Then the entropy
\begin{equation*}
\int_M\log\frac{\omega^n}{\omega_0^n}\omega^n
\end{equation*}
can be estimated in terms of~$\lambda$,~$\sup\abs{A(\omega)}$, and~$\omega_0$.
\end{prop}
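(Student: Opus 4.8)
The plan is to follow the argument of Chen--Li--Sheng, in the form of \cite[Theorem~4.3]{AnMinLi_prescribed_toric}, the only adjustment being that we first \emph{freeze} the target. Set $A:=A(\omega)\in L^\infty(P)$, let $u\in\m{S}(P)$ be the symplectic potential of $\omega$, $u_0$ that of $\omega_0$, and let $\tilde u=u-\ell$ be the normalisation of $u$ by the unique affine $\ell$ with $\tilde u(p_0)=\mrm{d}\tilde u(p_0)=0$; recall $\tilde u\geq0$ is convex and vanishes only at $p_0$. Consider the toric Mabuchi-type functional $\m{M}_A(v)=\int_P-\log\det(v_{,ij})\,\mrm{d}\mu+\m{L}_A(v)$ on $\m{S}(P)$. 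Since $\m{L}_A$ annihilates affine functions, $\m{M}_A(v)-\m{M}_A(\tilde v)$ depends only on $\omega_0$, and \eqref{eq:scalar_implicit_polytope} says exactly that $u$ is a critical point of $\m{M}_A$.

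The first step is to extract a boundary bound on $\tilde u$ from uniform $K$-stability. By Donaldson's integration-by-parts formula (valid for any $f\in\m{C}(\bar P)\cap\m{C}^\infty(P^\circ)$ with the boundary behaviour of a symplectic potential), $\int_P(-u^{ij}_{,ij})f\,\mrm{d}\mu=\int_{\diff P}f\,\mrm{d}\sigma-\int_P u^{ij}f_{,ij}\,\mrm{d}\mu$; taking $f=u$ and using $u^{ij}u_{,ij}=n$ gives the identity $\m{L}_A(u)=\m{L}_A(\tilde u)=n\,\mrm{vol}(P,\mrm{d}\mu)$, which depends only on $\omega_0$. Applying $\lambda$-uniform $K$-stability of $(P,A(\omega))$ to the normalised convex function $\tilde u$ yields $\m{L}_A(\tilde u)\geq\lambda\int_{\diff P}\tilde u\,\mrm{d}\sigma$, hence $\int_{\diff P}\tilde u\,\mrm{d}\sigma\leq\lambda^{-1}n\,\mrm{vol}(P,\mrm{d}\mu)$. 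By the standard estimates for normalised convex functions on a polytope (as in \cite{Donaldson_cscKmetrics_toricsurfaces, AnMinLi_prescribed_toric}), this $L^1(\diff P)$-bound, together with convexity and the normalisation at $p_0$, gives $\sup_P\tilde u\leq C(\lambda,P)$, and thereby control of $\|u-u_0\|_{L^\infty(P)}$ and of the blow-up rate of $\nabla u$ along $\diff P$, in terms of $\lambda$ and $\omega_0$.

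The second step is the energy comparison. Along the segment $u_t=(1-t)u_0+t u\in\m{S}(P)$, the map $t\mapsto-\log\det(u_{t,ij})$ is convex and $t\mapsto\m{L}_A(u_t)$ is affine, so $t\mapsto\m{M}_A(u_t)$ is convex on $[0,1]$; a direct computation, using Donaldson's formula once more, gives $\tfrac{\mrm{d}}{\mrm{d}t}\m{M}_A(u_t)\big|_{t=1}=\int_P(-u^{ij}_{,ij}-A)(u-u_0)\,\mrm{d}\mu=0$ by \eqref{eq:scalar_implicit_polytope}. A convex function on $[0,1]$ with vanishing derivative at $t=1$ attains its minimum there, so $\m{M}_A(u)\leq\m{M}_A(u_0)$. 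Now $\m{M}_A(u_0)=\int_P-\log\det((u_0)_{,ij})\,\mrm{d}\mu+\m{L}_A(u_0)$, the first term depending only on $\omega_0$ and the second obeying $|\m{L}_A(u_0)|\leq\int_{\diff P}|u_0|\,\mrm{d}\sigma+\sup_P|A(\omega)|\int_P|u_0|\,\mrm{d}\mu$, which is bounded in terms of $\omega_0$ and $\sup|A(\omega)|$. Combining with the identity $\m{L}_A(u)=n\,\mrm{vol}(P,\mrm{d}\mu)$ from the first step, $\int_P-\log\det(u_{,ij})\,\mrm{d}\mu=\m{M}_A(u)-\m{L}_A(u)\leq\m{M}_A(u_0)-n\,\mrm{vol}(P,\mrm{d}\mu)$ is bounded above in terms of $\lambda$, $\sup|A(\omega)|$ and $\omega_0$.

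Finally, one translates back: under the moment-map correspondence, $\int_M\log\tfrac{\omega^n}{\omega_0^n}\,\omega^n$ equals a fixed positive multiple of $\int_P-\log\det(u_{,ij})\,\mrm{d}\mu$ plus a correction term obtained by integrating over $P$ a fixed function built from $u_0$ composed with the gradient map of $u$; by the estimates on $u$ near $\diff P$ from the first step this correction is controlled by $\lambda$ and $\omega_0$, so the preceding bound gives an upper bound for the entropy, while the lower bound $\int_M\log\tfrac{\omega^n}{\omega_0^n}\,\omega^n\geq0$ is automatic from Jensen's inequality since $[\omega]^n=[\omega_0]^n$. The substantive ingredients — Donaldson's boundary integration by parts, the convex-function estimates, and the regularity needed for the energy comparison — are exactly those of \cite[Theorem~4.3]{AnMinLi_prescribed_toric}, and the only new point is the bookkeeping that lets $\sup|A(\omega)|$ replace $\sup|A|$ throughout. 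I expect the main technical care to be needed in the last step: making precise the boundary correction in the entropy/$(-\log\det)$ identity and verifying that the convex-function estimates of the first step genuinely suffice to control it uniformly.
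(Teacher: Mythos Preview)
Your argument is correct and closely tracks the paper's, with one genuine difference in the final translation step. Both you and the paper use that $u$ minimises the functional $\m{M}_{A(\omega)}$ (your convexity argument on the segment $u_t$ is equivalent to the paper's appeal to \cite[Proposition~3.3.4]{Donaldson_stability_toric}), and both extract the $L^1(\partial P)$-bound on the normalised potential from $\lambda$-uniform $K$-stability together with Donaldson's integration by parts (your identity $\m{L}_A(\tilde u)=n\,\mrm{vol}(P)$ is exactly the content of \cite[Corollary~2]{Donaldson_Abreu_IntEst}, which the paper simply quotes).

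Where you diverge is in passing from $-\int_P\log\det u_{,ij}\,\mrm{d}\mu$ to the entropy. You write the entropy, via Legendre duality, as $-\int_P\log\det u_{,ij}\,\mrm{d}\mu$ plus $\int_P h\circ\nabla u\,\mrm{d}\mu$ for a fixed function $h$ built from $u_0$, and then argue that this correction is controlled. This is correct in the end (since $h(x)=\log\det(\phi_0)_{,ij}(x)$ grows linearly in $|x|$ and $\int_P|\nabla u|\,\mrm{d}\mu\leq C(P)\,\|\tilde u\|_{L^\infty(P)}$ for convex $\tilde u$), but your stated justification via ``control of the blow-up rate of $\nabla u$ along $\partial P$'' is not the right mechanism: no such pointwise gradient control follows from Step~1, and it is the $L^\infty$ bound on $\tilde u$, fed through an $L^1$ gradient estimate for convex functions, that actually does the work. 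The paper avoids this issue entirely by invoking the Chen--Tian decomposition
\[
\int_M\log\frac{\omega^n}{\omega_0^n}\,\frac{\omega^n}{n!}=\m{M}(\omega)-\m{J}_{-\Ric(\omega_0)}(\omega),\qquad \m{M}(\omega)=(2\pi)^n\m{F}_{A_0}(u),
\]
then bounding $\m{F}_{A_0}(u)$ by comparison with $\m{F}_{A(\omega)}(u)$ (using $|\m{L}_{A_0}-\m{L}_{A(\omega)}|\leq\sup|A(\omega)-A_0|\int_P u\,\mrm{d}\mu$), and controlling $\m{J}_{-\Ric(\omega_0)}$ via the $d_1$-distance, which in the toric setting is simply $\int_P|u-u_0|\,\mrm{d}\mu$. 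This route sidesteps the gradient integral at the cost of importing the Chen--Tian formula and the $d_1$-characterisation; your route is more self-contained but needs the final step spelled out more carefully than you have done.
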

\begin{proof} 
Recall that the~$K$-energy functional can be decomposed as
\begin{equation*}
\m{M}(\omega)=\int_M\log\left(\frac{\omega^n}{\omega_0^n}\right)\frac{\omega^n}{n!}+\m{J}_{-\mrm{Ric}(\omega_0)}(\omega).
\end{equation*}
On a toric manifold, it was shown in~\cite{Donaldson_stability_toric} that the~$K$-energy on torus-invariant metrics can be written in terms of the corresponding symplectic potential on~$P$ as follows: define for every~$A\in L^\infty(P)$,~$f\in\m{C}^\infty(P)$ and symplectic potential~$v$
\begin{equation*}
\m{F}_{A_0}(v)=-\int_P\log\det(D^2v)\,\mrm{d}\mu+\m{L}_{A_0}(u).
\end{equation*}
Then, the~$K$-energy is
\begin{equation*}
\m{M}(\omega)=(2\pi)^n\m{F}_{A_0}(u)
\end{equation*}
where~$u$ is the symplectic potential on~$P$ corresponding to~$\omega$. These identities show that we can bound the entropy if we have estimates for~$\m{J}_{-\mrm{Ric}(\omega_0)}(\omega)$ and~$\m{F}_{A_0}(u)$. To do so, we will first give an estimate for~$\m{F}_{A(\omega)}(u)$. We will then relate~$\m{F}_{A_0}(u)$ to~$\m{F}_{A(\omega)}(u)$. The inequalities involved will also allow us to give an estimate for~$\m{J}_{-\mrm{Ric}(\omega_0)}(\omega)$.

Let~$u_0$ be the (normalized) symplectic potential corresponding to~$\omega_0$. As~$u$ solves equation~\eqref{eq:scalar_implicit_polytope},~$u$ is a minimizer of~$\m{F}_{A(\omega)}$ on the set on normalized potentials by~\cite[Proposition~$3.3.4$]{Donaldson_stability_toric}, and uniform~$K$-stability of~$(P,A(\omega))$ gives
\begin{equation*}
\m{F}_{A(\omega)}(u)\leq\m{F}_{A(\omega)}(u_0)\leq C(\omega_0,\lambda).
\end{equation*}
The proof of~\cite[Proposition~$5.1.2$]{Donaldson_stability_toric} shows that~$\m{F}_{A(\omega)}$ can also be bounded below in terms of~$\lambda$,~$\sup\abs{A(\omega)-A_0}$ and~$\omega_0$.

From the bound~$\abs*{\m{F}_{A(\omega)}(u)}<C(\omega_0,\lambda,\sup\abs{A(\omega)})$ we can obtain a similar estimate for~$\m{F}_{A_0}(u)$, but we first need some preliminary inequalities. By~\cite[Lemma~$5.1.3$]{Donaldson_stability_toric} there is a constant~$C$ such that, for any normalized symplectic potential~$v$,
\begin{equation}\label{eq:stima_interno}
\int_Pv\,\mrm{d}\mu\leq C\int_{\diff P}v\,\mrm{d}\sigma.
\end{equation}
As~$u$ satisfies Abreu's equation~\eqref{eq:scalar_implicit_polytope},~\cite[Corollary~$2$]{Donaldson_Abreu_IntEst} shows
\begin{equation}\label{eq:stima_bordo}
\int_{\diff P}u\,\mrm{d}\sigma\leq\frac{n}{\lambda}.
\end{equation}
Then we can estimate~$\m{F}_{A_0}(u)$ combining~\eqref{eq:stima_interno} and~\eqref{eq:stima_bordo}
\begin{equation*}
\begin{split}
\abs*{\m{F}_{A_0}(u)}=&\abs*{\m{F}_{A_0}(u)-\m{F}_{A(\omega)}(u)+\m{F}_{A(\omega)}(u)}\leq\abs*{\m{L}_{A_0}(u)-\m{L}_{A(\omega)}(u)}+\abs*{\m{F}_{A(\omega)}(u)}\\
\leq&\sup\abs*{A(\omega)-A_0}\int_Pu\,\mrm{d}\mu+\abs*{\m{F}_{A(\omega)}(u)}\leq C(\omega_0,\lambda,\sup\abs{A(\omega)}).
\end{split}
\end{equation*}

It remains to show that~$\m{J}_{-\mrm{Ric}(\omega_0)}(\omega)$ can also be bounded by~$\omega_0$,~$\lambda$, and~$\sup\abs{A(\omega)}$. The~$\m{J}$-functional can be estimated in terms of the~$d_1$-distance on K\"ahler potentials, see for example~\cite[Lemma~$4.4$]{ChenCheng_existence}. Letting~$\omega=\omega_0+\I\diff\bdiff\varphi$ for a potential~$\varphi$ then we can write
\begin{equation*}
\abs*{\m{J}_{-\mrm{Ric}(\omega_0)}(\omega)}\leq C(\omega_0)\,d_1(0,\varphi)
\end{equation*}
for a constant depending only on the background metric. On the other hand, the~$d_1$-distance has a particularly simple expression in the toric setting, as geodesics in the space of torus-invariant K\"ahler potentials on $X$ correspond to line segments in $\m{S}(P)$ under the Legendre transform. Recall that symplectic potentials corresponding to~$\omega_0$ and~$\omega$ are~$u_0$ and~$u$, respectively. Then we have
\begin{equation*}
d_1(0,\varphi)=\left(\frac{\pi}{2}\right)^n\int_P\abs{u_0-u}\mrm{d}\mu
\end{equation*}
so we can estimate~$\m{J}$ by
\begin{equation*}
\abs*{\m{J}_{-\mrm{Ric}(\omega_0)}(\omega)}\leq C(\omega_0)\int_P\abs{u_0-u}\mrm{d}\mu\leq C(\omega_0,\lambda)
\end{equation*}
using~\eqref{eq:stima_interno} and~\eqref{eq:stima_bordo}. Together with the estimate on~$\m{F}_{A_0}(u)$, this shows that the entropy functional is bounded in terms of~$\sup\abs{A(\omega)}$,~$\lambda$, and the background metric.
\end{proof}
\begin{lemma}\label{lemma:unifKstab_alpha}
Assume that~$P$ is~$\lambda$-uniformly~$K$-stable, and that there exists a constant $R>0$ such that, for every~$\omega\in[\omega_0]$ and a solution~$B(\omega)$ of the dHYM equation, we have
\begin{equation}\label{eq:radius_bounds}
r_\omega(B(\omega)) <R.
\end{equation}
If $\alpha$ satisfies
\begin{equation*}
\alpha<\frac{A_0\,\lambda}{4(1-\lambda)(R-\inf \tilde{A} )}
\end{equation*}
then~$(P,A(\omega))$ is~$\lambda'$-uniformly~$K$-stable, where~$\lambda'$ depends only on~$\lambda$,~$\alpha$ and~$R$.
\end{lemma}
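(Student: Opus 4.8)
The plan is to bound $\m{L}_{A(\omega)}(f)$ from below by $\lambda' \int_{\diff P} f \, \mrm{d}\sigma$ for all $f \in \m{C}_\infty(P)$, directly from the definition of uniform $K$-stability, using the $\lambda$-uniform stability of $(P,A_0)$ as the input. First I would write $A(\omega) = A_0 + \alpha(r_\omega(B(\omega)) - \tilde{A})$ and split
\begin{equation*}
\m{L}_{A(\omega)}(f) = \m{L}_{A_0}(f) - \alpha \int_P \left(r_\omega(B(\omega)) - \tilde{A}\right) f \, \mrm{d}\mu.
\end{equation*}
The first term is $> \lambda \int_{\diff P} f \, \mrm{d}\sigma$ by hypothesis. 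For the second term, the key observation is that for a normalized convex function $f \in \m{C}_\infty(P)$ one has $f \geq 0$ on $P$ (since $f(p_0) = \mrm{d}f(p_0) = 0$ and $f$ is convex), so $f \, \mrm{d}\mu$ is a positive measure, and $0 \le r_\omega(B(\omega)) - \tilde{A} \le R - \inf \tilde{A}$ using \eqref{eq:radius_bounds} and $r_\omega \ge 0$ (in fact $r_\omega(B(\omega)) \ge 0$, so the lower bound on $r_\omega - \tilde A$ is $-\sup \tilde A$, but what is needed is the upper bound). Hence $\abs{\int_P (r_\omega(B(\omega)) - \tilde{A}) f \, \mrm{d}\mu} \le (R - \inf \tilde{A}) \int_P f \, \mrm{d}\mu$.

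Next I would use the interior-boundary comparison: since affine-linear functions contribute zero to $\m{L}_{A_0}$ (the vanishing Futaki hypothesis via Remark~\ref{rmk:Futaki}) and $A_0 = \mrm{vol}(P)/\mrm{vol}(\diff P)$, the inequality \eqref{eq:K_uniform} for the polytope itself, applied after subtracting the affine part, together with $\int_P f \, \mrm{d}\mu \le A_0^{-1} \int_{\diff P} f \, \mrm{d}\sigma$ — which is exactly $\m{L}_{A_0}(f) \ge 0$ rewritten — gives $\int_P f \, \mrm{d}\mu \le A_0^{-1}\int_{\diff P} f \, \mrm{d}\sigma$. Wait, more carefully: $\m{L}_{A_0}(f) = \int_{\diff P} f \, \mrm{d}\sigma - A_0 \int_P f \, \mrm{d}\mu > 0$ already encodes this, and in fact the uniform version gives $A_0 \int_P f \, \mrm{d}\mu < (1-\lambda) \int_{\diff P} f \, \mrm{d}\sigma$. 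Combining, for the target inequality I need
\begin{equation*}
\m{L}_{A(\omega)}(f) \ge \lambda \int_{\diff P} f \, \mrm{d}\sigma - \alpha (R - \inf \tilde A) \cdot \frac{1-\lambda}{A_0} \int_{\diff P} f \, \mrm{d}\sigma = \left(\lambda - \frac{\alpha(R - \inf \tilde A)(1-\lambda)}{A_0}\right)\int_{\diff P} f \, \mrm{d}\sigma.
\end{equation*}
The coefficient is positive precisely when $\alpha < \frac{A_0 \lambda}{(1-\lambda)(R - \inf \tilde A)}$; the factor of $4$ in the stated bound presumably comes from a normalization in one of the constants ($A_0 = 4\hat s$, $\alpha = 4|\gamma|$), so I would track constants carefully to match. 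Setting $\lambda' $ equal to this coefficient (or a fixed fraction of it) completes the proof, and $\lambda'$ manifestly depends only on $\lambda$, $\alpha$, $R$ (and $A_0$, $\inf\tilde A$, which are topological).

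I would also record the vanishing of $\m{L}_{A(\omega)}$ on affine-linear functions: this holds because $\m{L}_{A_0}$ vanishes on them and, for affine $f$, the extra term $\alpha \int_P (r_\omega(B(\omega)) - \tilde A) f \, \mrm{d}\mu$ vanishes by the very definition of $\tilde A$ as the dual of $\m{F}_{[\omega^{\C}],|\gamma|}$ — i.e. $\tilde A$ is chosen so that $\int_P (r_\omega(B(\omega)) - \tilde A) f \, \mrm{d}\mu = 0$ for affine $f$, which is exactly the condition that makes the right-hand side $A(\omega)$ of the scalar curvature equation satisfy the appropriate Futaki-type constraint. The main obstacle I anticipate is not any single inequality but bookkeeping: making sure the sign conventions in $r_\omega - \tilde A \ge 0$, the direction of the comparison $\m{L}_{A_0}(f) > 0$, and the factor-of-$4$ normalizations all line up so the final threshold is exactly $\frac{A_0\lambda}{4(1-\lambda)(R - \inf\tilde A)}$; a secondary subtlety is confirming that $f \ge 0$ suffices (it does, since we only ever need the upper bound $r_\omega - \tilde A \le R - \inf\tilde A$ against the positive measure $f\,\mrm{d}\mu$) rather than needing two-sided control on $r_\omega$.
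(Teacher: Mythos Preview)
Your approach is essentially identical to the paper's: split $\m{L}_{A(\omega)}(f)=\m{L}_{A_0}(f)-\alpha\int_P(r_\omega-\tilde A)f\,\mrm{d}\mu$, use $f\ge 0$ and the pointwise bound $r_\omega-\tilde A\le R-\inf\tilde A$, then convert $\int_P f\,\mrm{d}\mu$ into a boundary integral via the rearranged uniform stability inequality $A_0\int_P f\,\mrm{d}\mu<(1-\lambda)\int_{\diff P}f\,\mrm{d}\sigma$, arriving at $\lambda'=\lambda-\alpha(R-\inf\tilde A)(1-\lambda)/A_0$ (the paper carries an extra factor of $4$ throughout its proof, matching the $4$ in the statement; your suspicion that this is a normalisation artefact is correct). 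Your handling of the affine-linear vanishing and the observation that only the upper bound on $r_\omega-\tilde A$ is needed are also exactly right.
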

\begin{rmk}
If we allowed~$\alpha<0$, the same conclusion would hold \emph{without} assuming the bound \eqref{eq:radius_bounds} on the radius.
\end{rmk}
\begin{proof}[Proof of Lemma \ref{lemma:unifKstab_alpha}]
Let~$f$ be a normalized convex function on the polytope. As~$P$ is~$\lambda$-uniformly~$K$-stable, we have
\begin{equation*}
\begin{split}
\int_{\diff P}f\,\mrm{d}\sigma-\int_PA(\omega)f\,\mrm{d}\mu=&\int_{\diff P}f\,\mrm{d}\sigma-\int_PA_0f\,\mrm{d}\mu-4\,\alpha\int_P\left(r_\omega(B(\omega))-\tilde{A}\right)f\,\mrm{d}\mu>\\
>&\lambda\int_{\diff P}f\,\mrm{d}\sigma-4\,\alpha\int_P\left(r_\omega(B(\omega))-\tilde{A}\right)f\,\mrm{d}\mu.
\end{split}
\end{equation*}
Notice that~$\lambda<1$, as~$A_0>0$ and any normalized function is nonnegative. Using \eqref{eq:radius_bounds}, $\alpha>0$, this implies
\begin{equation*}
\begin{split}
\int_{\diff P}f\,\mrm{d}\sigma-\int_PA(\omega)f\,\mrm{d}\mu>&\lambda\int_{\diff P}f\,\mrm{d}\sigma-4\frac{\alpha(R-\inf \tilde{A})}{A_0}\int_PA_0f\,\mrm{d}\mu.
\end{split}
\end{equation*}
We can use again the~$\lambda$-uniform~$K$-stability of~$P$ to finally obtain
\begin{equation*}
\begin{split}
\int_{\diff P}f\,\mrm{d}\sigma-\int_PA(\omega)f\,\mrm{d}\mu>&\lambda\int_{\diff P}f\,\mrm{d}\sigma+4\frac{\alpha(R-\inf \tilde{A} )}{A_0}(\lambda-1)\int_{\diff P}f\,\mrm{d}\sigma
\end{split}
\end{equation*}
so that to deduce~$(P,A(\omega))$-uniform~$K$-stability it will be sufficient that
\begin{equation*}
\lambda'=\lambda+4\frac{\alpha(R-\inf \tilde{A})}{A_0}(\lambda-1)>0.\qedhere
\end{equation*}
\end{proof}
\section{Openness}\label{opennessSec}
This Section proves the openness result for the cscK equation with $B$-field \eqref{modelEqIntro}, which is used in the proofs of Theorem \ref{MainThmIntro} and Corollary \ref{MainCorIntro}.

We need to consider the linearisation of \eqref{extrEqIntro}, in the toric case, assuming that the classical Futaki character $\m{F}_{\omega}$ vanishes. Let us denote by $\Omega$ the set of K\"ahler forms in the fixed cohomology class $[\omega]$. It will be useful to introduce the operator $Q:\Omega\to\m{A}^{2n}(X)$ given by
\begin{equation*}
Q(\omega)=\left(s(\omega)-\xi\right)\omega^n-|\gamma|\,\Rea\left(\mrm{e}^{-\I\hat{\vartheta}}\left(\omega+\I B(\omega)\right)^n\right)
\end{equation*}
where as usual $B(\omega)$ is the unique solution of the dHYM equation, which is well defined in our case, so that the real part of \eqref{extrEqIntro} is equivalent to $Q(\omega)=0$. The constant $\hat{\vartheta}$ and function $\xi$ are such that $Q(\omega)$ integrates to zero on $X$; we denote by $\m{A}^{2n}_0(X)$ the set of such top-degree forms. Note that $\hat{\vartheta}$ and $c$ depend only on the classes $[B]$ and $[\omega]$, and on the coupling constant $|\gamma|$.

For $\varepsilon>0$, we consider the rescaled system of equations, to be solved for $B$ and $\omega$ in fixed cohomology classes,
\begin{equation}\label{eq:dHYMcscK_R}
\begin{dcases}
\frac{1}{\varepsilon}\Imm\left(\mrm{e}^{-\I\hat{\vartheta}_\varepsilon}\left(\omega+\I\,\varepsilon B\right)^n\right)=0\\
\left(s(\omega)-\xi_\varepsilon\right)\omega^n-|\gamma|\,\Rea\left(\mrm{e}^{-\I\hat{\vartheta}_\varepsilon}\left(\omega+\I\,\varepsilon B\right)^n\right)=0.
\end{dcases}
\end{equation}
For our applications, we are interested in the behaviour of \eqref{eq:dHYMcscK_R} for $\varepsilon\ll 1$. By \cite[Section $2.7$]{SchlitzerStoppa}, we can write \eqref{eq:dHYMcscK_R} in the form
\begin{equation}\label{eq:dHYMcscK_infty}
\begin{dcases}
\left(\Lambda_\omega B- z \right)\omega^n+O(\varepsilon)=0\\
\left(s(\omega)-\hat{s}\right)\omega^n+O(\varepsilon)=0.
\end{dcases}
\end{equation}
If we let $B_\varepsilon(\omega)$ be the solution to the rescaled dHYM equation $\Imm\left(\mrm{e}^{-\I\hat{\vartheta}_\varepsilon}\left(\omega+\I\,\varepsilon B\right)^n\right)=0$ then $B_\varepsilon(\omega)$ converges, for $\varepsilon\to 0$, to the solution of $\Lambda_\omega B =z$. So, if we consider the operator
\begin{equation*}
Q_{\varepsilon}(\omega)=\left(s(\omega)-\xi_{\varepsilon}\right)\omega^n-|\gamma| \Rea\left(\mrm{e}^{-\I\hat{\vartheta}}\left(\omega+\I\,\varepsilon B_\varepsilon(\omega)\right)^n\right)
\end{equation*}
the asymptotic expansion gives $Q_\varepsilon(\omega)-Q_0(\omega)=O(\varepsilon)$, for
\begin{equation*}
Q_0(\omega)=\left(s(\omega)-\hat{s}\right)\omega^n.
\end{equation*}
Let us specialise to the toric case. By a slight abuse of notation, we still denote by $Q_\varepsilon$, $Q_0$ the functionals defined on $\m{S}(P)$ by $u\mapsto Q_\varepsilon(\omega(u))$. Consider in particular the differential of $Q_0$,
\begin{equation*}
\left(DQ_0\right)_{u}:T_u\m{S}(P)=\m{C}^\infty(P)\to\m{A}^{2n}_0(X).
\end{equation*}
Any $u\in\m{S}(P)$ defines a K\"ahler form $\omega(u)$, we can use $u$ to normalize functions on $X$, thus identifying $\m{A}^{2n}_0(X)$ with $\m{C}^\infty_0(X)$. Notice that this normalization is equivalent to identifying $\m{A}^{2n}_0(X)$ with the space $\m{C}^\infty_0(P)$ of smooth functions on $P$ with vanishing integral with respect to the Lebesgue measure.

We recall a simple feature of the linearization of Abreu's equation, a consequence of the integration by parts formula in \cite[Lemma $3.3.5$]{Donaldson_stability_toric}.
\begin{lemma}\label{lem:Qinfty_invert}
Assume that the Futaki invariant of $P$ vanishes. Then the linearization of $Q_0$ around any $u\in\m{S}(P)$ is surjective on the space $\mrm{Aff}(P)^\perp$ of smooth functions on $P$ that are $L^2$-orthogonal to affine-linear functions.
\end{lemma}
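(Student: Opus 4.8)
The plan is to transport $(DQ_0)_u$ to the momentum polytope, recognise it there as the Hessian of a convex functional on symplectic potentials, and conclude by elliptic Fredholm theory. Under the $u$-normalisation of $\m{A}^{2n}_0(X)$ recalled above, $Q_0$ corresponds, up to a fixed positive factor which we suppress, to the Abreu operator $u\mapsto -u^{ij}_{,ij}-A_0$ on $\m{S}(P)$; hence, for $\psi\in T_u\m{S}(P)=\m{C}^\infty(P)$, the operator $(DQ_0)_u$ is given in the polytope picture by
\begin{equation*}
(DQ_0)_u(\psi)=(u^{ik}u^{jl}\psi_{kl})_{,ij},
\end{equation*}
a fourth order linear operator with leading symbol $(u^{ij}\xi_i\xi_j)^2>0$, hence elliptic. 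Since the term $\m{L}_{A_0}$ is affine in $u$ and contributes nothing to second variations, Donaldson's integration by parts formula~\cite[Lemma~3.3.5]{Donaldson_stability_toric} — which computes the second variation of $-\int_P\log\det(D^2u)\,\mrm{d}\mu$ — gives, with no boundary terms,
\begin{equation*}
\int_P(DQ_0)_u(\psi)\,\phi\,\mrm{d}\mu=\int_P u^{ik}u^{jl}\psi_{kl}\phi_{ij}\,\mrm{d}\mu
\end{equation*}
for all $\psi,\phi\in\m{C}^\infty(P)$. In particular $(DQ_0)_u$ is formally self-adjoint for the $L^2(P,\mrm{d}\mu)$ pairing (equivalently, $Q_0$ is minus the gradient of the Mabuchi $K$-energy $\m{M}$, whose Hessian is symmetric), with positive semi-definite quadratic form.

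From the displayed formula one reads off the kernel: if $(DQ_0)_u(\psi)=0$, pairing with $\psi$ gives $\int_P u^{ik}u^{jl}\psi_{kl}\psi_{ij}\,\mrm{d}\mu=0$, and since the matrix $(u^{ij})$ is positive definite this forces the Hessian $D^2\psi$ to vanish identically, i.e.\ $\psi\in\mrm{Aff}(P)$; conversely every affine-linear function is annihilated. Thus $\ker(DQ_0)_u=\mrm{Aff}(P)$, precisely the torus-invariant holomorphy potentials. (The vanishing of the classical Futaki invariant is the standing hypothesis of this section; via Remark~\ref{rmk:Futaki} it records that $\m{L}_{A_0}$, equivalently the Futaki character, annihilates $\mrm{Aff}(P)$, consistently with $\mrm{Aff}(P)\subseteq\ker(DQ_0)_u$.) Note that $\mrm{Aff}(P)^\perp\subseteq\m{C}^\infty_0(P)$, as constants are affine-linear.

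It remains to prove surjectivity onto $\mrm{Aff}(P)^\perp$, which I would deduce from Fredholm theory. The operator $(DQ_0)_u$ is the restriction to torus-invariant functions of a genuine (boundaryless) fourth order, elliptic, formally self-adjoint operator on the compact manifold $X$ — the $\omega(u)^n$-normalised differential of $\omega\mapsto(s(\omega)-\hat{s})\,\omega^n$ — which is therefore Fredholm, with closed range equal to the $L^2$-orthogonal complement of its kernel. Since it preserves the subspace of torus-invariant functions and, by elliptic regularity, its kernel consists of smooth functions, the image of $(DQ_0)_u$ on smooth torus-invariant functions equals the $L^2(P,\mrm{d}\mu)$-orthogonal complement of $\ker(DQ_0)_u=\mrm{Aff}(P)$, namely $\mrm{Aff}(P)^\perp$, as claimed.

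The main obstacle is this last step: one must ensure that the passage between torus-invariant objects on $X$ and functions on $P$ is compatible at the level of the H\"older (or Sobolev) completions needed to run Fredholm theory — and with the $u$-normalisation identifying $\m{A}^{2n}_0(X)$ with $\m{C}^\infty_0(P)$ — so that the degeneracy of $(u^{ik}u^{jl}\psi_{kl})_{,ij}$ along $\diff P$ introduces no spurious kernel or cokernel; alternatively one can set up the Fredholm theory for this operator directly on $P$ in suitable weighted spaces, as in the now standard analysis of Abreu's equation and the toric cscK problem. Granting this, the result is a short computation together with the self-adjointness built into Donaldson's integration by parts.
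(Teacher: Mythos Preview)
Your proposal is correct and takes essentially the same approach as the paper, which gives no proof beyond the single sentence that the result is ``a consequence of the integration by parts formula in~\cite[Lemma~$3.3.5$]{Donaldson_stability_toric}''; your argument is a faithful and complete unpacking of that reference. One small remark: as you implicitly notice, the Futaki-vanishing hypothesis plays no role in the surjectivity argument itself and is present in the statement only as the ambient assumption of the section.
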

As being surjective is an open property for bounded operators on Banach spaces, we can deduce openness along the continuity paths \eqref{C0path2d}, \eqref{C0path3d}.
\begin{prop}
Consider the family of operators $Q_{\varepsilon,t}:\m{S}(P)\to\m{C}^\infty(P)$ corresponding to the continuity paths \eqref{C0method2d}, \eqref{C0path3d}. Then, for $\varepsilon\ll 1$, the set of $t\in[0,1]$ for which there exists solutions of the equation $Q_{\varepsilon,t}(u)=0$ is open.
\end{prop}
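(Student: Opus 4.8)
The plan is to deduce openness from the implicit function theorem in H\"older spaces, by combining Lemma~\ref{lem:Qinfty_invert} with the fact that each $Q_{\varepsilon,t}$ is a small perturbation of $Q_0$. First I would fix $k\geq 4$ and $\beta\in(0,1)$ and regard $Q_{\varepsilon,t}$ as a $\m{C}^1$ map from a neighbourhood of $\m{S}(P)$ inside the Banach space of torus-invariant $\m{C}^{k,\beta}$ symplectic potentials (with the usual boundary behaviour) to the space $\mrm{Aff}(P)^\perp$ of $\m{C}^{k-4,\beta}$ functions on $P$ which are $L^2$-orthogonal to affine-linear functions, the vanishing of $Q_{\varepsilon,t}$ encoding the real part of \eqref{extrEqIntro} along the relevant continuity path \eqref{C0path2d} or \eqref{C0path3d} (the extremal potential $\xi_{\varepsilon,t}$ absorbing the affine component of the equation). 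For this one needs the rescaled dHYM solution $B_\varepsilon(\omega)$ to depend on $\omega$ in a $\m{C}^1$ way: this follows from the implicit function theorem applied to the dHYM equation itself, whose linearisation is invertible under the semipositivity assumptions of Sections~\ref{C0method2d} and~\ref{ThreefoldsSec}, where the elementary bounds \eqref{traceBound2d}, \eqref{BasicBound3d} hold, with uniform control along the path.

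Next I would record that $Q_{\varepsilon,t}\to Q_0$ in $\m{C}^1$ as $\varepsilon\to 0$, uniformly in $t\in[0,1]$. By the asymptotic expansions of \cite[Section~$2.7$]{SchlitzerStoppa} one has $Q_{\varepsilon,t}(u)-Q_0(u)=O(\varepsilon)$ and, differentiating,
\[
\left\|(DQ_{\varepsilon,t})_u-(DQ_0)_u\right\|=O(\varepsilon)
\]
locally uniformly in $u$ and uniformly in $t$. Now let $t_0\in[0,1]$ be a solution time and $u_0\in\m{S}(P)$ a solution of $Q_{\varepsilon,t_0}(u_0)=0$. By the a priori estimates of Section~\ref{ToricEstimatesSec} (Proposition~\ref{prop:entropy_estimate} together with \cite[Theorem~$1.2$]{ChenCheng_estimates}), $u_0$ lies in a bounded subset $\m{K}\subset\m{S}(P)$ in $\m{C}^{k,\beta}$, independent of $t_0$ and of sufficiently small $\varepsilon$. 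On $\m{K}$ Abreu's linearisation $(DQ_0)_{u_0}$ is uniformly elliptic with uniformly controlled coefficients, so by Lemma~\ref{lem:Qinfty_invert} it is surjective onto $\mrm{Aff}(P)^\perp$ with a right inverse bounded uniformly for $u_0\in\m{K}$.

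Since surjectivity with a uniformly bounded right inverse is stable under perturbations small in operator norm, the last two steps produce $\varepsilon_0>0$, depending only on $\m{K}$, such that for $\varepsilon<\varepsilon_0$ and every solution time $t_0$ the operator $(DQ_{\varepsilon,t_0})_{u_0}$ is surjective onto $\mrm{Aff}(P)^\perp$ with a bounded right inverse. Splitting $T_{u_0}\m{S}(P)=\ker(DQ_{\varepsilon,t_0})_{u_0}\oplus W$, the restriction $(DQ_{\varepsilon,t_0})_{u_0}|_W$ is an isomorphism, so applying the implicit function theorem to $(t,u)\mapsto Q_{\varepsilon,t}(u)$ on $[0,1]\times(u_0+W)$ yields, for $t$ near $t_0$, a $\m{C}^{k,\beta}$ solution $u(t)$ of $Q_{\varepsilon,t}(u(t))=0$; elliptic regularity for Abreu's equation makes $u(t)$ smooth. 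Hence a neighbourhood of $t_0$ consists of solution times, which is the assertion. I expect the main obstacle to be the second step: ensuring that $B_\varepsilon(\omega)$ depends on $\omega$ in a $\m{C}^1$ fashion with the $O(\varepsilon)$ estimate on its differential holding \emph{uniformly in $t$}, which is precisely where the elementary a priori bounds on $\Lambda_\omega B$, and hence the restriction to surfaces and threefolds, are used.
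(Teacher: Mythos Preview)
Your proposal is correct and follows essentially the same route as the paper: apply the implicit function theorem in H\"older spaces, using Lemma~\ref{lem:Qinfty_invert} to get surjectivity of $(DQ_0)_u$ onto $\mrm{Aff}(P)^\perp$, the expansion $DQ_{\varepsilon,t}=DQ_0+O(\varepsilon)$ to transfer this to $(DQ_{\varepsilon,t})_u$ for small $\varepsilon$, and the a priori estimates of Sections~\ref{SurfacesSec}--\ref{ToricEstimatesSec} for uniformity in $t$, with elliptic regularity upgrading the $\m{C}^{k,\beta}$ solution to a smooth one. Your write-up is in fact somewhat more careful than the paper's, in that you make explicit the need for $\m{C}^1$ dependence of $B_\varepsilon(\omega)$ on $\omega$ (with an $O(\varepsilon)$ bound on its differential, uniformly in $t$), which the paper absorbs into the single line $DQ_{\varepsilon,t}=DQ_{0,t}+O(\varepsilon)$.
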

\begin{proof} Fix $\bar{t}\in[0,1]$ for which $Q_{\varepsilon,\bar{t}}(u)=0$ is solvable. We claim that for sufficiently small $\varepsilon$, uniformly in $\bar{t}$, the map
\begin{equation*}
\left(DQ_{\varepsilon, \bar{t}}\right)_{\omega_{\bar{t}}}\!:\m{C}^\infty(P)\to\mrm{Aff}(P)^\perp
\end{equation*}
is surjective. The following version of the Implicit Function Theorem (IFT) will then guarantee that, for $t$ close enough to $\bar{t}$, the equation $Q_{\varepsilon,t}=0$ is solvable. 
\begin{thm}[Implicit Function Theorem]
Let $F:X\times Y\to Z$ be a smooth map between Banach spaces. Let $(x,y)\in X\times Y$ be a point such that $F(x,y)=0$ and $D_YF_{x,y}:TY\to TZ$ is surjective. Then there are neighbourhoods $U$ of $x$ and $V$ of $y$ such that for every $x'\in U$ there exists $y'\in V$ such that $F(x',y')=0$.
\end{thm}
This can be obtained from the usual Inverse Function Theorem for Banach space surjections applied to the operator $\hat{F}(x,y):=\left(x,F(x,y)\right)$. To apply the IFT we should first extend the domain of $Q_{\varepsilon,t}$ to a Banach space; we can assume that our operators are defined for $2$-forms $B$ and symplectic potentials $u$ with $\m{C}^{k,\alpha}$ regularity, for some $k\geq 2$, $0<\alpha<1$. The IFT will just guarantee the existence of solutions with $\m{C}^{k,\alpha}$ regularity, but the regularity theory for elliptic equations tells us that a $\m{C}^{k,\alpha}$-solution will actually be smooth.

As for the claim, recall we have
\begin{equation*}
Q_{\varepsilon,t}=-u^{ij}_{,ij} - \hat{s} + O(\varepsilon),\,DQ_{\varepsilon,t}=DQ_{0,t}+O(\varepsilon).
\end{equation*}
Lemma \ref{lem:Qinfty_invert} tells us that the differential of $Q_{0,t}$ at the point $u_0\in\m{S}(P)$ is surjective on $\mrm{Aff}(P)^\perp$. In particular, $DQ_{0,t}$ has a right inverse, and so $ DQ_{\varepsilon,t}$ will also have a right inverse, if $\norm*{  DQ_{\varepsilon,t}-DQ_{0,t}}$ is small enough, depending on the norm of a right inverse for $DQ_{0,t}$. As $  DQ_{\varepsilon,t}-DQ_{0,t}=O(\varepsilon)$, the operator $DQ_{\varepsilon,t}$ will have a right inverse for small enough $\varepsilon$. Finally, by our estimates in Sections \ref{SurfacesSec}, \ref{ThreefoldsSec}, $\varepsilon$ can be chosen uniformly with respect to $\bar{t}$. 
\end{proof}

\section{A moment map for general~$B$-field classes}\label{momentMapSec}

In this Section we explain and prove our moment map interpretation for~\eqref{modelEqIntro} in the case of non-Hodge~$B$-field classes, Theorem~\ref{momentMapThm}. While the general setup is close to~\cite{AlvarezGarciaGarcia_KYM, CollinsYau_dHYM_momentmap, SchlitzerStoppa}, the details in this general case are rather technical.

We start by recalling the interpretation of the dHYM equation as a moment map form~\cite{CollinsYau_dHYM_momentmap}. We slightly generalize the construction, in order to avoid assuming that~$[\beta]$ is a Hodge class.

Consider the space~$\m{A}:=\m{A}^1(X,\bb{R})$, and for a fixed~$2$-form~$\beta$ define a map~$F:\m{A}\to[\beta]$ by~$ F(y)=\beta+\mrm{d}y$. We can define a (closed)~$2$-form on~$\m{A}$ by
\begin{equation*}
\Omega_y(\alpha_1,\alpha_2)=-n\int_X\alpha_1\wedge\alpha_2\wedge\Rea\left(\mrm{e}^{-\ii\hat{\vartheta}}\left(\omega+\ii F(y)\right)^{n-1}\right).
\end{equation*}
At least formally, in a neighbourhood of a solution~$y_0\in\m{A}$ of dHYM the~$2$-form~$\Omega$ is nondegenerate (see~\cite{CollinsYau_dHYM_momentmap, Dervan_Zconnections}).
\begin{lemma}
The (commutative) Lie algebra~$\f{g}:=\m{C}^\infty(X,\bb{R})$ acts on~$\m{A}$ by
\begin{equation*}
\hat{\varphi}_y:=\mathrm{d}\varphi\in T_y\m{A}
\end{equation*}
and the action is Hamiltonian, with equivariant moment map
\begin{equation*}
\mu(y)=\Imm\left(\mrm{e}^{-\ii\hat{\vartheta}}\left(\omega+\ii F(y)\right)^n\right)
\end{equation*}
where we are using the natural pairing between~$\m{C}^\infty(X,\bb{R})$ and~$2n$-forms to identify the moment map with a top-degree form. 
\end{lemma}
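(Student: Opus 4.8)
The plan is to verify in turn that (i) $\varphi\mapsto\hat\varphi$ is a Lie algebra action of $\f{g}=\m{C}^\infty(X,\bb{R})$ on $\m{A}$, (ii) this action preserves the $2$-form $\Omega$, (iii) the moment map identity $\mrm{d}\langle\mu,\varphi\rangle=\iota_{\hat\varphi}\Omega$ holds, and (iv) $\mu$ is equivariant. Steps (i) and (iv) are formal. Since $\m{A}=\m{A}^1(X,\bb{R})$ is an affine space, $T_y\m{A}=\m{A}^1(X,\bb{R})$ for every $y$ and $\hat\varphi_y=\mrm{d}\varphi$ genuinely lies in it; as $\hat\varphi_y$ is moreover independent of the base point $y$ and $\f{g}$ is abelian, $\varphi\mapsto\hat\varphi$ is linear and intertwines the (vanishing) brackets on both sides, giving (i). For an abelian Lie algebra, equivariance of $\mu\colon\m{A}\to\f{g}^\vee$ amounts to invariance of $\mu$ along orbits, which is immediate from $F(y+\mrm{d}\psi)=\beta+\mrm{d}(y+\mrm{d}\psi)=\beta+\mrm{d}y=F(y)$, so (iv) holds. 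Finally, granting (iii), Cartan's formula gives $\m{L}_{\hat\varphi}\Omega=\mrm{d}\,\iota_{\hat\varphi}\Omega+\iota_{\hat\varphi}\mrm{d}\Omega=0$ since $\Omega$ is closed, which yields (ii). So the content is in (iii).

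For (iii) I would argue by direct computation. Fix $y\in\m{A}$, a tangent vector $\alpha\in T_y\m{A}$, and $\varphi\in\f{g}$. Since $DF_y(\alpha)=\mrm{d}\alpha$, differentiating $\mu(y)=\Imm\bigl(\mrm{e}^{-\ii\hat{\vartheta}}(\omega+\ii F(y))^n\bigr)$ yields
\[
D\mu_y(\alpha)=n\,\Imm\!\bigl(\ii\,\mrm{e}^{-\ii\hat{\vartheta}}(\omega+\ii F(y))^{n-1}\wedge\mrm{d}\alpha\bigr)=n\,\eta\wedge\mrm{d}\alpha,\qquad \eta:=\Rea\!\bigl(\mrm{e}^{-\ii\hat{\vartheta}}(\omega+\ii F(y))^{n-1}\bigr),
\]
where I used that $\mrm{d}\alpha$ is a real form and the identity $\Imm(\ii z)=\Rea z$. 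The essential point is that $\eta$ is a \emph{closed} $(2n-2)$-form, because $\omega$ and $F(y)$ are both closed. Pairing with $\varphi$ and integrating by parts on the closed manifold $X$ --- via Stokes applied to $\mrm{d}(\varphi\,\alpha\wedge\eta)=\mrm{d}\varphi\wedge\alpha\wedge\eta+\varphi\,\mrm{d}\alpha\wedge\eta$, the remaining term $\varphi\,\alpha\wedge\mrm{d}\eta$ vanishing --- gives
\[
\langle D\mu_y(\alpha),\varphi\rangle=n\int_X\varphi\,\mrm{d}\alpha\wedge\eta=-n\int_X\mrm{d}\varphi\wedge\alpha\wedge\eta=\Omega_y(\hat\varphi_y,\alpha),
\]
which is exactly the moment map identity, for the sign convention built into the definition of $\Omega$. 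In particular $\iota_{\hat\varphi}\Omega$ is the differential of the function $y\mapsto\langle\mu(y),\varphi\rangle$, so each $\hat\varphi$ is Hamiltonian and, the assignment $\varphi\mapsto\langle\mu(\cdot),\varphi\rangle$ being linear, $\mu$ is a moment map.

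I expect the only real obstacle to be bookkeeping rather than substance: keeping the $\Rea/\Imm$ interchange and the sign in the integration by parts straight, and interpreting nondegeneracy and closedness of $\Omega$ in the formal, infinite-dimensional sense of \cite{CollinsYau_dHYM_momentmap, Dervan_Zconnections} (on which we rely for (ii) above). As a consistency check one can recover infinitesimal invariance of $\mu$ directly from the displayed formula: taking $\alpha=\mrm{d}\psi$ gives $\langle D\mu_y(\mrm{d}\psi),\varphi\rangle=-n\int_X\mrm{d}\varphi\wedge\mrm{d}\psi\wedge\eta=-n\int_X\mrm{d}(\varphi\,\mrm{d}\psi\wedge\eta)=0$, in agreement with (iv).
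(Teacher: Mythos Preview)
Your proof is correct and follows the standard direct computation: differentiate $\mu$ to get the factor $\Rea\!\big(\mrm{e}^{-\ii\hat\vartheta}(\omega+\ii F(y))^{n-1}\big)$, use closedness of this form to integrate by parts, and match with $\Omega$. The paper does not spell out a proof at all---it simply refers to \cite[Section~2.1]{CollinsYau_dHYM_momentmap}, where exactly this computation is carried out in the line-bundle setting; your write-up supplies the omitted details in the slightly more general context of an arbitrary closed $2$-form $\beta$.
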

The proof is essentially the same as in~\cite[Section~$2.1$]{CollinsYau_dHYM_momentmap}.

\subsection{Extending the moment map}

Let now~$\f{h}\subset\Gamma(X,TX)$ be the Lie algebra of Hamiltonian vector fields on~$X$, with respect to the background symplectic form~$\omega$. We consider the Lie algebra extension
\begin{equation}\label{eq:g_ext}
\begin{tikzcd}
0 \arrow[r] & \f{g} \arrow[r, "\iota"] & \tilde{\f{g}}:=\f{g}\times\f{h} \arrow[r, "p"] & \f{h} \arrow[r] & 0
\end{tikzcd}
\end{equation}
where on~$\tilde{\f{g}}$ we consider the bracket
\begin{equation}\label{eq:bracket}
\Big[(f_1,V_1),(f_2,V_2)\Big]:=\Big(V_1(f_2)-V_2(f_1)-\beta(V_1,V_2),[V_1,V_2]\Big).
\end{equation}
\begin{lemma}\label{lem:bracket}
Equation~\eqref{eq:bracket} defines a Lie bracket on~$\tilde{\f{g}}$.
\end{lemma}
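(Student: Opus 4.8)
The plan is to verify the three axioms of a Lie bracket for \eqref{eq:bracket}. Bilinearity over $\mathbb{R}$ is immediate from the formula, and skew-symmetry follows since $[V_1,V_2]$ is skew and $V_1(f_2)-V_2(f_1)-\beta(V_1,V_2)$ changes sign under the exchange $(f_1,V_1)\leftrightarrow(f_2,V_2)$; one should also note that the bracket takes values in $\tilde{\f{g}}$, because $[V_1,V_2]\in\f{h}$ (Hamiltonian vector fields are closed under the bracket) and $V_1(f_2)-V_2(f_1)-\beta(V_1,V_2)\in\m{C}^\infty(X,\mathbb{R})=\f{g}$. Conceptually, \eqref{eq:bracket} is the bracket of the extension of $\f{h}$ by the $\f{h}$-module $\f{g}$, where $V$ acts on $f$ by $V(f)$, twisted by the $\f{g}$-valued $2$-cochain $c(V_1,V_2):=-\beta(V_1,V_2)$; the Jacobi identity is then equivalent to $c$ being a Lie algebra $2$-cocycle, and this is what I would check by a direct computation.

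For the Jacobi identity, set $a_i=(f_i,V_i)$ and compute the Jacobiator $J:=\sum_{\mathrm{cyc}}[[a_1,a_2],a_3]$ componentwise. The $\f{h}$-component of $J$ is $\sum_{\mathrm{cyc}}[[V_1,V_2],V_3]$, which vanishes by the Jacobi identity in $\f{h}$. For the $\f{g}$-component, expanding \eqref{eq:bracket} twice gives, for the $(1,2,3)$-summand,
\[
[V_1,V_2](f_3)-V_3\bigl(V_1(f_2)-V_2(f_1)-\beta(V_1,V_2)\bigr)-\beta([V_1,V_2],V_3).
\]
Writing $[V_1,V_2](f_3)=V_1V_2(f_3)-V_2V_1(f_3)$ and summing cyclically, all second-order terms $V_iV_j(f_k)$ cancel in pairs; this cancellation is precisely the statement that $V\cdot f=V(f)$ defines a Lie algebra action of vector fields on functions, so it is automatic.

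What then remains of the $\f{g}$-component of $J$ is
\[
\sum_{\mathrm{cyc}} V_1\bigl(\beta(V_2,V_3)\bigr)-\sum_{\mathrm{cyc}}\beta\bigl([V_1,V_2],V_3\bigr),
\]
which is exactly $\mrm{d}\beta(V_1,V_2,V_3)$ by the invariant (Cartan) formula for the exterior derivative of a $2$-form. Since $\beta$ is a fixed \emph{closed} $2$-form — it is the chosen representative of the class $[\beta]$ appearing in $F(y)=\beta+\mrm{d}y$ — we have $\mrm{d}\beta\equiv 0$, so the $\f{g}$-component of $J$ vanishes and the Jacobi identity holds.

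The only real work, and hence the main obstacle, is the bookkeeping in this last computation: keeping track of signs across the cyclic sum so that the terms quadratic in $V$ acting on $f$ disappear, and recognising the residue as $\mrm{d}\beta$ evaluated on $V_1,V_2,V_3$. At the conceptual level this is the familiar fact that the obstruction to \eqref{eq:bracket} being a Lie bracket is the cocycle $\mrm{d}\beta$, which vanishes identically for a closed representative; no smoothness or positivity hypothesis on $\omega$ or $\beta$ beyond closedness is needed.
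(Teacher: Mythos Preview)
Your proof is correct and follows essentially the same approach as the paper: both reduce the Jacobi identity to the first component, expand the cyclic sum, cancel the second-order terms $V_iV_j(f_k)$, and identify the residue with $\mrm{d}\beta(V_1,V_2,V_3)$ via the Cartan formula, which vanishes by closedness of $\beta$. Your additional framing in terms of Lie algebra extensions and the $2$-cocycle condition is a helpful conceptual gloss, but the underlying computation is identical to the paper's (up to the harmless sign difference between summing $[[a_1,a_2],a_3]$ versus $[\zeta_1,[\zeta_2,\zeta_3]]$ cyclically).
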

\begin{proof}
The only nontrivial property to check is the Jacobi identity. Let~$\zeta_i=(f_i,V_i)\in\tilde{\f{g}}$, for~$i=1,2,3$. As the bracket in the second component of~$\tilde{\f{g}}$ is just the bracket on vector fields, it will be enough to prove that
\begin{equation*}
\pi_1\left(\big[\zeta_1,[\zeta_2,\zeta_3]\big]+\big[\zeta_2,[\zeta_3,\zeta_1]\big]+\big[\zeta_3,[\zeta_1,\zeta_2]\big]\right)=0
\end{equation*}
where~$\pi_1:\tilde{\f{g}}\to\f{g}$ is the projection on the first component. The first term is
\begin{equation*}
\pi_1\left(\big[\zeta_1,[\zeta_2,\zeta_3]\big]\right)=
V_1\left(V_2(f_3)\right)-V_1\left(V_3(f_2)\right)-V_1\left(\beta(V_2,V_3)\right)-[V_2,V_3](f_1)-\beta(V_1,[V_2,V_3]).
\end{equation*}
Performing cyclic permutations and using the six-terms formula for the derivative of a~$2$-form, one finds
\begin{equation*}
\pi_1\left(\big[\zeta_1,[\zeta_2,\zeta_3]\big]+\big[\zeta_2,[\zeta_3,\zeta_1]\big]+\big[\zeta_3,[\zeta_1,\zeta_2]\big]\right)=-\mrm{d}\beta(V_1,V_2,V_3)=0
\end{equation*}
as~$\beta$ is closed by assumption.
\end{proof}
For each~$y\in\m{A}$ we define a splitting of~\eqref{eq:g_ext} (as a sequence of vector spaces)~$\vartheta_y:\tilde{\f{g}}\to\f{g}$, by setting~$\vartheta_y(f,V):=f+y(V)$. We define an infinitesimal action of~$\tilde{\f{g}}$ on~$\m{A}$ as
\begin{equation}\label{eq:inf_action_ext}
\hat{\zeta}_y:=\widehat{\vartheta_y(\zeta)}_y+p(\zeta)\lrcorner F(y).
\end{equation}
\begin{lemma}\label{lem:Lie_action}
The correspondence~\eqref{eq:inf_action_ext} defines a Lie algebra action on~$\m{A}$.
\end{lemma}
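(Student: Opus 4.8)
The plan is to put the infinitesimal action \eqref{eq:inf_action_ext} into a closed form in which its $y$-dependence is manifestly affine, and then to reduce the verification of the bracket identity to the computation already carried out in the proof of Lemma~\ref{lem:bracket}. First I would rewrite, for $\zeta=(f,V)\in\tilde{\f{g}}$, using Cartan's formula $\mathcal{L}_Vy=\mrm{d}(y(V))+\iota_V\mrm{d}y$ for the $1$-form $y$,
\[
\hat{\zeta}_y=\mrm{d}\bigl(f+y(V)\bigr)+V\lrcorner\bigl(\beta+\mrm{d}y\bigr)=\mrm{d}f+\iota_V\beta+\mathcal{L}_Vy .
\]
The point of this formula is that on $\m{A}=\m{A}^1(X,\bb{R})$ the summand $\mrm{d}f+\iota_V\beta$ is constant while $\mathcal{L}_Vy$ is \emph{linear} in $y$; hence the differential of the fundamental vector field $\hat{\zeta}_2$ at a point $y$ in a direction $\alpha\in T_y\m{A}=\m{A}^1(X,\bb{R})$ is simply $\alpha\mapsto\mathcal{L}_{V_2}\alpha$, so $[\hat{\zeta}_1,\hat{\zeta}_2]_y=\mathcal{L}_{V_2}(\hat{\zeta}_1)_y-\mathcal{L}_{V_1}(\hat{\zeta}_2)_y$.

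Next I would expand this, using that $\mathcal{L}_V$ commutes with $\mrm{d}$ and that $\mathcal{L}_{V_2}\mathcal{L}_{V_1}-\mathcal{L}_{V_1}\mathcal{L}_{V_2}=-\mathcal{L}_{[V_1,V_2]}$. The terms involving the functions $f_i$ and the $1$-form $y$ collect into $\mrm{d}\bigl(V_2(f_1)-V_1(f_2)\bigr)-\mathcal{L}_{[V_1,V_2]}y$, leaving the purely $\beta$-dependent contribution $\mathcal{L}_{V_2}\iota_{V_1}\beta-\mathcal{L}_{V_1}\iota_{V_2}\beta$. The key identity I would establish is
\[
\mathcal{L}_{V_2}\iota_{V_1}\beta-\mathcal{L}_{V_1}\iota_{V_2}\beta=\mrm{d}\bigl(\beta(V_1,V_2)\bigr)-\iota_{[V_1,V_2]}\beta ,
\]
which follows from Cartan's formula, the commutation rule $[\mathcal{L}_V,\iota_W]=\iota_{[V,W]}$, and the closedness $\mrm{d}\beta=0$ — equivalently, it is the six-term expansion of $\mrm{d}\beta(V_1,V_2,\cdot\,)$ already exploited in the proof of Lemma~\ref{lem:bracket}.

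Putting these together gives $[\hat{\zeta}_1,\hat{\zeta}_2]_y=\mrm{d}\bigl(V_2(f_1)-V_1(f_2)+\beta(V_1,V_2)\bigr)-\iota_{[V_1,V_2]}\beta-\mathcal{L}_{[V_1,V_2]}y$, which, comparing with the bracket \eqref{eq:bracket} and the closed form above, is exactly $-\widehat{[\zeta_1,\zeta_2]}_y$. Thus $\zeta\mapsto\hat{\zeta}$ intertwines \eqref{eq:bracket} with minus the commutator of vector fields on $\m{A}$ — the standard sign of the fundamental vector field construction for a left action — so \eqref{eq:inf_action_ext} indeed defines a Lie algebra action of $\tilde{\f{g}}$ on $\m{A}$.

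I do not expect a genuine obstacle: everything becomes routine once the action is written as $\mrm{d}f+\iota_V\beta+\mathcal{L}_Vy$. The one step that needs care is the $\beta$-bookkeeping, and — as flagged — it is forced to close up precisely because $\beta$ is closed, in complete parallel with the Jacobi-identity check of Lemma~\ref{lem:bracket}; keeping track of the (standard) anti-homomorphism sign is the only other thing to be mindful of. A minor caveat is that all of this takes place at the formal level, $\m{A}$ being an infinite-dimensional vector space, so ``differential of a vector field'' is to be read as the evident directional derivative.
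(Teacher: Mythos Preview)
Your argument is correct and is essentially the same as the paper's: both rewrite $\hat{\zeta}_y=\mrm{d}f+\iota_V\beta+\mathcal{L}_Vy$, compute the Lie bracket of the fundamental vector fields via the affine/linear structure, and reduce the $\beta$-terms using the identity $\mathcal{L}_{V_2}\iota_{V_1}\beta-\mathcal{L}_{V_1}\iota_{V_2}\beta=\mrm{d}\bigl(\beta(V_1,V_2)\bigr)-\iota_{[V_1,V_2]}\beta$, which is where closedness of $\beta$ enters. The only difference is presentational: you emphasise the affine $y$-dependence explicitly before computing, whereas the paper jumps straight to the commutator computation.
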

\begin{proof}
We need to check that for~$\zeta_1,\zeta_2\in\tilde{\f{g}}$ and~$y\in\m{A}$ we have~$\widehat{[\zeta_1,\zeta_2]}=-[\hat{\zeta_1},\hat{\zeta_2}]$. Letting~$\zeta_1=(f_1,V_1)$ and~$\zeta_2=(f_2,V_2)$, we find
\begin{equation}\label{eq:action_commutator}
\widehat{[\zeta_1,\zeta_2]}_y=\mrm{d}\big(V_1(f_2)-V_2(f_1)-\beta(V_1,V_2)\big)+[V_1,V_2]\lrcorner\beta+\m{L}_{[V_1,V_2]}y.
\end{equation}
On the other hand, for~$i=1,2$
\begin{equation*}
\hat{\zeta_i}_y=\mrm{d}f_i+\mrm{d}(y(V_i))+V_i\lrcorner(\beta+\mrm{d}y)=\mrm{d}f_i+V_i\lrcorner\beta+\m{L}_{V_i}y.
\end{equation*}
So we can compute the commutator as
\begin{equation}\label{eq:comm_part1}
\left[\hat{\zeta_1},\hat{\zeta_2}\right]_y=\mrm{d}\left(V_2(f_1)-V_1(f_2)\right)+\m{L}_{[V_2,V_1]}y+\m{L}_{V_2}\left(V_1\lrcorner\beta\right)-\m{L}_{V_1}\left(V_2\lrcorner\beta\right).
\end{equation}
Using Cartan's formula, as the Lie derivative commutes with contraction we obtain
\begin{equation}\label{eq:lie_contraction}
\begin{split}
\m{L}_{V_2}\left(V_1\lrcorner\beta\right)-\m{L}_{V_1}\left(V_2\lrcorner\beta\right)=&V_2\lrcorner\mrm{d}\left(V_1\lrcorner\beta\right)+\mrm{d}\left(V_2\lrcorner V_1\lrcorner\beta\right)-\m{L}_{V_1}\left(V_2\lrcorner\beta\right)=\\
=&\mrm{d}\left(\beta(V_1,V_2)\right)-[V_1,V_2]\lrcorner\beta.
\end{split}
\end{equation}
Putting~\eqref{eq:comm_part1} and~\eqref{eq:lie_contraction} together, we obtain precisely the opposite of~\eqref{eq:action_commutator}.
\end{proof}

The general results of~\cite{AlvarezGarciaGarcia_KYM} give a sufficient condition for the action defined by~\eqref{eq:inf_action_ext} to be Hamiltonian, with respect to the symplectic form~$\Omega$. To state the condition, for any~$y\in\m{A}$ define~$\vartheta_y^\perp:\f{h}\to\tilde{\f{g}}$ by the equation
\begin{equation*}
\bm{1}_{\tilde{\f{g}}}=\iota\circ\vartheta_y+\vartheta_y^\perp\circ p.
\end{equation*}
\begin{prop}[cf.\cite{AlvarezGarciaGarcia_KYM}, Proposition 1.3]\label{thm:GarciaFernandez}
The action of~$\tilde{\f{g}}$ on~$\m{A}$ is Hamiltonian if and only if there is a map~$\sigma:\m{A}\to\f{h}^*$ such that for all~$V\in\f{h}$
\begin{equation}\label{eq:cond_sigma}
\widehat{\,\vartheta^\perp V\,}\!\lrcorner\Omega=\langle\mu,\left(\mrm{d}\vartheta\right)(V)\rangle+\mrm{d}\langle\sigma,V\rangle.
\end{equation}
In this case, a moment map is given by~$\langle\tilde{\mu},\Psi\rangle=\langle\mu,\vartheta(\Psi)\rangle+\langle\sigma,p(\Psi)\rangle.$
\end{prop}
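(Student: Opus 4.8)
The plan is to follow the abstract framework of \cite[Section~1]{AlvarezGarciaGarcia_KYM}, adapted to the present infinitesimal setting: one must check that the candidate $\tilde{\mu}$ satisfies the moment map equation $\hat{\Psi}\lrcorner\Omega=\mrm{d}\langle\tilde{\mu},\Psi\rangle$ for all $\Psi\in\tilde{\f{g}}$, together with equivariance. The book-keeping device is the family of vector-space splittings $\vartheta_y$ of \eqref{eq:g_ext}: explicitly $\vartheta_y(f,V)=f+y(V)$, hence $\vartheta_y^\perp(V)=(-y(V),V)$, so that by \eqref{eq:inf_action_ext} the fundamental vector field of $\vartheta_y^\perp(V)$ at the point $y$ is exactly $V\lrcorner F(y)$, while $\f{g}$ acts through $y$-independent elements with the moment map $\mu$ recalled above. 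Since $\vartheta_y$ varies with $y$, differentiating $y\mapsto\vartheta_y$ in a direction $\alpha\in T_y\m{A}$ produces, after evaluation on $\vartheta_y^\perp(V)$, the $\f{g}$-valued $1$-form $\alpha\mapsto\alpha(V)$; this is the term $(\mrm{d}\vartheta)(V)$ in \eqref{eq:cond_sigma}, and it is the only genuinely new feature compared with the standard abelian picture.

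For the ``if'' direction, suppose $\sigma$ is given and set $\langle\tilde{\mu}(y),\Psi\rangle=\langle\mu(y),\vartheta_y(\Psi)\rangle+\langle\sigma(y),p(\Psi)\rangle$. Fixing $\Psi=(f,V)$ and differentiating in a direction $\alpha$, the first summand $\langle\mu(y),f+y(V)\rangle$ contributes, by the moment map property of $\mu$, the term $\Omega_y\big(\mrm{d}(f+y(V)),\alpha\big)$ plus the splitting variation $\langle\mu,(\mrm{d}\vartheta)(V)\rangle[\alpha]$; the second summand contributes $\mrm{d}\langle\sigma,V\rangle[\alpha]$, which by \eqref{eq:cond_sigma} equals $\Omega_y\big(V\lrcorner F(y),\alpha\big)-\langle\mu,(\mrm{d}\vartheta)(V)\rangle[\alpha]$. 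The two copies of $\langle\mu,(\mrm{d}\vartheta)(V)\rangle$ cancel, leaving $\Omega_y\big(\mrm{d}(f+y(V))+V\lrcorner F(y),\alpha\big)=\Omega_y(\hat{\Psi}_y,\alpha)$, which is the moment map equation. For the ``only if'' direction one runs this in reverse: given a moment map $\tilde{\mu}$ normalised to restrict to $\mu$ on $\f{g}$, put $\langle\sigma(y),V\rangle:=\langle\tilde{\mu}(y),\vartheta_y^\perp(V)\rangle$ and differentiate; the variation splits as $\langle\mrm{d}\tilde{\mu}[\alpha],\vartheta_y^\perp(V)\rangle$ — which the moment map equation identifies with $\Omega_y(V\lrcorner F(y),\alpha)$ — minus the correction $\langle\tilde{\mu},\iota(\alpha(V))\rangle=\langle\mu,(\mrm{d}\vartheta)(V)\rangle[\alpha]$ arising from the $y$-dependence of $\vartheta^\perp$, which is precisely \eqref{eq:cond_sigma}.

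It remains to verify equivariance of the $\tilde{\mu}$ produced in the ``if'' direction. Using the moment map equation just obtained, this reduces to the identity $\Omega_y(\hat{\Psi}_1,\hat{\Psi}_2)=\langle\tilde{\mu},[\Psi_1,\Psi_2]\rangle$ (up to the usual sign), which is checked by expanding both sides with $\Psi_i=(f_i,V_i)$: equivariance of $\mu$ takes care of the part valued in $\f{g}$, the bracket \eqref{eq:bracket} supplies the $-\beta(V_1,V_2)$ and $[V_1,V_2]$ contributions, and the Cartan-calculus identities already used for Lemma~\ref{lem:Lie_action}, in particular \eqref{eq:lie_contraction}, together with $\mrm{d}\beta=0$ and closedness of $\Omega$, reconcile the remaining contractions of $\beta$ and $y$ against $V_1,V_2$.

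I expect the main obstacle to be exactly the disciplined handling of the $y$-dependence of $\vartheta_y,\vartheta_y^\perp$: when differentiating expressions such as $\langle\mu(y),\vartheta_y(\Psi)\rangle$ or $\langle\tilde{\mu}(y),\vartheta_y^\perp(V)\rangle$ along $\alpha\in T_y\m{A}$, one must carefully separate the variation of the moment map (governed by the $\f{g}$-action) from the variation of the splitting (the term $(\mrm{d}\vartheta)(V)$), and then see that these extra terms are precisely what \eqref{eq:cond_sigma} is built to absorb. The remaining points — that the moment map for $\tilde{\f{g}}$ may be normalised to restrict to $\mu$ on $\f{g}$, the ambiguity being a harmless additive constant, and that $\Omega$ may be treated throughout merely as a closed (possibly degenerate) $2$-form so that no Poisson bracket is actually invoked — are routine once this splitting calculus is in place.
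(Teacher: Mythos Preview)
The paper does not actually prove this proposition: it is quoted from \cite[Proposition~1.3]{AlvarezGarciaGarcia_KYM} and used as a black box. Your argument is essentially a reconstruction of that reference's proof, and the core computation is correct. In particular, your identification $\vartheta_y^\perp(V)=(-y(V),V)$, the observation that its fundamental vector field at $y$ is $V\lrcorner F(y)$, and the splitting of $\mrm{d}\langle\mu(y),\vartheta_y(\Psi)\rangle$ into the $\f{g}$-moment-map contribution plus the variation-of-splitting term $\langle\mu,(\mrm{d}\vartheta)(V)\rangle$ are exactly right; the cancellation you describe then yields the moment map equation for $\tilde{\mu}$, and the converse is obtained by the same bookkeeping run backwards.

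One remark on scope: you include equivariance as part of what must be proved here, but in the paper this is \emph{not} part of the abstract Proposition~\ref{thm:GarciaFernandez}. Equivariance is instead established separately, and only for the specific $\sigma$ constructed in the following proposition, by a direct (and fairly lengthy) computation with the forms $\Imm(\mrm{e}^{-\ii\hat{\vartheta}}(\omega+\ii F(y))^n)$ and $\Rea(\mrm{e}^{-\ii\hat{\vartheta}}(\omega+\ii F(y))^n)$. Your sketch of an abstract equivariance argument (``equivariance of $\mu$ takes care of the $\f{g}$-part, Cartan identities reconcile the rest'') is in the right spirit but is not a proof as written: in general the moment map equation alone determines $\tilde{\mu}$ only up to a Lie algebra $2$-cocycle, and one must check that this cocycle vanishes. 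So either drop the equivariance paragraph here (matching the paper's division of labour), or flesh it out into an honest verification.
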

We can use this result to produce a moment map for the action of~$\tilde{\f{g}}$.
\begin{prop}
In our situation, the map~$\sigma$ defined by
\begin{equation*}
\langle\sigma(y),X^\omega_\varphi\rangle:=\int_X\varphi\,\Rea\left(\mrm{e}^{-\ii\hat{\vartheta}}\left(\omega+\ii F(y)\right)^{n}\right)
\end{equation*}
satisfies~\eqref{eq:cond_sigma}. The corresponding moment map is~$\tilde{\f{g}}$-equivariant, i.e. for any~$\zeta_1,\zeta_2\in\tilde{\f{g}}$
\begin{equation}\label{equivariance}
\left\langle\tilde{\mu},[\zeta_1,\zeta_2]\right\rangle=\Omega(\hat{\zeta}_1,\hat{\zeta}_2).
\end{equation}
\end{prop}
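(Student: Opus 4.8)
The plan is to prove the two assertions in turn, each by a direct computation in the style of \cite{CollinsYau_dHYM_momentmap, AlvarezGarciaGarcia_KYM}. First I would make the splitting and the induced action fully explicit: from $\bm{1}_{\tilde{\f{g}}}=\iota\circ\vartheta_y+\vartheta_y^\perp\circ p$ and $\vartheta_y(f,V)=f+y(V)$ one reads off $\vartheta_y^\perp(V)=(-y(V),V)$, so by \eqref{eq:inf_action_ext} the vector field on $\m{A}$ it generates is $\widehat{\vartheta^\perp V}_y=V\lrcorner F(y)$, a $1$-form on $X$ viewed as a tangent vector at $y$. Hence the left-hand side of \eqref{eq:cond_sigma}, evaluated on $\alpha\in T_y\m{A}$, is $-n\int_X(V\lrcorner F(y))\wedge\alpha\wedge\Rea(\mrm{e}^{-\ii\hat{\vartheta}}(\omega+\ii F(y))^{n-1})$; on the right-hand side, $\langle\mu,(\mrm{d}\vartheta)(V)\rangle$ pairs $\mu(y)$ with the function $\iota_V\alpha$, while differentiating $\langle\sigma(y),X^\omega_\varphi\rangle$ in the direction $\alpha$ (so that $F(y)$ varies by $\mrm{d}\alpha$) gives $-n\int_X\varphi\,\mrm{d}\alpha\wedge\Imm(\mrm{e}^{-\ii\hat{\vartheta}}(\omega+\ii F(y))^{n-1})$.

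The identity that makes these three terms match is obtained by applying the contraction $\iota_V$, a derivation, to $\mu(y)=\Imm(\mrm{e}^{-\ii\hat{\vartheta}}(\omega+\ii F(y))^n)$: from $\iota_V(\omega+\ii F(y))^n=n\,(\iota_V\omega+\ii\,\iota_V F(y))\wedge(\omega+\ii F(y))^{n-1}$, separating real and imaginary parts yields
\begin{equation*}
\iota_V\mu(y)=n\,\iota_V\omega\wedge\Imm\!\big(\mrm{e}^{-\ii\hat{\vartheta}}(\omega+\ii F(y))^{n-1}\big)+n\,\iota_V F(y)\wedge\Rea\!\big(\mrm{e}^{-\ii\hat{\vartheta}}(\omega+\ii F(y))^{n-1}\big),
\end{equation*}
together with the analogous formula for $\iota_V\,\Rea(\mrm{e}^{-\ii\hat{\vartheta}}(\omega+\ii F(y))^n)$. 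Feeding this into the right-hand side of \eqref{eq:cond_sigma} and integrating by parts twice — legitimate because $\omega$ and $\beta$ are closed, so $\mrm{d}F(y)=0$ and the two $(n-1)$-forms above are closed — together with the relation $\iota_{X^\omega_\varphi}\omega=\mrm{d}\varphi$, the $\iota_V\omega$-contributions coming from the two pieces cancel, and what remains is exactly $n\int_X\alpha\wedge\iota_V F(y)\wedge\Rea(\mrm{e}^{-\ii\hat{\vartheta}}(\omega+\ii F(y))^{n-1})$, i.e.\ the left-hand side. This proves \eqref{eq:cond_sigma}, and Proposition~\ref{thm:GarciaFernandez} then yields that $\tilde\mu$ is a moment map, so $\hat\zeta\lrcorner\Omega=\mrm{d}\langle\tilde\mu,\zeta\rangle$ for every $\zeta\in\tilde{\f{g}}$.

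For equivariance the formal part is short: from the moment map identity, Lemma~\ref{lem:Lie_action} (which gives $\widehat{[\zeta_1,\zeta_2]}=-[\hat\zeta_1,\hat\zeta_2]$), $\mrm{d}\Omega=0$, and Cartan's formula one gets $\mrm{d}\langle\tilde\mu,[\zeta_1,\zeta_2]\rangle=\widehat{[\zeta_1,\zeta_2]}\lrcorner\Omega=\mrm{d}\big(\Omega(\hat\zeta_1,\hat\zeta_2)\big)$, so $\langle\tilde\mu,[\zeta_1,\zeta_2]\rangle$ and $\Omega(\hat\zeta_1,\hat\zeta_2)$ agree up to a constant on the affine space $\m{A}$; to see this constant vanishes I would compute both sides directly. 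When one $\zeta_i$ lies in $\iota(\f{g})$ the bracket \eqref{eq:bracket} returns to $\f{g}$ and the equality follows from the contraction identity above with a single integration by parts. The genuinely laborious case is $\zeta_i=(0,V_i)$ with $V_1,V_2\neq 0$, where $[\zeta_1,\zeta_2]=(-\beta(V_1,V_2),[V_1,V_2])$ and $(\hat\zeta_i)_y=\m{L}_{V_i}y+\iota_{V_i}\beta$: there one must expand $\Omega_y(\hat\zeta_1,\hat\zeta_2)=-n\int_X(\hat\zeta_1)_y\wedge(\hat\zeta_2)_y\wedge\Rea(\mrm{e}^{-\ii\hat{\vartheta}}(\omega+\ii F(y))^{n-1})$ into its four terms, integrate by parts repeatedly, use $[X^\omega_{\varphi_1},X^\omega_{\varphi_2}]=X^\omega_{\{\varphi_1,\varphi_2\}}$, and invoke $\mrm{d}\beta=0$ through the same six-term formula used in Lemma~\ref{lem:bracket} in order to absorb the $\beta(V_1,V_2)$-term — which is precisely the curvature of the splitting $\vartheta$ — and match the result with $\langle\mu,\vartheta_y([\zeta_1,\zeta_2])\rangle+\langle\sigma(y),[V_1,V_2]\rangle$. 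I expect this last bookkeeping, keeping track of the non-vanishing Lie derivatives $\m{L}_{V_i}F(y)$, $\m{L}_{V_i}\beta$ and of the $\beta$-correction in the bracket, to be the main obstacle; everything else is formal.
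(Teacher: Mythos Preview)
Your proposal is correct. For the first assertion your sketch is exactly the computation the paper defers to~\cite{SchlitzerStoppa}: the contraction identity you write for $\iota_V\mu(y)$, combined with the vanishing of the $(2n{+}1)$-form $\alpha\wedge\Imm(\mrm{e}^{-\ii\hat\vartheta}(\omega+\ii F(y))^n)$, is precisely what makes the $\iota_V\omega$-terms cancel.

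For equivariance you take a slightly different route. You first run the standard differential argument (Cartan's formula plus $\mrm{d}\Omega=0$ and Lemma~\ref{lem:Lie_action}) to show $\langle\tilde\mu,[\zeta_1,\zeta_2]\rangle-\Omega(\hat\zeta_1,\hat\zeta_2)$ is constant on $\m{A}$, then verify the constant by bilinearity and a case split. The paper does not do this reduction: it computes both sides for general $\zeta_i=(f_i,V_i)$ at general $y$ and matches them directly, by repeatedly contracting \emph{vanishing} $(2n{+}1)$-forms with $V_j$---for instance $\mrm{d}(f_1+y(V_1))\wedge\Imm(\mrm{e}^{-\ii\hat\vartheta}(\omega+\ii F(y))^n)=0$ contracted with $V_2$, then $(V_1\lrcorner F(y))\wedge\Imm(\cdots)^n=0$ with $V_2$, and $(V_2\lrcorner\omega)\wedge\Rea(\cdots)^n=0$ with $V_1$. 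This is the same device you use in part~1, and it organises the bookkeeping more cleanly than raw integration by parts: each contraction turns a wedge of $1$-forms in $\Omega$ into a function times a top-form, matching the shape of $\langle\tilde\mu,[\zeta_1,\zeta_2]\rangle$. Your formal reduction buys you the option of evaluating at $y=0$, which eliminates the $\m{L}_{V_i}y$ cross-terms in your ``laborious case'' and would genuinely shorten it; since you do not exploit this, the two arguments end up doing comparable work. One small correction: the six-term formula for $\mrm{d}\beta$ is not what is needed here---$\mrm{d}\beta=0$ enters only through the closedness of $\Rea/\Imm(\mrm{e}^{-\ii\hat\vartheta}(\omega+\ii F(y))^{n-1})$, while the passage from $-\beta(V_1,V_2)+y([V_1,V_2])$ to $-F(y)(V_1,V_2)$ uses Cartan's formula for $\mrm{d}y$.
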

Checking that $\sigma$ satisfies \eqref{eq:cond_sigma} is completely analogous to the proof of Theorem $2$ in~\cite{SchlitzerStoppa}. It is necessary instead to check the equivariance claim.
\begin{proof} The expression on the left hand side of~\eqref{equivariance} is 
\begin{equation*}
\left\langle\tilde{\mu},[\zeta_1,\zeta_2]\right\rangle=\langle\mu,\vartheta([\zeta_1,\zeta_2])\rangle+\langle\sigma,p([\zeta_1,\zeta_2])\rangle.
\end{equation*}
Writing~$\zeta_i=(f_i,V_i)$, for~$y\in\m{A}$ we have
\begin{align*}
&\vartheta([\zeta_1,\zeta_2])=V_1(f_2)-V_2(f_1)-\beta(V_1,V_2)+y([V_1,V_2]),\\
& p([\zeta_1,\zeta_2])=[V_1,V_2].
\end{align*}
Recall that the Hamiltonian potential of~$[V_1,V_2]$ is~$\omega(V_2,V_1)$. Then,
\begin{align}\label{eq:equiv_mu}
& \nonumber\left\langle\mu(y),\vartheta([\zeta_1,\zeta_2])\right\rangle=\int\left(V_1(f_2)-V_2(f_1)-\beta(V_1,V_2)+y([V_1,V_2])\right)\Imm\left(\mrm{e}^{-\ii\hat{\vartheta}}(\omega+\ii\,F(y))^n\right),\\
& \langle\sigma,p([\zeta_1,\zeta_2])\rangle=\int \omega(V_2,V_1)\Rea\left(\mrm{e}^{-\ii\hat{\vartheta}}(\omega+\ii\,F(y))^n\right).
\end{align}
For~$\Omega(\hat{\zeta}_1,\hat{\zeta}_2)$ instead we have, since~$\Rea\left(\mrm{e}^{-\ii\hat{\vartheta}}(\omega+\ii\,F(y))^{n-1}\right)$ is closed,
\begin{equation}\label{eq:equiv_Omega}
\begin{split}
\Omega(\hat{\zeta}_1,\hat{\zeta}_2)=&-n\int\mrm{d}\left(f_1+y(V_1)\right)\wedge\left(V_2\lrcorner F(y)\right)\wedge\Rea\left(\mrm{e}^{-\ii\hat{\vartheta}}(\omega+\ii\,F(y))^{n-1}\right)\\
&-n\int\left(V_1\lrcorner F(y)\right)\wedge\mrm{d}\left(f_2+y(V_2)\right)\wedge\Rea\left(\mrm{e}^{-\ii\hat{\vartheta}}(\omega+\ii\,F(y))^{n-1}\right)\\
&-n\int\left(V_1\lrcorner F(y)\right)\wedge\left(V_2\lrcorner F(y)\right)
\wedge\Rea\left(\mrm{e}^{-\ii\hat{\vartheta}}(\omega+\ii\,F(y))^{n-1}\right).
\end{split}
\end{equation}
Now,~$\mrm{d}\left(f_1+y(V_1)\right)\wedge\Imm\left(\mrm{e}^{-\ii\hat{\vartheta}}(\omega+\ii\,F(y))^{n}\right)=0.$ Contracting with~$V_2$ and integrating we obtain
\begin{equation*}
\begin{split}
-\int V_2\left(f_1+y(V_1)\right)&\,\Imm\left(\mrm{e}^{-\ii\hat{\vartheta}}(\omega+\ii\,F(y))^{n}\right)=\\
&=-n\int\mrm{d}\left(f_1+y(V_1)\right)\wedge(V_2\lrcorner F(y))\wedge\Rea\left(\mrm{e}^{-\ii\hat{\vartheta}}(\omega+\ii\,F(y))^{n-1}\right).
\end{split}
\end{equation*}
So that~\eqref{eq:equiv_Omega} can be rewritten as
\begin{equation*}
\begin{split}
\Omega(\hat{\zeta}_1,\hat{\zeta}_2)=&
\int\big(V_1(f_2)-V_2(f_1)+V_1(y(V_2))-V_2(y(V_1))\big)\Imm\left(\mrm{e}^{-\ii\hat{\vartheta}}(\omega+\ii\,F(y))^{n}\right)\\
&-n\int\left(V_1\lrcorner F(y)\right)\wedge\left(V_2\lrcorner F(y)\right)
\wedge\Re\left(\mrm{e}^{-\ii\hat{\vartheta}}(\omega+\ii\,F(y))^{n-1}\right).
\end{split}
\end{equation*}
Compare this with~\eqref{eq:equiv_mu} to obtain
\begin{equation*}
\begin{split}
\left\langle\mu(y),\vartheta([\zeta_1,\zeta_2])\right\rangle-\Omega(\hat{\zeta}_1,\hat{\zeta}_2)=&-\int F(y)(V_1,V_2)\,\Imm\left(\mrm{e}^{-\ii\hat{\vartheta}}(\omega+\ii\,F(y))^n\right)+\\
+n\int(&V_1\lrcorner F(y))\wedge\left(V_2\lrcorner F(y)\right)
\wedge\Rea\left(\mrm{e}^{-\ii\hat{\vartheta}}(\omega+\ii\,F(y))^{n-1}\right).
\end{split}
\end{equation*}
Contracting with~$V_2$ the form~$(V_1\lrcorner F(y))\wedge\Imm\left(\mrm{e}^{-\ii\hat{\vartheta}}(\omega+\ii\,F(y))^n\right)=0$, we find
\begin{equation*}
\begin{gathered}
F(y)(V_1,V_2)\Imm\left(\mrm{e}^{-\ii\hat{\vartheta}}(\omega+\ii\,F(y))^n\right)-n(V_1\lrcorner F(y))\wedge(V_2\lrcorner\omega)\wedge\Imm\left(\mrm{e}^{-\ii\hat{\vartheta}}(\omega+\ii\,F(y))^{n-1}\right)\\
-n(V_1\lrcorner F(y))\wedge(V_2\lrcorner F(y))\wedge\Rea\left(\mrm{e}^{-\ii\hat{\vartheta}}(\omega+\ii\,F(y))^{n-1}\right)=0.
\end{gathered}
\end{equation*}
Similarly, we have
\begin{equation*}
\begin{gathered}
\omega(V_2,V_1)\,\Rea\left(\mrm{e}^{-\ii\hat{\vartheta}}(\omega+\ii\,F(y))^n\right)-n(V_2\lrcorner\omega)\wedge(V_1\lrcorner\omega)\wedge\Re\left(\mrm{e}^{-\ii\hat{\vartheta}}(\omega+\ii\,F(y))^{n-1}\right)\\
+n(V_2\lrcorner\omega)\wedge(V_1\lrcorner F(y))\wedge\Imm\left(\mrm{e}^{-\ii\hat{\vartheta}}(\omega+\ii\,F(y))^{n-1}\right)=0
\end{gathered}
\end{equation*}
from which we finally obtain
\begin{equation*}
\begin{split}
\left\langle\mu(y),\vartheta([\zeta_1,\zeta_2])\right\rangle-\Omega(\hat{\zeta}_1,\hat{\zeta}_2)=&-\int\omega(V_2,V_1)\,\Rea\left(\mrm{e}^{-\ii\hat{\vartheta}}(\omega+\ii\,F(y))^n\right)
\end{split}
\end{equation*}
which is the same as~$\left\langle\mu(y),\vartheta([\zeta_1,\zeta_2])\right\rangle-\Omega(\hat{\zeta}_1,\hat{\zeta}_2)=-\langle\sigma,p([\zeta_1,\zeta_2])\rangle$.
\end{proof}
We can couple~$\tilde{\mu}$ to a second Hamiltonian action, as in~\cite{SchlitzerStoppa}. The algebra~$\f{h}$ acts in a Hamiltonian fashion on the space~$\m{J}$ of complex structures compatible with~$\omega$, so we can consider the induced diagonal action of~$\tilde{\f{g}}$ on~$\m{A}\times\m{J}$, obtaining a moment map~$\bm{\mu}:\m{A}\times\m{J}\to\tilde{\f{g}}^*$. Corresponding to the decomposition of~$\tilde{\f{g}}$ as a sum of~$\f{g}$ and~$\f{h}$, the equation~$\bm{\mu}(y,J)=0$ decomposes as
\begin{equation}\label{eq:dHYM_scal_momentmap}
\begin{dcases}
\Imm\left(\mrm{e}^{-\ii\hat{\vartheta}}\left(\omega+\ii(\beta+\mrm{d}y)\right)^n\right)=0\\
s(\omega,J)\,\omega^n-\lambda\,\Rea\left(\mrm{e}^{-\ii\hat{\vartheta}}\left(\omega+\ii(\beta+\mrm{d}y)\right)^n\right)=c\,\omega^n
\end{dcases}
\end{equation}
where~$\lambda$ is an arbitrary coupling constant, coming from the definition of symplectic form on~$\m{A}\times\m{J}$. The variables in~\eqref{eq:dHYM_scal_momentmap} are the complex structure~$J$ and the~$1$-form~$y$.

\subsubsection{Integrability}

We have shown that it is possible to interpret the system~\eqref{dKYM} as a moment map equation for the (infinitesimal) action of the Lie algebra~$\tilde{\f{g}}$ on~$\m{A}$. This is quite different from the theory of symplectic reductions, where one usually considers actions of Lie groups. However, as the bracket of~$\tilde{\f{g}}$ is ``twisted'' by~$\beta$, it is not clear how~\eqref{eq:inf_action_ext} could be realized as the action of a group of diffeomorphisms of~$X$.

Still, it is possible to define what the orbits of the action of this group \emph{should} be, as the distribution~$\m{D}\subset T\m{A}$ defined by~$
\m{D}_{y}:=\big\{\hat{\psi}_y\,\big\vert\,\psi\in\tilde{\f{g}}\big\}$ is integrable. We have already seen that~$\m{D}$ is \emph{formally} integrable in Lemma~\ref{lem:Lie_action}, but as~$\m{A}$ is an infinite-dimensional manifold this might not be sufficient to conclude that there is a foliation of~$\m{A}$ integrating~$\m{D}$. We can however exhibit an explicit parametrization for the leaves of~$\m{D}$; these leaves then play the role of the orbits of the action.
\begin{lemma}\label{lemma:integrabile}
The distribution~$\m{D}$ is integrable.
\end{lemma}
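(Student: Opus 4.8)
The plan is to exhibit the leaves of $\m{D}$ explicitly as the orbits of the infinitesimal action, i.e. as reachable sets: although the $\beta$-twisted bracket \eqref{eq:bracket} need not integrate to a group of diffeomorphisms, the subalgebra $\f{h}$ of Hamiltonian vector fields does integrate, on the compact manifold $X$, to the group $\operatorname{Ham}(X,\omega)$, and this is enough to integrate $\m{D}$ by hand.

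First I would record the distribution in a workable form. Expanding \eqref{eq:inf_action_ext}, with $\vartheta_y(f,V)=f+y(V)$ and $\widehat{\varphi}_y=\mrm{d}\varphi$, one gets $\hat\zeta_y=\mrm{d}\big(f+y(V)\big)+V\lrcorner F(y)$ for $\zeta=(f,V)\in\tilde{\f{g}}$, hence
\[
\m{D}_y=\mrm{d}\,\m{C}^\infty(X,\bb{R})+\big\{\,V\lrcorner F(y)\ :\ V\in\f{h}\,\big\}.
\]
Thus $\m{D}$ contains the trivially integrable distribution of exact $1$-forms, and (since $F(y)$ is closed, so $\mrm{d}(V\lrcorner F(y))=\m{L}_V F(y)$) its image under $F_*$ is the tangent space at $F(y)$ to the $\operatorname{Ham}(X,\omega)$-orbit inside the affine space $\m{Z}_\beta$ of closed real $2$-forms cohomologous to $\beta$, on which $\operatorname{Ham}(X,\omega)$ acts by pullback.

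Next, for $y_0\in\m{A}$, I would define the candidate leaf $\m{S}_{y_0}$ as the set of endpoints $y_1$ of solutions of the non-autonomous equation
\[
\dot y_t=\mrm{d}\big(h_t+y_t(V_t)\big)+V_t\lrcorner F(y_t),\qquad y_t\big|_{t=0}=y_0,
\]
as $\{V_t\}\subset\f{h}$ and $\{h_t\}\subset\m{C}^\infty(X,\bb{R})$ range over smooth paths. This is globally solvable: applying $\mrm{d}$ gives $\dot F(y_t)=\m{L}_{V_t}F(y_t)$, so $F(y_t)=g_t^{*}F(y_0)$ for a Hamiltonian isotopy $g_t$ built from the $V_t$ (which exists for all $t$ because $X$ is compact), after which $y_t$ is recovered by integrating the resulting affine equation,
\[
y_t=y_0+\mrm{d}\Big(\int_0^t\big(h_s+y_s(V_s)\big)\,\mrm{d}s\Big)+\int_0^t V_s\lrcorner g_s^{*}F(y_0)\,\mrm{d}s.
\]
Hence $\m{S}_{y_0}$ is the image of an explicit smooth map from (smooth paths in $\operatorname{Ham}(X,\omega)$ based at the identity) $\times$ (smooth paths in $\m{C}^\infty(X,\bb{R})$); and since every admissible path satisfies $\dot y_t\in\m{D}_{y_t}$ by construction, while conversely the initial velocities $\dot y_0$ sweep out all of $\m{D}_{y_0}$, its tangent space at $y_0$ is exactly $\mrm{d}\,\m{C}^\infty(X,\bb{R})+\{V\lrcorner F(y_0):V\in\f{h}\}=\m{D}_{y_0}$.

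Finally I would check that the $\m{S}_{y_0}$ are the leaves of $\m{D}$. Reachability along such paths is an equivalence relation — admissible paths can be reversed, since $-V_t\in\f{h}$ and the $\mrm{d}h_t$-term is unconstrained, and they can be concatenated — so the sets $\m{S}_{y_0}$ partition $\m{A}$, and applying the tangent-space computation with an arbitrary basepoint $y\in\m{S}_{y_0}=\m{S}_y$ gives $T_y\m{S}_{y_0}=\m{D}_y$ throughout. The delicate point, and the reason this cannot simply be deduced from the Frobenius theorem, is twofold: we are in an infinite-dimensional (Fréchet) setting, and the ``primitive'' $\int_0^t V_s\lrcorner g_s^{*}F(y_0)\,\mrm{d}s$ depends on the chosen isotopy $\{g_t\}$, not only on its endpoint. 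This flux-type ambiguity is precisely what obstructs integrating $\tilde{\f{g}}$ to a group when $[\beta]$ is not integral; but it only affects the global structure of a leaf (which may, for instance, be an affine space modulo a countable subgroup of $H^1(X,\bb{R})$) and not its germ or its tangent distribution, which is all that integrability requires.
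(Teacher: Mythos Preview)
Your argument is correct and follows the same underlying idea as the paper --- integrate the subalgebra $\f{h}$ to the group $\operatorname{Ham}(X,\omega)$ and use it, together with the exact $1$-forms, to sweep out the leaf --- but the presentation is genuinely different. The paper gives a \emph{static} parametrisation: under the simplifying hypothesis $H^1(X)=0$ it introduces the space
\[
\m{Y}=\set*{(\Phi,\eta)\in\operatorname{Ham}(X,\omega)\times\m{A}^1(X)\tc\Phi^*F(y)-\beta=\mrm{d}\eta},
\]
defines $Q(\Phi,\eta)=\Phi^*y+\eta$, and checks that $DQ_{(\Phi,\eta)}$ lands exactly in $\m{D}_{Q(\Phi,\eta)}$. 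You instead give a \emph{dynamic} parametrisation via reachable sets of the non-autonomous flow, first solving $\dot F(y_t)=\m{L}_{V_t}F(y_t)$ by pullback along a Hamiltonian isotopy and then integrating the remaining affine equation for $y_t$. The two pictures match up: your endpoint $y_1$ is precisely $Q(g_1,\eta_1)$ for the isotopy $g_t$ and the primitive $\eta_1$ produced by your integral formula.

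What your route buys is that you do not need the assumption $H^1(X)=0$: you confront the flux ambiguity directly and observe that it affects only the global topology of the leaf, not its tangent distribution. You also make explicit that reachability is an equivalence relation, so the leaves genuinely partition $\m{A}$, a point the paper leaves implicit. What the paper's route buys is brevity: once $\m{Y}$ is written down, a single computation of $DQ$ suffices, and one avoids discussing existence for the time-dependent ODE. One small imprecision in your write-up: the identity $F(y_t)=g_t^{*}F(y_0)$ requires the ``right-generator'' convention for the isotopy (i.e.\ $\partial_s|_{s=t}(g_t^{-1}\!\circ g_s)=V_t$), since for the usual flow convention one gets $g_t^{*}\m{L}_{V_t}F(y_0)$ rather than $\m{L}_{V_t}(g_t^{*}F(y_0))$; this is harmless but worth stating.
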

\begin{proof}
For a fixed~$y\in\m{A}$, we will exhibit a parametrization of the integral leaf of~$\m{D}$ through~$y$. We work under the simplifying assumption that~$H^1(X)=0$.

Let~$\m{H}$ be the group of Hamiltonian symplectomorphisms of~$(X,\omega)$, and consider
\begin{equation*}
\m{Y}:=\set*{(\Phi,\eta)\in\m{H}\times\m{A}^1(X)\tc\Phi^*F(y)-\beta=\mrm{d}\eta}.
\end{equation*}
We claim that the integral leaf of~$\m{D}$ through~$y$ is the image of the map~$Q:\m{Y}\to\m{A}$ defined by $Q(\Phi,\eta):=\Phi^*y+\eta$. As~$\mrm{Lie}(\m{H})=\f{h}$, the tangent space of~$\m{Y}$ at~$(\Phi,\eta)$ consist of pairs~$(V,\dot{\eta})\in\f{h}\times\m{A}^1(X)$ such that~$\dot{\eta}=V\lrcorner\Phi^*F(y)+\mrm{d}f$ for some function~$f$. The differential of~$Q$ then is
\begin{equation*}
DQ_{(\Phi,\eta)}(V,\dot{\eta})=\dot{\eta}+\partial_{t=0}\left(\Phi_{V,t}^*\Phi^*y\right)=\mrm{d}f+V\lrcorner\Phi^*F(y)+\m{L}_V(\Phi^*y).
\end{equation*}
By definition~$\Phi^*F(y)=\beta+\mrm{d}\eta$, so we obtain
\begin{equation*}
DQ_{(\Phi,\eta)}(V,\dot{\eta})=\mrm{d}f+V\lrcorner\beta+\m{L}_V\eta-\mrm{d}(\eta(V))+\m{L}_V(\Phi^*y)
\end{equation*}
which is the infinitesimal action at~$Q(\Phi,\eta)$ of~$(f-\eta(V),V)\in\tilde{\f{g}}$. By choosing different~$(V,\dot{\eta})\in T_{(\Phi,\eta)}\m{H}$, we can check that the differential of~$Q$ is surjective on~$\m{D}$.
\end{proof}

\subsubsection{Complexification}

We would like to consider~\eqref{eq:dHYM_scal_momentmap} as an equation for~$\omega$ and~$\beta$ in prescribed cohomology classes, keeping the complex structure of~$X$ fixed. Following~\cite{GarciaFernandez_PHD} (after~\cite{Donaldson_SymmKahlerHam}), we can formalize this shift in perspective by considering system~\eqref{eq:dHYM_scal_momentmap} along the \emph{complexified orbit} of a point~$(y,J)\in\m{A}\times\m{J}$ under the action~$\tilde{\f{g}}$. The idea is that the~$\f{h}$-complexified orbit of~$J$ is parametrized by the K\"ahler class of~$\omega$ (see~\cite{Donaldson_SymmKahlerHam}), while the~$\f{g}$-complexified orbit of~$y\in\m{A}$ parametrizes the~$(1,1)$-class of~$F(y)$.

To complexify the action~$\tilde{\f{g}}\curvearrowright\m{A}\times\m{J}$ we should however restrict attention to the set~$\m{P}\subset\m{A}\times\m{J}$ of pairs~$(y,J)\in\m{A}\times\m{J}$ such that~$F(y)$ is of type~$(1,1)$ with respect to~$J$. There is an integrable complex structure~$\bb{J}$ on~$\m{P}$ (see~\cite[Proposition~$2.2$]{AlvarezGarciaGarcia_KYM}), given by
\begin{equation*}
\bb{J}_{y,J}(k,A):=\left(-k\circ J, JA\right).
\end{equation*}
It is easy to check that the diagonal action of~$\tilde{\f{g}}$ on~$\m{A}\times\m{J}$, i.e.
\begin{equation*}
\widehat{(f,V)}_{(y,J)}=\left(\mrm{d}f+\m{L}_Vy+V\lrcorner\beta,\m{L}_VJ\right)
\end{equation*}
preserves~$\m{P}$, and the action is holomorphic: for every~$\zeta\in\tilde{\f{g}}$,~$\m{L}_{\hat{\zeta}}\bb{J}=0$. It is then possible to define on~$\m{P}$ an infinitesimal action of~$\tilde{\f{g}}^{\bb{C}}:=\f{\tilde{g}}\otimes_{\bb{R}}\bb{C}$, extending the action of~$\tilde{\f{g}}$. For any element~$\zeta=(f_1,V_1)+\ii (f_2,V_2)\in\tilde{\f{g}}^{\bb{C}}$, with~$f_i\in\m{C}^\infty(X)$ and~$V_i\in\f{h}$, we define its action as
\begin{equation*}
\begin{split}
\hat{\zeta}_{(y,J)}=&\widehat{(f_1,V_1)}_{(y,J)}+\bb{J}_{(y,J)}\widehat{(f_2,V_2)}_{(y,J)}=\\
=&\Big(\mrm{d}(f_1+y(V_1))+\mrm{d}^c(f_2+y(V_2))+(V_1+JV_2)\lrcorner F(y),\m{L}_{V_1+JV_2}J\Big).
\end{split}
\end{equation*}
Similarly to the action~$\tilde{\f{g}}\curvearrowright\m{A}$, also the action~$\tilde{\f{g}}^{\bb{C}}\curvearrowright\m{P}$ may not be lifted to a group action. However, it is possible to integrate the distribution~$\widehat{\m{D}}:=\m{D}+\bb{J}\m{D}$. The leaves of~$\widehat{\m{D}}$ can then be considered to be the orbits for the action of~$\tilde{\f{g}}^{\bb{C}}$ on~$\m{P}$. Following the similar discussion in~\cite{AlvarezGarciaGarcia_KYM}, we will show that the moment map equation along the complexified orbit of~$(y,J)\in\m{P}$ is equivalent to the following system of equations for two functions~$\varphi,\psi\in\m{C}^\infty(X)$
\begin{equation}\label{eq:dHYM_scal_complex}
\begin{dcases}
\Imm\left(\mrm{e}^{-\ii\hat{\vartheta}}\left(\omega+\mrm{d}\mrm{d}_J^c\varphi+\ii(F(y)+\mrm{d}\mrm{d}_J^c\psi)\right)^n\right)=0\\
s(\omega+\mrm{d}\mrm{d}_J^c\varphi)-\lambda\frac{\Rea\left(\mrm{e}^{-\ii\hat{\vartheta}}\left(\omega+\mrm{d}\mrm{d}_J^c\varphi+\ii(F(y)+\mrm{d}\mrm{d}_J^c\psi)\right)^n\right)}{(\omega+\mrm{d}\mrm{d}_J^c\varphi)^n}=c.
\end{dcases}
\end{equation}
Notice that in~\eqref{eq:dHYM_scal_complex}, the complex structure~$J$ and the~$1$-form~$y$ are fixed, so that~\eqref{eq:dHYM_scal_complex} is in fact a system of equations for a K\"ahler form and a~$2$-form belonging to the fixed classes~$[\omega]$ and~$[\beta]=[F(y)]$, respectively. In other words,
\begin{prop}\label{prop:complexification_momentmap}
The moment map equation~\eqref{eq:dHYM_scal_momentmap}, along the orbit of $(y,J)\in\m{P}$ under the complexified action~$\tilde{\f{g}}^{\bb{C}}\curvearrowright\m{P}$, is equivalent to~\eqref{modelEqIntro} for $B=F(y)$.
\end{prop}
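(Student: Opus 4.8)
The plan is to run the standard argument identifying a vanishing-moment-map equation with a PDE along a complexified orbit, as in~\cite{Donaldson_SymmKahlerHam, GarciaFernandez_PHD, AlvarezGarciaGarcia_KYM}, adapted to the present setting in which the ``group'' is replaced by the integrable distribution $\widehat{\m D}=\m D+\bb J\m D$ on $\m P$. The moment map $\bm\mu:\m A\times\m J\to\tilde{\f g}^*$ is already available from Proposition~\ref{thm:GarciaFernandez} and the computations preceding it, together with its $\tilde{\f g}$-equivariance, whose $\m A$-component is the identity~\eqref{equivariance} checked above. So the remaining task is to identify the restriction of $\bm\mu=0$ to the complexified leaf through a fixed $(y,J)\in\m P$ with the system~\eqref{eq:dHYM_scal_complex}, hence with~\eqref{modelEqIntro} for $B=F(y)$.

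First I would prove that $\widehat{\m D}$ is integrable and write down an explicit parametrization of its leaf through $(y,J)$, by the same technique used in the proof of Lemma~\ref{lemma:integrabile}: under the simplifying assumption $H^1(X)=0$, one couples Donaldson's description of the $\f h$-complexified orbit of $J$ in terms of K\"ahler potentials~\cite{Donaldson_SymmKahlerHam} with the description of the $\f g^{\bb C}$-orbit of $y\in\m A$ in terms of a ``$B$-field potential''. Concretely, the leaf should be parametrized, modulo the real orbit directions, by pairs $(\varphi,\psi)\in\m C^\infty(X,\bb R)^2$ in such a way that the point with parameters $(\varphi,\psi)$ carries the K\"ahler form $\omega_\varphi=\omega+\mrm{d}\mrm{d}_J^c\varphi$ in the $\m J$-factor and the $2$-form $F(y)+\mrm{d}\mrm{d}_J^c\psi$ in the $\m A$-factor. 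The computation to check is that the ``imaginary'' part of the $\tilde{\f g}^{\bb C}$-action, namely $\bb J_{(y,J)}\,\widehat{(f,V)}_{(y,J)}$, produces exactly a first-order deformation of a K\"ahler potential in the $\m J$-direction together with a variation of the form $\mrm{d}\mrm{d}^c\dot\psi$ of the chosen representative of $[\beta]$ in the $\m A$-direction; this is where the $\beta$-twist in the bracket~\eqref{eq:bracket} and the mixing of the $\f g$- and $\f h$-factors under the splitting $\vartheta_y$ have to be tracked with care.

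Granting this parametrization, the $\tilde{\f g}$-equivariance of $\bm\mu$ makes $\bm\mu=0$ invariant along the real directions of $\widehat{\m D}$, so that, restricted to the complexified leaf, it descends to a condition depending only on $(\varphi,\psi)$. Evaluating $\bm\mu$ at the potential representative and using the splitting $\tilde{\f g}=\f g\oplus\f h$: the $\f g$-component of $\bm\mu=0$, by the formula for $\mu$ recalled in Section~\ref{momentMapSec}, is exactly $\Imm\!\big(\mrm{e}^{-\ii\hat{\vartheta}}(\omega_\varphi+\ii(F(y)+\mrm{d}\mrm{d}_J^c\psi))^n\big)=0$; and the $\f h$-component, using the formula for $\sigma$ together with the Fujiki--Donaldson fact that the moment map on $\m J$ is the scalar curvature, is the second equation of~\eqref{eq:dHYM_scal_momentmap} with $\omega$ replaced by $\omega_\varphi$ and $\beta+\mrm{d}y$ by $F(y)+\mrm{d}\mrm{d}_J^c\psi$. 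Dividing the latter equation by $\omega_\varphi^n$ gives~\eqref{eq:dHYM_scal_complex}; and since the first (dHYM) equation singles out $B=F(y)+\mrm{d}\mrm{d}_J^c\psi$ as the unique dHYM representative of $[\beta]=[B]$ relative to $\omega_\varphi$, the system is equivalent to~\eqref{dKYM}, that is, to~\eqref{modelEqIntro}.

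The hard part will be the first step. Since $\tilde{\f g}^{\bb C}$ does not integrate to a group action on $\m P$ (its bracket being twisted by $\beta$), one cannot exponentiate, and the parametrization of the complexified leaf by $(\varphi,\psi)$ must be constructed by hand, in the spirit of the map $Q$ appearing in the proof of Lemma~\ref{lemma:integrabile}, while simultaneously verifying both that the real orbit directions act trivially on the equation $\bm\mu=0$ and that the imaginary directions correspond precisely to $\mrm{d}\mrm{d}^c\varphi$ and $\mrm{d}\mrm{d}^c\psi$. Once this bookkeeping is done, the remaining identifications are formal manipulations of the explicit moment map formulas.
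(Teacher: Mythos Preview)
Your proposal is correct and takes the same approach as the paper: explicitly parametrize the complexified leaf (the paper does this via a map $Q$ built from a Moser-type diffeomorphism $\Phi$ with $\Phi^*\tilde\omega=\omega$ and $\Phi^*F(y)=\beta+\mrm d\eta$, together with a $B$-field potential $f$), check that its differential surjects onto $\bb J\m D$, and then compose with $\bm\mu$. The one step your outline leaves implicit is that ``evaluating $\bm\mu$ at the potential representative'' really means pulling the equation $\bm\mu\circ Q=0$ back by $\Phi^{-1}$, which is precisely what exchanges the fixed $\omega$ and moving complex structure $\Phi^*J$ for the moving $\tilde\omega=\omega_\varphi$ and fixed $J$ appearing in~\eqref{eq:dHYM_scal_complex}.
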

\begin{rmk}
The $1$-form $y$ plays essentially no role in the complexification of \eqref{eq:dHYM_scal_momentmap}, as the Dolbeault cohomology class of $B=F(y)$ does not depend on $y$. The moment map equations obtained for different choices of $y$ are all equivalent (as long as the complex structure is fixed). In particular, if $\beta$ itself is a $(1,1)$-form, equation \eqref{modelEqIntro} is equivalent to the moment map equation along the complexified orbit of $(0,J)\in\m{P}$ (for $\beta=B$).
\end{rmk}
\begin{proof}[Proof of Proposition \ref{prop:complexification_momentmap}]
We start by exhibiting a parametrization of~$\widehat{\m{D}}$, obtained by modifying a construction in~\cite{Donaldson_SymmKahlerHam}. As in the proof of Lemma \ref{lemma:integrabile}, we work under the simplifying assumption~$H^1(X)=0$. Let~$K(\omega)$ denote the K\"ahler class of~$\omega$ defined by~$J$, and consider the space
\begin{equation*}
\tilde{\m{Y}}=\set*{\left(\Phi,\tilde{\omega},\eta\right)\in\mrm{Diff}_0(X)\times K(\omega)\times\m{A}^1(X)\tc\Phi^*\tilde{\omega}=\omega\mbox{ and }\Phi^*F(y)-\beta=\mrm{d}\eta}.
\end{equation*}
We claim that the integral leaf of~$\widehat{\m{D}}$ through~$(y,J)\in\m{P}$ is given by the image of
\begin{equation*}
\begin{split}
Q:\m{C}^\infty(X)\times\tilde{\m{Y}}&\to\m{A}\times\m{J}\\
\left(f,\Phi,\tilde{\omega},\eta\right)&\mapsto\left(\mrm{d}^c_{\Phi^*J}f+\eta,\Phi^*J\right).
\end{split}
\end{equation*}
The definition of $\tilde{\m{Y}}$ guarantees that the image of~$Q$ lies in~$\m{P}$. To prove that the image of~$Q$ is an integral leaf of~$\widehat{D}$, we show that the differential of~$Q$ is surjective on the ``purely imaginary'' part of~$\widehat{D}$, i.e.~$\bb{J}\m{D}$. This, together with the proof of Lemma~\ref{lemma:integrabile}, is sufficient to show that the tangent space to~$\mrm{ran}(Q)$ is~$\widehat{D}$.

Let~$\omega_t:=\omega-\mrm{d}\mrm{d}^c_Jt\varphi$ be a K\"ahler form with respect to the complex structure~$J$, and consider the time-dependent vector field~$V_t=JX^{\omega_t}_\varphi$. If~$\Phi_t$ is the isotopy of~$V_t$, then it is readily checked that~$\Phi_t^*\omega_t=\omega$. Moreover, if~$\eta_t$ is a path of~$1$-forms defined by
\begin{equation*}
\eta_0=y,\quad\dot{\eta}_t=\Phi_t^*\left(V_t\lrcorner F(y)\right)
\end{equation*}
then~$(\Phi_t,\omega_t,\eta_t)\in\tilde{\m{Y}}$ for every~$t$. Let also~$\set*{f_t\tc t\in\bb{R}},\set*{g_t\tc t\in\bb{R}}\subset\m{C}^\infty(X)$ be arbitrary smooth paths of functions. We compute the differential of~$Q$ along the path
\begin{equation*}
p_t:=\left(f_t,\Phi_t,\omega_t,\eta_t+\mrm{d}g_t\right)\in\m{C}^\infty(X)\times\tilde{\m{Y}}.
\end{equation*}
It will also be convenient to set~$J_t:=\Phi^*J$ and~$X_t:=\Phi_t^*X^{\omega_t}_\varphi=X^\omega_{\varphi\circ\Phi_t}$. Then
\begin{equation*}
Q\left(p_t\right)=\left(\mrm{d}^c_{J_t}f_t+\eta_t+\mrm{d}g_t,J_t\right).
\end{equation*}
We separately compute the derivatives of the single pieces. First notice that, as every~$J_t$ is integrable, the derivative in~$t$ of~$\Phi_t^*J$ is simply~$J_t\m{L}_{X_t}J_t$. For the first component of~$Q$ instead we have
\begin{equation*}
\begin{split}
\partial_t\left(\mrm{d}^c_{J_t}f_t\right)=&\mrm{d}^c_{J_t}\dot{f}_t+\left(\mrm{d}^c_{J_t}f_t\right)\circ\m{L}_{X_t}J_t=\\
=&\mrm{d}^c_{J_t}\dot{f}_t+\mrm{d}\left(X_t(f_t)\right)+(J_tX_t)\lrcorner\mrm{d}\mrm{d}^c_{J_t}f_t+\mrm{d}^c_{J_t}\left(X_t\lrcorner\mrm{d}^c_{J_t}f_t\right)
\end{split}
\end{equation*}
and by definition
\begin{equation*}
\partial_t\left(\eta_t+\mrm{d}g_t\right)=\Phi_t^*\left(V_t\lrcorner F(y)\right)+\mrm{d}\dot{g}_t=(J_tX_t)\lrcorner(\beta+\mrm{d}\eta_t)+\mrm{d}\dot{g}_t.
\end{equation*}
Putting all together, we find that~$Q_1:=\pi_1\circ Q$ satisfies
\begin{equation*}
\begin{split}
\partial_tQ_1(p_t)=&\mrm{d}^c_{J_t}\dot{f}_t+\mrm{d}\left(X_t(f_t)\right)+\mrm{d}^c_{J_t}\left(X_t\lrcorner\mrm{d}^c_{J_t}f_t\right)
+(J_tX_t)\lrcorner F(Q_1(p_t))+\mrm{d}\dot{g}_t=\\
=&\mrm{d}\left(X_t(f_t)\right)+\mrm{d}\dot{g}_t+
\mrm{d}^c_{J_t}\left(\dot{f}_t-\eta_t(X_t)-X_t(g_t)\right)+\\
&+\mrm{d}^c_{J_t}\left(X_t\lrcorner Q_1(p_t)\right)+(J_tX_t)\lrcorner F(Q_1(p_t)).
\end{split}
\end{equation*}
If we choose~$g_t$ so that~$X_t(f_t)+\dot{g}_t$ is constant, then we see that
\begin{equation*}
\partial_tQ(p_t)=\left(\mrm{d}^c_{J_t}\left(\dot{f}_t-X_t\lrcorner(\eta_t+\mrm{d}g_t)\right)+\mrm{d}^c_{J_t}\left(X_t\lrcorner Q_1(p_t)\right)
+(J_tX_t)\lrcorner F(Q_1(p_t)),J_t\m{L}_{X_t}J_t\right)
\end{equation*}
which is the infinitesimal action of~$\ii\!\left(\dot{f}_t-X_t\lrcorner(\eta_t+\mrm{d}g_t),X_t\right)\in\ii\tilde{\f{g}}$ at the point~$Q(p_t)$. Choosing different functions~$f_t$ and~$\varphi$, this construction can be used to produce paths that cover all the distribution~$\bb{J}\m{D}$.

To conclude the proof, we compose the moment map~$\bm{\mu}:\m{A}\times\m{J}\to\tilde{\f{g}}^*$ with~$Q$. Fix~$(\Phi,\tilde{\omega},\eta)\in\tilde{\m{Y}}$ and~$f\in\m{C}^\infty(X)$, and consider the equation~$\bm{\mu}\circ Q(f,\Phi,\tilde{\omega},\eta)=0$. Under the decomposition of~$\tilde{\f{g}}$ as~$\f{g}\oplus\f{h}$, it is equivalent to
\begin{equation*}
s(\omega,\Phi^*J) + \gamma \frac{\left(\omega+\ii F(\mrm{d}^c_{\Phi^*J}f+\eta)\right)^n}{\omega^n} = c.
\end{equation*}
By definition of~$\eta$ and~$\Phi$ however, we can rewrite this as
\begin{equation}\label{eq:complexified_momentmap_proof}
s(\Phi^*\tilde{\omega},\Phi^*J) + \gamma \frac{\left(\Phi^*\tilde{\omega}+\ii\left(\Phi^*F(y)+\mrm{d}\mrm{d}^c_{\Phi^*J}f\right)\right)^n}{\Phi^*\tilde{\omega}^n} = c.
\end{equation}
As~$s(\Phi^*\tilde{\omega},\Phi^*J)=\Phi^*s(\tilde{\omega},J)$ and~$c$ is a constant,~\eqref{eq:complexified_momentmap_proof} is equivalent to
\begin{equation*}
s(\tilde{\omega},J) + \gamma \frac{\left(\tilde{\omega}+\ii\left(F(y)+\mrm{d}\mrm{d}^cf\right)\right)^n}{\tilde{\omega}^n} = c
\end{equation*}
which is just~\eqref{eq:dHYM_scal_complex}, as~$\tilde{\omega}=\omega+\mrm{d}\mrm{d}^c\varphi$ for some~$\varphi\in\m{C}(X)$.
\end{proof}

\subsection{Futaki invariant}\label{FutakiSec}

The moment map interpretation of equation \eqref{eq:dHYM_scal_complex} gives a natural description of a character of the Lie algebra of the stabilizer of a point $(y,J)\in\m{P}$. This generalizes the analogue of the classical Futaki character, introduced in~\cite[Section~$2.6$]{SchlitzerStoppa} in the case when~$[B]= c_1(L)$ for some line bundle~$L\to X$, to the case of general complexified K\"ahler classes.

The (Lie algebra) stabilizer $\tilde{\f{g}}_{(y,J)}$ of $(y,J)\in\m{P}$ under the action of $\tilde{\f{g}}$ is given by pairs $(f,V)$ of a real function and a Hamiltonian vector field such that
\begin{equation*}
\m{L}_VJ=0\mbox{ and }\mrm{d}\left(f+y(V)\right)+V\lrcorner F(y)=0.
\end{equation*}
In other words, $V$ is be a real holomorphic vector field with a purely imaginary holomorphy potential, and it also has a potential with respect to $F(y)$. The stabilizer $\tilde{\f{g}}^{\bb{C}}_{(y,J)}$ for the \emph{complexified} action (which in general properly contains $\tilde{\f{g}}_{(y,J)}\otimes_{\bb{R}}\bb{C}$) instead is
\begin{equation*}
\tilde{\f{g}}_{(y,J)}^{\bb{C}}=\set*{(f,V)\in\m{C}^\infty(X,\bb{C})\times\f{h}_0\tc V\lrcorner F(y)=-\bdiff(f+y(V))}.
\end{equation*}
Notice that for each $V\in\f{h}_0$, there exists $f\in\m{C}^\infty(X,\bb{C})$ such that $(f,V)\in\tilde{\f{g}}^{\bb{C}}_{(y,J)}$, and it is determined uniquely up to addition of constants.

Let $B=F(y)$, and for any holomorphic vector field $V\in\f{h}_0$ let $\varphi(V,\omega)$ and $\varphi(V,B)$ be complex functions defined by the conditions
\begin{equation*}
\begin{split}
V\lrcorner\omega=\bdiff\varphi(V,\omega),\quad V\lrcorner B=\bdiff\varphi(V,B),\\
\int_X\varphi(V,\omega)\omega^n=\int_X\varphi(V,B)\omega^n=0.
\end{split}
\end{equation*}
Then, the functional $\m{F}_{[\omega],[B]}:\f{h}_0\to\bb{C}$ defined as
\begin{equation}\label{eq:Futaki_character_Bfield}
\m{F}_{[\omega^{\C}]}(V)=\int_X\varphi(V,\omega)\Rea(\gamma(\omega+\ii B)^n)-\varphi(V,B)\Imm(\gamma(\omega+\ii B)^n)-\varphi(V,\omega)s(\omega)\omega^n
\end{equation}
does not depend on the choices of $\omega$ and $B$ in the respective classes.  On the one hand, this is a consequence of our moment map picture, at the level of Lie algebras, Theorem~\ref{momentMapThm}. Alternatively, just as for the classical Futaki character~$\m{F}_{[\omega]} = \m{F}_{[\omega^{\C}]}|_{|\gamma|=0}$, the moment map picture is not actually required to show that~$\m{F}_{[\omega^{\C}]}$ is independent of the choice of representative for~$[\omega^{\C}]$: in fact, a slight variant of Futaki's original argument in~\cite{Futaki_obstruction}, differentiating $\m{F}_{[\omega^{\C}]}$ along a path between different representatives for $[\omega^{\C}]$, is enough to show this.   

Assume now that for any K\"ahler metric~$\omega$ there is a (unique) $B$-field $B(\omega)$ (in the prescribed class) that solves the dHYM equation. In this case, the character \eqref{eq:Futaki_character_Bfield} simplifies to the functional $\m{F}_{[\omega^{\bb{C}}]}$ of Definition \ref{def:Futaki_dHYM=0}.

\addcontentsline{toc}{section}{References}
\bibliographystyle{abbrv}
\bibliography{complexified_classes}
\vskip.5cm
CIRGET, Universit\'e du Qu\'ebec \`a Montr\'eal, Case postale 8888, Succursale centre-ville
Montr\'eal (Qu\'ebec) H3C 3P8\\
scarpa.carlo@uqam.ca\\
 
\noindent SISSA, via Bonomea 265, 34136 Trieste, Italy\\
Institute for Geometry and Physics (IGAP), via Beirut 2, 34151 Trieste, Italy\\
jstoppa@sissa.it    
\end{document}